\documentclass[11pt,a4paper,leqno]{amsart} 

%%%%%%% Add this in case one needs affiliations on  the first page.
% \usepackage[foot]{amsaddr}

% \makeatletter
% \renewcommand{\email}[2][]{%
%   \ifx\emails\@empty\relax\else{\g@addto@macro\emails{,\space}}\fi%
%   \@ifnotempty{#1}{\g@addto@macro\emails{\textrm{(#1)}\space}}%
%   \g@addto@macro\emails{#2}%
% }
% \makeatother
\usepackage{common} % previous stuff in separate file

\title{Zygmund dilations: bilinear analysis and commutator estimates}
 \author{Emil Airta}
 \author{Kangwei Li}
 \author{Henri Martikainen}
 
 \address[E.A.]{Department of Mathematics and Statistics, University of Jyv\"askyl\"a, P.O. Box 35 (MaD),
 FI-40014 University of Jyv\"akyl\"a, Finland}
 \email{emil.airta@gmail.com}
 \address[K.L.]{Center for Applied Mathematics, Tianjin University, Weijin Road 92, 300072 Tianjin, China}
 \email{kli@tju.edu.cn}
 
 \address[H.M.]{Department of Mathematics and Statistics, Washington University in St. Louis, 1 Brookings Drive, St. Louis, MO 63130, USA}
 \email{henri@wustl.edu}
 \thanks{K. L. was supported by National Key R\&D Program of China (No. 2021YFA1002500), and the National Natural Science Foundation of China through project numbers 12222114 and 12001400.}
 
 \thanks{E.A. was supported by Academy of Finland through Grant No. 321896 ``Incidences on Fractals'' (PI = Orponen) and No. 314829 ``Frontiers of singular integrals'' (PI = Hyt\"onen).}
 \thanks{H.M. was  supported by the National Science Foundation under Grant No. 2247234 (PI = H. Martikainen). H.M. was, in addition,
supported by the Simons Foundation through MPS-TSM-00002361 (travel support for
mathematicians).}

\makeatletter
\@namedef{subjclassname@2010}{%
  \textup{2010} Mathematics Subject Classification}
\makeatother

\subjclass[2010]{42B20}
\keywords{singular integrals, multi-parameter analysis, Zygmund dilations, multiresolution analysis, weighted estimates}

\thispagestyle{empty}
\begin{document}

\allowdisplaybreaks

\begin{abstract}
  We develop both bilinear theory and commutator estimates
  in the context of entangled dilations, specifically Zygmund dilations $(x_1, x_2, x_3) \mapsto (\delta_1 x_1, \delta_2 x_2, \delta_1 \delta_2 x_3)$ in $\R^3$.
  We construct bilinear versions of recent dyadic multiresolution methods for Zygmund dilations and apply them
  to prove a paraproduct free $T1$ theorem for bilinear singular integrals invariant under Zygmund dilations.
  Independently,
  we prove linear commutator estimates even when the underlying singular integrals do not satisfy 
  weighted estimates with Zygmund weights. This requires new paraproduct estimates.
\end{abstract}

\maketitle
%\tableofcontents

\section{Introduction}
``Entangled'' systems of dilations, see Nagel-Wainger \cite{NW}, in the $m$-parameter product space $\R^d = \prod_{i=1}^m \R^{d_i}$ have the general form
\begin{equation*}\label{eq:NWdil}
  (x_1, \ldots, x_m) \mapsto (\delta_1^{\lambda_{11}}\cdots\delta_k^{\lambda_{1k}} x_1, \ldots, \delta_1^{\lambda_{m1}}\cdots\delta_k^{\lambda_{mk}} x_m), \qquad \delta_1, \ldots, \delta_k > 0,
\end{equation*}
and appear naturally throughout analysis.
For instance, in $\R^3$ the {\em Zygmund dilations}
$(x_1, x_2, x_3) \mapsto (\delta_1 x_1, \delta_2 x_2, \delta_1 \delta_2 x_3)$
are compatible with the group law of the Heisenberg group, see e.g. M\"uller--Ricci--Stein \cite{MRS}.
Even these simplest entangled dilations are not completely understood, especially 
when it comes to the associated Calder\'on--Zygmund type singular integral operators (SIOs).

Until recently, multiresolution methods were still missing in the Zygmund dilations setting, as pointed out in \cite{DLOPW-ZYGMUND}.
This was a big restriction on how to go about developing singular integral theory.
However,
the last two authors together with
T. Hyt\"onen
%K. Li, H. Martikainen
and E. Vuorinen recently developed this missing Zygmund multiresolution analysis
in \textbf{\cite{HLMV-ZYG}}. Such dyadic representation theorems and related multiresolution techniques had been highly influential
in recent advances on SIOs and their applications (see e.g. \cites{HPW, Hy, MA_ADV, NTV}),
but developing them in the entangled situation required new ideas.
These tools then yielded very delicate weighted norm inequalities $L^p(w) \to L^p(w)$ for general non-convolution form Zygmund singular integrals in the optimal generality of Zygmund weights (introduced by Fefferman--Pipher \cite{FEPI}) 
\begin{equation*}\label{eq:zygweights}
    [w]_{A_{p,Z}} :=\sup_{I\in\mathcal{R}_Z}\Big(\frac{1}{|I|}\int_I w(x)\ud x\Big)\Big(\frac{1}{|I|} \int_I w^{-1/(p-1)}(x) \ud x\Big)^{p-1}<\infty, \qquad 1 < p < \infty,
\end{equation*}
where the supremum is over {\em Zygmund rectangles} $I = I_1\times I_2\times I_3$, $\ell(I_3) = \ell(I_1)\ell(I_2)$.

In fact, there is a precise threshold: if the kernel decay in terms of the deviation of $z\in\R^3$ from the ``Zygmund manifold'' $\abs{z_1z_2}=\abs{z_3}$
is not fast enough, singular integrals invariant under Zygmund dilations fail to be bounded with Zygmund weights. We constructed counterexamples
and showed the delicate positive result in the optimal range using the new multiresolution analysis. 
Previous results include \cites{RS,FEPI,DLOPW-ZYGMUND,HLLT}.

This rather striking threshold for weighted estimates means that
it is, in particular, unclear in what generality natural estimates for commutators
$[b, T] = bT - T(b\,\cdot\,)$ hold. 
Of course, ever since the classical one-parameter result of Coifman--Rochberg--Weiss \cite{CRW}, stating that
$\|[b,T]\|_{L^p \to L^p} \sim \|b\|_{\BMO}$, commutator estimates have been a large and fundamental part of the theory of SIOs and their applications.
Commutator estimates in the Zygmund dilation setting were previously considered in 
\cite{DLOPW-ZYGMUND} using the so-called Cauchy integral trick.
That method requires weighted bounds with Zygmund weights -- this is because 
it uses the fact that natural Zygmund adapted $\BMO$ functions generate Zygmund weights.
But we now know \cite{HLMV-ZYG}
that such weighted bounds are quite delicate -- and it turns out that the commutator bounds are 
true even in the regime where weighted estimates fail. We prove the following.
\begin{thm}\label{thm:main1}
  Let $b \in L^1_{loc}$ and $T$ be a linear paraproduct free Calder\'on-Zygmund operator adapted to Zygmund dilations
  as in \cite{HLMV-ZYG}. Let $\theta \in (0,1]$ be the kernel exponent measuring the decay in terms of the Zygmund ratio 
  \[D_\theta (x) := \Bigg(\frac{|x_1x_2|}{|x_3|} + \frac{|x_3|}{|x_1x_2|} \Bigg)^{-\theta}.\] Then for all such $\theta$ we have
  \[\|[b,T]\|_{L^p \to L^p} \lesssim \|b\|_{\bmo_Z}, \qquad 1 < p < \infty. \]
\end{thm}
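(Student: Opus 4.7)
The plan is to bypass the Cauchy integral trick -- which in the classical case converts commutators into weighted bounds -- and instead argue directly on the dyadic side, since in the regime of small $\theta$ the Zygmund-weighted estimates for $T$ that such a trick would require are known to fail. The starting point is the Zygmund dyadic representation theorem from \cite{HLMV-ZYG}: because $T$ is assumed paraproduct free, it can be realized as a probabilistic average, over random Zygmund-adapted dyadic systems, of Zygmund dyadic shifts $S_{\bar{k}}$ whose contributions carry an exponential decay in the complexity $\bar{k}$ coming from the kernel parameter $\theta$. The theorem therefore reduces to a uniform bound
\[
\|[b,S_{\bar{k}}]\|_{L^p\to L^p}\lesssim P(\bar{k})\,\|b\|_{\bmo_Z},
\]
with $P$ at most polynomial, so that summation against the exponential factor converges.

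To handle $[b,S_{\bar{k}}]$ I would expand $b$ using a three-parameter paraproduct decomposition in the Zygmund Haar system, splitting $b\cdot f$ and $S_{\bar{k}}(b\cdot f)$ according to the relative position of the Haar coefficients of $b$ and those of $f$ over Zygmund rectangles $R=R_1\times R_2\times R_3$ with $\ell(R_3)=\ell(R_1)\ell(R_2)$. Rearranging, $[b,S_{\bar{k}}]$ decomposes into a finite sum of hybrid objects: iterated dyadic shifts with one of their inner Haar projections replaced by a paraproduct-type insertion carrying the coefficients of $b$. The paraproduct-free hypothesis on $T$ removes the direct paraproduct contributions appearing in the representation of $T$, but the commutator unavoidably regenerates paraproduct-like pieces at intermediate scales, and these operators must be controlled by hand.

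The main obstacle is proving $L^p$ boundedness of these hybrid Zygmund paraproducts with $\bmo_Z$ control on the symbol. In the fully entangled setting the standard product-space approach of freezing two variables and reducing to a one-parameter paraproduct bound is not available, because the scale of $R_3$ is tied to the product of the scales of $R_1$ and $R_2$, so the relevant covering family is precisely $\mathcal{R}_Z$ rather than a tensor product lattice. My plan is to exploit the Zygmund square function and the $\bmo_Z$-type John--Nirenberg inequality developed in \cite{HLMV-ZYG}, combined with a Journ\'e-style argument tailored to Zygmund rectangles, to control the BMO block of $b$ while the remaining blocks are treated by vector-valued Fefferman--Stein estimates for Zygmund maximal and square functions. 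Obtaining polynomial dependence in $\bar{k}$ is the delicate accounting, since each paraproduct insertion introduces new scale parameters that must be reabsorbed using the decay of the Haar coefficients of the shift.

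Once each paraproduct family is bounded with a polynomial-in-complexity constant, summing against the exponential decay from the representation closes the argument. Crucially, at no point is a Zygmund-weighted estimate for $T$ invoked, so the proof is valid in the entire range $\theta\in(0,1]$, including the regime where weighted bounds fail; all the weight-type information needed has been absorbed into the $\bmo_Z$ control on $b$ and into the new paraproduct estimates proved by hand.
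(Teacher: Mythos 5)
Your high-level plan matches the paper's: use the Zygmund dyadic representation of \cite{HLMV-ZYG} to reduce to commutators $[b,Q_k]$ of Zygmund shifts, expand the multiplication $b\cdot f$ via a Zygmund-adapted Haar/martingale paraproduct decomposition, bound the resulting paraproduct pieces with at most polynomial dependence on the complexity $k$, and sum against the exponential decay $\varphi(k)$ coming from the kernel --- thereby never invoking Zygmund-weighted bounds for $T$ itself. That part is correct and is exactly what the paper does.

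However, there is a genuine gap. You write that ``once each paraproduct family is bounded with a polynomial-in-complexity constant, summing against the exponential decay closes the argument,'' implying that every piece of the expansion of $b\cdot f$ is individually $L^p$-bounded with $\bmo_Z$ control. This is false for the ``diagonal'' piece $a_{3,3}(b,f)=\sum_{I\in\calD_Z} E_{I^1}E_{I^{2,3}}b\,\Delta_{I,Z} f$ (and its adjoint), which is just localized multiplication by $b$ and cannot be controlled by $\|b\|_{\bmo_Z}$ alone. The paper's essential step is precisely that this term is \emph{not} bounded in isolation: when the martingale expansion is inserted into \emph{both} $Q_k(bf)$ and $bQ_kf$, the two $a_{3,3}$-type contributions pair up and produce a difference $\langle b\rangle_L - \langle b\rangle_Q$ with $L,Q$ nested under a common ``Zygmund ancestor'' $R\supset K,L,Q$; this difference is then telescoped through a chain of roughly $\max(k^i)$ Zygmund rectangles, each step estimated by $\|b\|_{\bmo_Z}$ via the one-parameter characterizations in Lemma~\ref{lem:equiv}, which is exactly where the polynomial complexity factor arises. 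Without this cancellation-and-telescoping step your plan does not close.

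Separately, the specific tools you propose for the remaining pieces (Journ\'e-style estimates, a $\bmo_Z$ John--Nirenberg inequality) differ from what the paper actually uses. The paper splits the nine paraproducts into product-BMO type ($i,j\neq 3$) and little-BMO type ($i=3$ xor $j=3$) cases; the ``twisted'' product-type cases $a_{1,2},a_{2,1}$ are the hardest, and they are handled by a sparse-domination estimate combined with a new short proof of Rubio de Francia $A_\infty$-extrapolation (Theorem~\ref{thm:ainftyextra}) for the Zygmund weight class. Your proposed route is vaguer and, as stated, does not indicate how to decouple the $f$- and $g$-square-function pieces without breaking the Zygmund scale constraint $\ell(I^3)=\ell(I^1)\ell(I^2)$ --- the exact obstruction that the paper's sparse/$A_\infty$ argument was designed to overcome.
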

As weighted estimates only hold with $\theta =1$,
this requires a proof
based on the multiresolution decomposition \cite{HLMV-ZYG} and a new family of ``Zygmund paraproducts''. 
Studying paraproducts is also interesting from the technical viewpoint that, generally,
proofs of $T1$ theorems display a structural decomposition of SIOs into their cancellative parts and paraproducts.
The new Zygmund theory in \cite{HLMV-ZYG} is designed for the fully cancellative case leaving out paraproducts and $\BMO$ considerations,
so this is the first paper, as far as we know, where paraproducts are considered in the Zygmund situation. 
They are tricky objects in the entangled situation.
However, while this is also a step
forward towards a full $T1$ theorem in the Zygmund setting,
the commutator theory that we develop does not require so-called partial paraproducts, and so the paraproduct
tools developed here are not yet sufficient to prove a $T1$ theorem in the non-cancellative case. We also mention that during our proof we include some results of independent interest, mainly, a new, extremely short proof
of the $A_{\infty}$ extrapolation theorem \cite{CUMP}.

Moving to a different direction, we push the Zygmund multiresolution methods \cite{HLMV-ZYG} to the multilinear setting and study 
bilinear SIOs invariant under Zygmund dilations.
A classical model of an $n$-linear SIO $T$ in $\R^d$ is obtained by setting
\begin{equation*}\label{eq:multilinHEUR}
T(f_1,\ldots, f_n)(x) = U(f_1 \otimes \cdots \otimes f_n)(x,\ldots,x), \qquad x \in \R^d,\, f_i \colon \R^d \to \C,
\end{equation*}
where $U$ is a linear SIO in $\R^{nd}$. See e.g. Grafakos--Torres \cite{GT} for the basic theory.
Estimates for classical multilinear SIOs play a fundamental role in pure and applied analysis -- 
for example, $L^p$ estimates for the homogeneous fractional derivative
$D^{\alpha} f=\mathcal F^{-1}(|\xi|^{\alpha} \widehat f(\xi))$
of a product of two or more functions, the fractional Leibniz rules,
are used in the area of dispersive equations, see e.g. Kato--Ponce \cite{KP} and Grafakos--Oh \cite{GO}.
We do not otherwise attempt to summarize the massive body of literature here and simply mention that the closest
existing result is perhaps \cite{LMV1}, which develops multiresolution methods in the non-entangled
multilinear bi-parameter case.

In this paper we prove the following ``paraproduct free'' $T1$ theorem for bilinear Zygmund SIOs.
\begin{thm}\label{thm:bilinearczz}
  Let $T$ be a bilinear paraproduct free Calder\'on-Zygmund operator adapted to Zygmund dilations as in Definition \ref{defn:czz}.
  Let $1 < p_1,p_2< \infty$ and $\frac{1}{2} < p < \infty$ with $\frac{1}{p} := \frac{1}{p_1} + \frac{1}{p_2}.$ Then we have
  $$
  \|T(f_1,f_2)\|_{L^p} \lesssim \|f_1\|_{L^{p_1}} \|f_2\|_{L^{p_2}}.
  $$
\end{thm}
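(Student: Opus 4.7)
The plan is to prove the theorem by the standard representation-and-shifts strategy, adapted to the entangled Zygmund setting and pushed into the bilinear framework. The first step is a bilinear Zygmund dyadic representation theorem extending the linear construction of \cite{HLMV-ZYG}. Working with three independent random dyadic grids $\mathcal D^1, \mathcal D^2, \mathcal D^3$ (one per coordinate direction), I would expand $\langle T(f_1, f_2), f_3\rangle$ into a triple Haar series, carefully reorganize the sum by relative sizes of the involved cubes, and show that the expectation equals an average of the form
\[
\mathbb{E} \sum_{\mathbf{k}, \mathbf{u}} 2^{-\alpha(|\mathbf{k}|+|\mathbf{u}|)} \langle S^{\mathbf{k},\mathbf{u}}(f_1,f_2), f_3\rangle,
\]
where each $S^{\mathbf{k},\mathbf{u}}$ is a bilinear Zygmund dyadic shift of finite complexity and the gain $\alpha > 0$ comes from the kernel regularity encoded by $D_\theta$. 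The paraproduct-free hypothesis prevents the appearance of terms with too many non-cancellative Haar factors, so that only genuine shifts arise and no bilinear Zygmund paraproducts need to be controlled.

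The second step is to bound each bilinear Zygmund shift uniformly in complexity up to a polynomial loss. Each $S$ has the form
\[
S(f_1, f_2) = \sum_{K \in \mathcal R_Z} \sum_{I^{(1)}, I^{(2)}, I^{(3)} \subset K} a_{I^{(1)}, I^{(2)}, I^{(3)}, K} \, \langle f_1, \tilde h_{I^{(1)}}\rangle \langle f_2, \tilde h_{I^{(2)}}\rangle \tilde h_{I^{(3)}},
\]
where each inner Zygmund rectangle $I^{(j)}$ sits below $K$ at prescribed Zygmund scales, the $\tilde h$ are Zygmund Haar-type functions of the various types ($h^1, h^0, h^{01}, \ldots$) from \cite{HLMV-ZYG}, and the coefficients satisfy the natural normalization. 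To estimate $\|S(f_1, f_2)\|_{L^p}$, I would dualize against $f_3 \in L^{p'}$ when $p \geq 1$, average over the children of each $K$, and reduce the trilinear form to an expression of the shape
\[
\int_{\R^3} \widetilde S_Z^{1} f_1 \cdot \widetilde S_Z^{2} f_2 \cdot \widetilde S_Z^{3} f_3 \, \ud x,
\]
with $\widetilde S_Z^j$ suitable Zygmund square functions. Trilinear Hölder plus the Zygmund square-function and maximal-function estimates of \cite{HLMV-ZYG} then finish the case $p \geq 1$. To reach the full range $1/2 < p < \infty$, including $p < 1$, I would either prove a weighted version with $A_{\infty,Z}$ weights and invoke the $A_\infty$ extrapolation result mentioned in the introduction, or install a stopping-time argument on Zygmund rectangles that yields a sparse bilinear form adapted to $\mathcal R_Z$.

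The main obstacle will be the second step: controlling the mixed Haar products despite the entanglement $\ell(K_3) = \ell(K_1)\ell(K_2)$. In bi-parameter product theory one can iterate one-parameter Cauchy--Schwarz arguments independently in each coordinate; here the third-coordinate summation is tied to the first two by the Zygmund constraint, so decoupling the three variables inside the trilinear form without losing the geometric gain requires the full Zygmund multiresolution toolbox -- Haar functions cancellative in the correct directions, Zygmund martingale square functions, and a delicate bookkeeping of which pairs of variables can be paired off by Cauchy--Schwarz while preserving the Zygmund rectangle structure. The paraproduct-free assumption is exactly what ensures that every Haar function on $f_1$ and $f_2$ is sufficiently cancellative to absorb the entanglement through square-function estimates, which is why the argument is expected to go through at this level even though a full $T1$ theorem would additionally require partial paraproducts.
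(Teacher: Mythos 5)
Your first step — building a bilinear Zygmund representation theorem from the multiresolution machinery of \cite{HLMV-ZYG}, using independent random grids and the paraproduct-free hypothesis to eliminate non-cancellative terms — matches the paper's Sections \ref{sec:mulrelanalysis}--\ref{sec:coefficientesti} essentially verbatim, including the observation that the decay $\varphi(k)$ comes from the $D_\theta$ factor. The divergence, and the place where your proposal has a genuine gap, is the second step. You propose to bound each bilinear Zygmund shift directly by a trilinear Hölder argument against Zygmund square functions, or, for $p<1$, by either extrapolating from an $A_{\infty,Z}$-weighted bound or a sparse form adapted to $\mathcal R_Z$. The paper explicitly found this \emph{not} to work: ``the only way we found to achieve [the unweighted estimates] was with using this additional decomposition and even some sparse domination tools.'' The entanglement constraint $\ell(K^3)=\ell(K^1)\ell(K^2)$ prevents you from pairing off variables by Cauchy--Schwarz while keeping the gain — you correctly identify this obstacle, but you do not supply a mechanism to circumvent it, and a direct trilinear square-function argument is precisely what one would first try and what fails here.

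The paper's actual workaround is Section \ref{sec:structural}: each bilinear Zygmund shift $Q_k$ is further decomposed (Proposition \ref{prop:sumofshifts}) into $O(k^1 k^{2,3})$ ``tri-parameter bilinear shifts of Zygmund nature'' via iterated telescoping of the martingale blocks, generalizing the bi-parameter argument of \cite{AMV}. These relaxed operators no longer have all inner rectangles Zygmund, but carry enough residual Zygmund structure to be handled by a one-slice sparse domination in $\calD^{2,3}_\lambda$ (Proposition \ref{prop:sparse}), then assembled into a weighted bound with \emph{tri-parameter} $A_4(\R\times\R\times\R)$ weights (Proposition \ref{prop:model}, giving a controllable $2^{k^1\eta}$ loss), and finally extended to the full bilinear quasi-Banach range by the standard multilinear extrapolation of Grafakos--Martell/Duoandikoetxea — not by the $A_\infty$ Zygmund extrapolation, which is used only for the commutator paraproducts in Section \ref{sec:commutator}. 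Your first alternative (Zygmund $A_{\infty,Z}$ extrapolation) would not even be formulated correctly for off-diagonal bilinear exponents, and the paper itself notes that its splitting is not compatible with Zygmund weights; your second alternative (a sparse form directly on $\mathcal R_Z$) is not what the paper does and there is no evidence such a sparse bound holds before the structural decomposition. In short: right strategy, but the decisive lemma of the argument is missing from your proposal.
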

Notice that we can conclude the full bilinear range, including the quasi-Banach range, just from the
paraproduct free $T1$ type assumptions. Also relevant is the fact that e.g. the appearing weak boundedness 
condition only involves Zygmund rectangles -- that is, the $T1$ assumptions of Definition \ref{defn:czz} are Zygmund adapted
and in this respect weaker than the corresponding tri-parameter assumptions.

It would also be very interesting to develop 
weighted theory with suitable kernel assumptions like in the linear case \cite{HLMV-ZYG}. That is, 
to generalize our recent paper \cite{LMV-GEN} from the standard multi-parameter setting 
to this entangled Zygmund setting. Recall that 
it would be key to deal with ``genuine'' multilinear weights, i.e., only impose a \emph{joint} $A_p$ condition
on the associated tuple of weights $\vec w = (w_1, \ldots, w_n)$. 
While such multilinear weighted estimates had been known for one-parameter SIOs for over 10 years
by the influential paper \cite{LOPTT}, the multi-parameter version was only recently solved in \cite{LMV-GEN}.
The entangled situation is very difficult, though, and we do not achieve such estimates in this paper.
Indeed, we are splitting our operators in a way that is sufficient for the unbounded estimates, but
not for the weighted estimates. In fact, already the unweighted estimates are surprisingly delicate
and the only way we found to achieve them was with using this additional decomposition and even some
sparse domination tools.

Here is an outline of the paper. In Section \ref{sec:mulrelanalysis} we develop the
fundamental Zygmund adapted multiresolution methods in the bilinear setting.
Section \ref{sec:ZSIO} introduces the singular integrals and the corresponding
testing conditions, and Section \ref{sec:coefficientesti} uses the kernel estimates 
to bound the various coefficients arising in the multiresolution analysis.
Section \ref{sec:structural} contains a further decomposition of our dyadic model 
operators -- this is then required in Section \ref{sec:boundedness}, where the $L^p$ estimates
of these model operators are proved. Section \ref{sec:boundedness} concludes with the proof of
Theorem \ref{thm:bilinearczz}. Section \ref{sec:commutator} contains the proof of the linear commutator estimates,
Theorem \ref{thm:main1}, and the corresponding theory of product and little $\BMO$ commutators 
in the Zygmund setting. Appendix \ref{app:multipliers}
considers bilinear variants of the multipliers studied by Fefferman-Pipher \cite{FEPI} --
this is motivation for the abstract definitions of Section \ref{sec:ZSIO}.

\section{Bilinear Zygmund multiresolution analysis}\label{sec:mulrelanalysis}
\subsection{Dyadic intervals, Zygmund rectangles and basic randomization}
Given a dyadic grid $\calD$, $I \in \calD$ and $k \in \Z$, $k \ge 0$, we use the following notation:
\begin{enumerate}
\item $\ell(I)$ is the side length of $I$.
\item $I^{(k)} \in \calD$ is the $k$th parent of $I$, i.e., $I \subset I^{(k)}$ and $\ell(I^{(k)}) = 2^k \ell(I)$.
\item $\ch(I)$ is the collection of the children of $I$, i.e., $\ch(I) = \{J \in \calD \colon J^{(1)} = I\}$.
\item $E_I f=\langle f \rangle_I 1_I$ is the averaging operator, where $\langle f \rangle_I = \fint_{I} f = \frac{1}{|I|} \int _I f$.
\item $\Delta_I f$ is the martingale difference $\Delta_I f= \sum_{J \in \ch (I)} E_{J} f - E_{I} f$.
\item $\Delta_{I,k} f$ or $\Delta_{I}^k f$ is the martingale difference block
$$
\Delta_{I,k} f= \Delta_I^k f = \sum_{\substack{J \in \calD \\ J^{(k)}=I}} \Delta_{J} f.
$$
\end{enumerate}

We will have use for randomization soon. While often the grids are fixed and we suppress the dependence on the random parameters,
it will be important to understand the definitions underneath. So we go ahead and introduce the related notation and standard results now.
Let $\calD_0$ be the standard dyadic grid in $\R$. For $\omega \in \{0,1\}^{\Z}$, $\omega = (\omega_i)_{i \in \Z}$, 
we define the shifted lattice
$$
\calD(\omega) := \Big\{L + \omega := L + \sum_{i\colon 2^{-i} < \ell(L)} 2^{-i}\omega_i \colon L \in \calD_0\Big\}.
$$
Let $\bbP_{\omega}$ be the product probability measure on $\{0,1\}^{\Z}$. 
We recall the following notion of a good interval from \cite{GH}.
We say that $G \in \calD(\omega, k)$, $k \ge 2$,
if $G \in \calD(\omega)$ and
\begin{equation}\label{eq:DefkGood}
d(G, \partial G^{(k)}) \ge \frac{\ell(G^{(k)})}{4} = 2^{k-2} \ell(G).
\end{equation}
Notice that for all $L \in \calD_0$ and $k \ge 2$ we have
\begin{equation}\label{eq:gprob}
  \bbP_{\omega}( \{ \omega\colon L + \omega \in \calD(\omega, k) \})  = \frac{1}{2}.
\end{equation}
The key implication (of practical use later) of $G \in \calD(\omega, k)$ is that for $n \in \Z$ with $|n| \le 2^{k - 2}$ we have
\begin{equation}\label{eq:kparent}
(G \dotplus n)^{(k)} = G^{(k)}, \qquad G \dotplus n := G + n\ell(G).
\end{equation}

In fact, we will not need much more of randomization -- it only remains to move the notation 
to our actual setting of $\R^3 = \R \times \R^2$. We define for
$$
\sigma = (\sigma^1, \sigma^2, \sigma^3) \in \{0,1\}^{\Z} \times \{0,1\}^{\Z} \times \{0,1\}^{\Z}
$$
that
\begin{align*}
  \calD(\sigma) &:= \calD(\sigma^1) \times \calD(\sigma^2) \times \calD(\sigma^3).
\end{align*}
Let $$
\bbP_{\sigma} := \bbP_{\sigma^1} \times \bbP_{\sigma^2} \times \bbP_{\sigma^3}.
$$
For $k = (k^1, k^2, k^3)$, $k^1, k^2, k^3 \ge 2$, we define
$$
\calD(\sigma, k) = \calD(\sigma^1, k^1) \times \calD(\sigma^2, k^2) \times \calD(\sigma^3, k^3).
$$
We also e.g. write
$$
\calD(\sigma, (k^1, 0, k^3)) = \calD(\sigma^1, k^1) \times \calD(\sigma^2) \times \calD(\sigma^3, k^3),
$$
that is, a $0$ will designate that we do not have goodness in that parameter.

As for most of the argument $\sigma$ is fixed, it makes sense to mainly suppress it from the notation and
abbreviate, whenever possible, that
$$
\calD^m = \calD(\sigma^m), \qquad \calD(\sigma^m, k^m)=\calD^m(k^m), \qquad m = 1,2,3.
$$ Then also
$$
\calD = \calD(\sigma) = \prod_{m=1}^3 \calD^m,\qquad \calD(k)=\prod_{m=1}^3 \calD^m(k^m) .
$$ 
We define the Zygmund rectangles $\calD_Z \subset \calD$
by setting
\begin{equation}\label{zyglat}
\calD_Z = \Big\{I = \prod_{m=1}^3 I^m \in \calD \colon \ell(I^1) \ell(I^2) = \ell(I^3)\Big\}.
\end{equation}
Obviously, $\calD_Z(k)$ is defined similarly as above but also requires $\prod_{m=1}^3 I^m \in \calD(k)$.

\subsection{Zygmund martingale differences}
Given $I = \prod_{m=1}^3 I^m$ we define the Zygmund martingale difference operator
$$
\Delta_{I,Z} f := \Delta_{I^1} \Delta_{I^2 \times I^3} f.
$$
\begin{rem}
We highlight that the martingale difference $\Delta_{I^2 \times I^3}$ is the \textbf{one-parameter} (and not the bi-parameter) martingale difference on the rectangle $I^2 \times I^3$:
$$
\Delta_{I^2 \times I^3} = \Delta_{I^2} \Delta_{I^3}  + E_{I^2} \Delta_{I^3} + \Delta_{I^2} E_{I^3} \ne \Delta_{I^2} \Delta_{I^3}.
$$
Moreover, the above operators
really act on the full product space but only on the given parameters -- for instance, $\Delta_{I^1} f(x_1, x_2, x_3)
= \Delta_{I^1}^1 f(x_1, x_2, x_3) =  (\Delta_{I^1} f(\cdot, x_2, x_3))(x_1)$.
\end{rem}
We recall the following facts from \cite{HLMV-ZYG}. 
For a dyadic $\lambda > 0$ define the dilated lattices
$$
\calD_{\lambda}^{2,3} = \{I^{2,3}\in \calD^{2,3} := \calD^2 \times \calD^3 \colon \ell(I^3) = \lambda \ell(I^2)\}.
$$
The basic Zygmund expansion goes as follows:
\begin{equation}\label{zygexp}
\begin{split}
f = \sum_{I^1 \in \calD^1} \Delta_{I^1} f = \sum_{I^1 \in \calD^1} \sum_{I^{2,3} \in \calD^{2,3}_{\ell(I^1)}} 
\Delta_{I^1} \Delta_{I^{2,3}}  f
= \sum_{I \in \calD_Z} \Delta_{I, Z} f.
\end{split}
\end{equation}
However, the way we split our operators will not be this simple.

The following basic results hold for the martingale differences.
For $I, J \in \calD_Z$ we have
$$
  \Delta_{I,Z} \Delta_{J,Z} f = \left\{ \begin{array}{ll}
  \Delta_{I,Z} & \textup{if } I = J, \\
  0 & \textup{if } I \ne J.
  \end{array} \right.
$$
Notice also that the Zygmund martingale differences satisfy
$$
\int_{\R} \Delta_{I,Z}f \ud x_1 = 0 \qquad \textup{and} \qquad \int_{\R^2} \Delta_{I,Z}f \ud x_2 \ud x_3 = 0.
$$
Moreover, we have
$$
\int (\Delta_{I, Z} f) g = \int f \Delta_{I,Z} g.
$$

\subsection{Haar functions}
For an interval $J \subset \R$ we denote by $J_{l}$ and $J_{r}$ the left and right
halves of $J$, respectively. We define
$$
h_{J}^0 = |J|^{-1/2}1_{J} \qquad \textup{and} \qquad h_{J}^1 = h_{J} = |J|^{-1/2}(1_{J_{l}} - 1_{J_{r}}).
$$
The reader should carefully notice that $h_I^0$ is the non-cancellative Haar function for us and that
in some other papers a different convention is used.

As we mostly work on $\R^3 = \R \times \R^2$ we require some Haar functions on $\R^2$ as well.
For $I^2 \times I^3 \subset \R^2$ and
$\eta = (\eta_2, \eta_3) \in \{0,1\}^2$ define 
$$
h_{I^2 \times I^3}^{\eta} = h_{I^2}^{\eta_2} \otimes h_{I^3}^{\eta_3}.
$$
Similarly, as $h_{I^1}$ denotes a cancellative Haar function on $\R$,
we let $h_{I^2 \times I^3}$ denote a cancellative \textbf{one-parameter} Haar function
on $I^2 \times I^3$. This means that
$$
h_{I^2 \times I^3} = h_{I^2 \times I^3}^{\eta}
$$
for some $\eta = (\eta_2, \eta_3) \in \{0,1\}^2 \setminus \{(0,0)\}$. We only 
use a $0$ to denote a non-cancellative Haar function: $h_{I^2 \times I^3}^0 = 
h_{I^2 \times I^3}^{(0,0)}$.

We suppress this $\eta$ dependence
in all that follows in the sense that a finite $\eta$ summation is not written. For example,
given $I = I^1 \times I^2 \times I^3 \in \calD_Z \subset \prod_{m=1}^3 \calD^m$ decompose
$$
\Delta_{I, Z}f = \Delta_{I^1} \Delta_{I^2 \times I^3} f = \langle f, h_{I^1} \otimes h_{I^2 \times I^3} \rangle h_{I^1} \otimes h_{I^2 \times I^3} =: 
\langle f, h_{I,Z} \rangle h_{I,Z}.
$$

\subsection{Bilinear Zygmund shifts}\label{subsec:shifts}
In preparation for defining the shifts, we define the following notation.
Let $I_1, I_2, I_3$ be rectangles, $I_j = I^1_j \times I^2_j \times I^3_j = I^1_j \times I^{2,3}_j$, and $f_1, f_2, f_3$ be functions
defined on $\R^3$.
For $j_1, j_2 \in \{1,2,3\}$ define
\begin{align*}
A^{j_1,j_2}_{I_1,I_2,I_3} = A^{j_1,j_2}_{I_1,I_2,I_3}(f_1,f_2,f_3)  := \prod_{j = 1}^3 \ave{f_j, v_{I_j}},
\end{align*}
where
\begin{align*}
  v_{I_j} &= \wt h_{I_j^1} \otimes \wt h_{I_j^{2,3}}; \\
  \wt h_{I_{j_1}^1} &= h_{I_{j_1}^1} \qquad \textup{and} \qquad \wt h_{I_{j}^1} = h_{I_{j}^1}^0,\, j\neq j_1; \\
  \wt h_{I_{j_2}^{2,3}} &= h_{I_{j_2}^{2,3}} \qquad \textup{and} \qquad \wt h_{I_{j}^{2,3}} = h_{I_{j}^{2,3}}^0,\, j\neq j_2.
\end{align*}
For a dyadic $\lambda > 0$ define
$$
\calD_\lambda = \{K = K^1\times K^2\times K^3 \in \calD \colon \lambda\ell(K^1)\ell(K^2) = \ell(K^3) \}.
$$
Moreover, for a rectangle $I = I^1 \times I^2 \times I^3$ and $k = (k^1, k^2,k^3)$ define
$$
I^{(k)} = I_1^{(k^1)} \times I_2^{(k^2)} \times I_3^{(k^3)}.
$$
\begin{defn}
  Let $k = (k^1, k^2,k^3)$, $k^i \in \{0,1,2,\ldots\}$, be fixed. A bilinear Zygmund shift $Q = Q_k$
  of complexity $k$ has the form
  \begin{align*}
    &\ave{Q_{k}(f_1,f_2),f_3} \\
    &= \sum_{K \in \calD_{2^{-k^1-k^2+k^3}}}\sum_{\substack{I_1,I_2,I_3 \in \calD_{Z}\\ I_j^{(k)} = K}} a_{K,(I_j)} \Big[A^{j_1,j_2}_{I_1,I_2,I_3} - A^{j_1,j_2}_{I_{j_1}^1 \times I_1^{2,3},I_{j_1}^{1} \times I_2^{2,3}, I_{j_1}^{1} \times I_3^{2,3}}\\
    &\hspace{10em} -A^{j_1,j_2}_{I_{1}^1 \times I_{j_2}^{2,3},I_{2}^{1} \times I_{j_2}^{2,3}, I_{3}^{1} \times I_{j_2}^{2,3}} +  A^{j_1,j_2}_{I_{j_1}^1 \times I_{j_2}^{2,3},I_{j_1}^{1} \times I_{j_2}^{2,3}, I_{j_1}^{1} \times I_{j_2}^{2,3}} \Big]
  \end{align*}
  for some $j_1, j_2 \in \{1,2,3\}$. The coefficients $a_{K,(I_j)}$ satisfy
  $$
  |a_{K,(I_j)}| \le \frac{|I_1|^{1/2}|I_2|^{1/2}|I_3|^{1/2}}{|K|^2} = \frac{|I_1|^{3/2}}{|K|^2}.
  $$
\end{defn}
Now, the game is to represent bilinear singular integrals using the operators $Q_k$ and also -- independently -- 
bound the operators $Q_k$ suitably. We start with the representation part and deal with bounding the operators later.
We have not defined our singular integrals carefully yet, however,
a lot of the required decomposition can be formally carried out for an arbitrary operator $T$. The singular integral 
part is later required to get sufficient decay for the appearing scalar coefficients and to handle the paraproducts.

\subsection{Zygmund decomposition of $\langle T(f_1, f_2), f_3\rangle$}\label{sec:ZygDeco}
For now, we focus on the multiresolution part and start formally decomposing a general bilinear operator.
We begin by writing $\langle T(f_1, f_2), f_3\rangle$ as 
\begin{align*}
  \sum_{I^1_1, I^1_2, I^1_3 \in \calD^1}& \langle T(\Delta_{I^1_1} f_1, \Delta_{I^1_2} f_2), \Delta_{I^1_3} f_3\rangle \\
  &= \sum_{\substack{I^1_1, I^1_2, I^1_3 \in \calD^1 \\ \ell(I^1_1), \ell(I^1_2) > \ell(I^1_3)  }} \langle T(\Delta_{I^1_1} f_1, \Delta_{I^1_2} f_2), \Delta_{I^1_3} f_3\rangle \\
  &+ \sum_{\substack{I^1_1, I^1_2, I^1_3 \in \calD^1 \\ \ell(I^1_1), \ell(I^1_3) > \ell(I^1_2)  }} \langle T(\Delta_{I^1_1} f_1, \Delta_{I^1_2} f_2), \Delta_{I^1_3} f_3\rangle \\
  &+ \sum_{\substack{I^1_1, I^1_2, I^1_3 \in \calD^1 \\ \ell(I^1_2), \ell(I^1_3) > \ell(I^1_1)  }} \langle T(\Delta_{I^1_1} f_1, \Delta_{I^1_2} f_2), \Delta_{I^1_3} f_3\rangle \\
  &+ \sum_{\substack{I^1_1, I^1_2, I^1_3 \in \calD^1 \\ \ell(I^1_1) > \ell(I^1_2) = \ell(I^1_3)  }} \langle T(\Delta_{I^1_1} f_1, \Delta_{I^1_2} f_2), \Delta_{I^1_3} f_3\rangle \\
  &+ \sum_{\substack{I^1_1, I^1_2, I^1_3 \in \calD^1 \\ \ell(I^1_2) > \ell(I^1_1) = \ell(I^1_3)  }} \langle T(\Delta_{I^1_1} f_1, \Delta_{I^1_2} f_2), \Delta_{I^1_3} f_3\rangle \\
  &+ \sum_{\substack{I^1_1, I^1_2, I^1_3 \in \calD^1 \\ \ell(I^1_3) > \ell(I^1_1) = \ell(I^1_2)  }} \langle T(\Delta_{I^1_1} f_1, \Delta_{I^1_2} f_2), \Delta_{I^1_3} f_3\rangle \\
  &+ \sum_{\substack{I^1_1, I^1_2, I^1_3 \in \calD^1 \\ \ell(I^1_1) = \ell(I^1_2) = \ell(I^1_3)  }} \langle T(\Delta_{I^1_1} f_1, \Delta_{I^1_2} f_2), \Delta_{I^1_3} f_3\rangle.
\end{align*}
We collapse the first six sums, which are not already diagonal sums, into diagonal sums
$$
\sum_{\substack{I^1_1, I^1_2, I^1_3 \in \calD^1 \\ \ell(I^1_1) = \ell(I^1_2) = \ell(I^1_3)  }}.
$$
This has the effect that whenever we have an inequality $\ell(I^1_i) > \ell(I^1_j)$, the martingale difference operator
$\Delta_{I^1_i}$ corresponding with the larger cube is changed to the averaging operator $E_{I^1_i}$. Thus, in the first three
sums we now have two averaging operators, and in the next three we have one averaging operator. The more averaging operators we have, the less cancellation we have,
and thus the main challenge are the first three sums with the least cancellation. We mainly focus on the first three sums for this reason.

In addition, the first three sums are symmetric, so we may focus on only one of them, and choose to look at
$$
\sum_{\substack{I^1_1, I^1_2, I^1_3 \in \calD^1 \\ \ell(I^1_1), \ell(I^1_2) > \ell(I^1_3)  }} \langle T(\Delta_{I^1_1} f_1, \Delta_{I^1_2} f_2), \Delta_{I^1_3} f_3\rangle
= \sum_{\substack{I^1_1, I^1_2, I^1_3 \in \calD^1 \\ \ell(I^1_1) = \ell(I^1_2) = \ell(I^1_3)  }}
\langle T(E_{I^1_1} f_1, E_{I^1_2} f_2), \Delta_{I^1_3} f_3\rangle.
$$
Now, we fix $I^1_1, I^1_2, I^1_3 \in \calD^1$ with $\ell(I^1_1) = \ell(I^1_2) = \ell(I^1_3)$ and
repeat the argument for $\langle T(E_{I^1_1} f_1, E_{I^1_2} f_2), \Delta_{I^1_3} f_3\rangle$
using the lattice $\calD^{2,3}_{\ell(I^1)}$, where recall that 
for a dyadic $\lambda > 0$ we have
\begin{equation*}
\calD_{\lambda}^{2,3} = \{I^2 \times I^3 \in \calD^{2,3} := \calD^2 \times \calD^3 \colon \ell(I^3) = \lambda \ell(I^2)\}.
\end{equation*}

This produces seven terms, and we again focus on
$$
\sum_{\substack{I^2_1 \times I^3_1, I^2_2 \times I^3_2, I^2_3 \times I^3_3 \in \calD^{2,3}_{\ell(I^1)} \\ \ell(I^2_1) = \ell(I^2_2) = \ell(I^2_3)  }}
\langle T(E_{I^1_1} E_{I^2_1 \times I^3_1} f_1, E_{I^1_2} E_{I^2_2 \times I^3_2} f_2), \Delta_{I^1_3} \Delta_{I^2_3 \times I^3_3} f_3\rangle.
$$
Altogether, our focus, for now, is on the key term
\begin{equation}\label{eq:key}
  \sum_{\substack{I_1, I_2, I_3 \in \calD_Z \\ \ell(I_1) = \ell(I_2) = \ell(I_3) }}
  \langle T(E_{I_1} f_1, E_{I_2} f_2), \Delta_{I_3, Z} f_3 \rangle,
\end{equation}
where $\ell(I_1) = \ell(I_2) = \ell(I_3)$ means that
$$
\ell(I_1^m) = \ell(I_2^m) = \ell(I_3^m), \qquad m = 1,2,3.
$$
This was completely generic -- we now go a step further to the direction of Zygmund shifts and start introducing 
Haar functions into the mix.

\subsection{Further decomposition of \eqref{eq:key}}\label{sec:ZygDecoRefined}
Write
$$
\langle T(E_{I_1} f_1, E_{I_2} f_2), \Delta_{I_3, Z} f_3 \rangle = 
\langle T(h_{I_1}^0, h_{I_2}^0), h_{I_3, Z} \rangle \langle f_1, h_{I_1}^0 \rangle
\langle f_2, h_{I_2}^0 \rangle \langle f_3, h_{I_3, Z} \rangle.
$$
Now, we perform a rather complicated decomposition of the product $\langle f_1, h_{I_1}^0 \rangle \langle f_2, h_{I_2}^0 \rangle$.
To this end, start by writing
\begin{align*}
  &\langle f_1, h_{I_1}^0 \rangle \langle f_2, h_{I_2}^0 \rangle \\ 
  &= \Big[\langle f_1, h_{I_1}^0 \rangle \langle f_2, h_{I_2}^0 \rangle - \langle f_1, h_{I^1_3}^0 h_{I^{2,3}_1}^0 \rangle \langle f_2, h_{I^1_3}^0 h_{I^{2,3}_2}^0 \rangle\Big]
  + \langle f_1, h_{I^1_3}^0 h_{I^{2,3}_1}^0 \rangle \langle f_2, h_{I^1_3}^0 h_{I^{2,3}_2}^0 \rangle \\
  &=: A_1 + A_2.
\end{align*}
We then further decompose $A_1$ as follows
\begin{align*}
  A_1 = \Big[\langle f_1, h_{I_1}^0 \rangle \langle f_2, h_{I_2}^0 \rangle &- \langle f_1, h_{I^1_3}^0 h_{I^{2,3}_1}^0 \rangle \langle f_2, h_{I^1_3}^0 h_{I^{2,3}_2}^0 \rangle \\
  &- \langle f_1, h_{I^1_1}^0 h_{I^{2,3}_3}^0 \rangle \langle f_2, h_{I^1_2}^0 h_{I^{2,3}_3}^0 \rangle 
  + \langle f_1, h_{I^1_3}^0 h_{I^{2,3}_3}^0 \rangle \langle f_2, h_{I^1_3}^0 h_{I^{2,3}_3}^0 \rangle\Big] \\
  &+ \Big\{ \langle f_1, h_{I^1_1}^0 h_{I^{2,3}_3}^0 \rangle \langle f_2, h_{I^1_2}^0 h_{I^{2,3}_3}^0 \rangle - \langle f_1, h_{I^1_3}^0 h_{I^{2,3}_3}^0 \rangle \langle f_2, h_{I^1_3}^0 h_{I^{2,3}_3}^0 \rangle \Big\}.
\end{align*}

When we later specialize to singular integrals, we will in particular make the following assumption.
We say that $T$ is a paraproduct free operator, if for all cancellative Haar functions $h_{I^1}$ and $h_{I^{2,3}}$ we have
\begin{align*}
 & \ave{T(1\otimes 1_{J_1^{2,3}},1\otimes 1_{J_2^{2,3}}),h_{I^1} \otimes 1_{J_3^{2,3}}} = \ave{T_1^{*,j}(1\otimes 1_{J_1^{2,3}},1\otimes 1_{J_2^{2,3}}),h_{I^1} \otimes 1_{J_3^{2,3}}} \\
 &\quad =\ave{T(1_{I_1^1}\otimes 1, 1_{I_2^1}\otimes 1), 1_{I_3^1}\otimes h_{I^{2,3}}}=\ave{T_{2,3}^{*, j}(1_{I_1^1}\otimes 1, 1_{I_2^1}\otimes 1), 1_{I_3^1}\otimes h_{I^{2,3}}}=0
\end{align*}
for all the adjoints $j \in \{1,2\}$. 
With this assumption in the full summation \eqref{eq:key} everything else vanishes except
\begin{align*}
  \sum_{\substack{I_1, I_2, I_3 \in \calD_Z \\ \ell(I_1) = \ell(I_2) = \ell(I_3) }}
  \langle T(h_{I_1}^0, h_{I_2}^0),& h_{I_3, Z} \rangle  \Big[\langle f_1, h_{I_1}^0 \rangle \langle f_2, h_{I_2}^0 \rangle - \langle f_1, h_{I^1_3 \times I^{2,3}_1}^0 \rangle \langle f_2, h_{I^1_3 \times I^{2,3}_2}^0 \rangle \\
  &- \langle f_1, h_{I^1_1 \times I^{2,3}_3}^0 \rangle \langle f_2, h_{I^1_2 \times I^{2,3}_3}^0 \rangle 
  + \langle f_1, h_{I_3}^0 \rangle \langle f_2, h_{I_3}^0 \rangle\Big] \langle f_3, h_{I_3, Z} \rangle.
\end{align*}
So we eliminated the paraproducts by assumption, and now we have to manipulate this 
remaining term to a suitable form involving shifts.

In the above sum we will relabel $I_3 = I = I^1 \times I^2 \times I^3 = I^1 \times I^{2,3}$. Then, for
$n_1 = (n_1^1, n_1^2, n_1^3) = (n_1^1, n_1^{2,3})$
we write
$$
I_1 = I \dotplus n_1 
= (I^1 + n_1^1\ell(I^1)) \times (I^2 + n_1^2\ell(I^2)) \times (I^3 + n_1^3\ell(I^3))
=  (I^1 \dotplus n_1^1) \times (I^{2,3} \dotplus n_1^{2,3}).
$$
We write $I_2$ similarly as $I_2 = I \dotplus n_2$. Notice that if $n_1^1 = n_2^1 = 0$, then
the term inside the summation vanishes. Similarly, if $n_1^{2,3} = n_2^{2,3} = (0, 0)$, the term 
inside the summation vanishes. So we need to study
$$
\sum_{\substack{ n_1, n_2 \in \Z^3 \\ \max (|n_1^1|, |n_2^1|) \ne 0 \\ \max (|n_1^2|, |n_2^2|) \ne 0 \textup{ or } \max (|n_1^3|, |n_2^3|) \ne 0  }} 
\sum_{I \in \calD_Z} c_{I, n_1, n_2},
$$
where
\begin{align*}
  &c_{I, n_1, n_2}  \\
  &=   \langle T(h_{I \dotplus n_1}^0, h_{I \dotplus n_2}^0),h_{I, Z} \rangle  \Big[\langle f_1, h_{I \dotplus n_1}^0 \rangle \langle f_2, h_{I \dotplus n_2}^0 \rangle - \langle f_1, h_{I^1 \times (I^{2,3} \dotplus n_1^{2,3}) }^0 \rangle \langle f_2, h_{I^1 \times (I^{2,3} \dotplus n_2^{2,3})}^0 \rangle \\
  &- \langle f_1, h_{(I^1 \dotplus n_1^1) \times I^{2,3}}^0 \rangle \langle f_2, h_{(I^1 \dotplus n_2^1) \times I^{2,3}}^0 \rangle 
  + \langle f_1, h_{I}^0 \rangle \langle f_2, h_{I}^0 \rangle\Big] \langle f_3, h_{I, Z} \rangle.
\end{align*}

We write
\begin{align*}
&\sum_{\substack{ n_1, n_2 \in \Z^3 \\ \max\limits_{j=1,2} |n_j^1|\ne 0 \\ \max\limits_{j=1,2} |n_j^2| \ne 0 \textup{ or } \max\limits_{j=1,2} |n_j^3| \ne 0  }} 
\sum_{I \in \calD_Z} c_{I, n_1, n_2}\\
 &= \sum_{k^1, k^2, k^3 = 2}^\infty \sum_{\substack{ n_1, n_2 \in \Z^3 \\ \max\limits_{j  =1,2}|n_j^m| \in (2^{k^m-3}, 2^{k^m - 2}]\\ m=1,2,3}}
\sum_{I \in \calD_Z} c_{I, n_1, n_2} \\&\qquad+  \sum_{k^1, k^2 = 2}^\infty \sum_{\substack{ n_1, n_2 \in \Z^3 \\  \max\limits_{j  =1,2}|n_j^m| \in (2^{k^m-3}, 2^{k^m - 2}]\\ m=1,2 \\ n_1^3=n_2^3 = 0}} 
\sum_{I \in \calD_Z} c_{I, n_1, n_2}  \\
&\qquad+ \Sigma_{sym},\\
\end{align*}
where $\Sigma_{sym}$ is symmetric to the second term and has $n^2_1 = n^2_2 = 0$.

Recall how everything implicitly depends on the random parameter $\sigma$, so that we can average over it.
By independence, like in \cite{HLMV-ZYG}, we have by \eqref{eq:gprob} that
\begin{equation}\label{eq:maingood}
  \begin{split}
    & \E_{\sigma} \sum_{k^1, k^2, k^3 = 2}^\infty \sum_{\substack{ n_1, n_2 \in \Z^3 \\ \max\limits_{j  =1,2}|n_j^m| \in (2^{k^m-3}, 2^{k^m - 2}]\\ m=1,2,3}}
  \sum_{I \in \calD_Z} c_{I, n_1, n_2} \\
  &= 8 \E_{\sigma} \sum_{k^1, k^2, k^3 = 2}^\infty \sum_{\substack{ n_1, n_2 \in \Z^3 \\ \max\limits_{j  =1,2}|n_j^m| \in (2^{k^m-3}, 2^{k^m - 2}]\\ m=1,2,3}}
  \sum_{I \in \calD_{Z}(k)} c_{I, n_1, n_2}, \qquad k = (k^1, k^2, k^3).
  \end{split}
\end{equation}

For the other two terms, where $n_j^2=0$ or $n_j^3 = 0$, we perform the above but do not add goodness to the second
and third parameters, respectively. 
For example, we have 
\begin{align*}
&\E_{\sigma} \sum_{k^1, k^2 = 2}^\infty \sum_{\substack{ n_1, n_2 \in \Z^3 \\  \max\limits_{j  =1,2}|n_j^m| \in (2^{k^m-3}, 2^{k^m - 2}]\\ m=1,2 \\ n_1^3=n_2^3 = 0}} \sum_{I \in \calD_{Z}} c_{I, n_1, n_2}\\
 &= 4\E_{\sigma} \sum_{k^1, k^2 = 2}^\infty \sum_{\substack{ n_1, n_2 \in \Z^3 \\  \max\limits_{j  =1,2}|n_j^m| \in (2^{k^m-3}, 2^{k^m - 2}]\\ m=1,2 \\ n_1^3=n_2^3 = 0}} \sum_{I \in \calD_{Z}(k^1,k^2, 0)} c_{I, n_1, n_2}.
\end{align*}

Continuing with \eqref{eq:maingood}, we write it  as
\begin{align*}
C 8 \E_{\sigma}& \sum_{k^1, k^2, k^3 = 2}^\infty (|k| +1)^{2} \varphi(k)\\
  & \sum_{K \in \calD_\lambda}\sum_{\substack{I \in \calD_{Z}(k)\\ I^{(k)} = K}} \sum_{\substack{ n_1, n_2 \in \Z^3 \\ \max_{j  =1,2}|n_j^m| \in (2^{k^m-3}, 2^{k^m - 2}]\\ m=1,2,3}}
 \frac{c_{I, n_1, n_2}}{C(|k| +1)^2 \varphi(k)} ,
\end{align*}
where
$$
\calD_\lambda = \{K = K^1\times K^2\times K^3 \in \calD \colon \lambda\ell(K^1)\ell(K^2) = \ell(K^3) \},
\qquad \lambda = 2^{k^3 -k^1 - k^2 },
$$ and $C$ is some suitably large constant depending on $T$.
Recall that by \eqref{eq:kparent} we also have
$$
(I \dotplus n_1)^{(k)} = (I \dotplus n_2)^{(k)} = I^{(k)} = K.
$$
We have arrived to a point where we cannot go further without talking about singular integrals. Indeed,
we need kernel estimates to control the coefficients. But on a structural level (with the paraproduct free assumption), 
we have obtained a reasonable representation of the main term \eqref{eq:key} in terms of sums of bilinear Zygmund shifts.

\section{Bilinear Zygmund singular integrals}\label{sec:ZSIO}
We begin by defining the required kernel estimates and cancellation conditions
for bilinear singular integrals $T$ invariant under Zygmund dilations. For motivation
for the form of the kernel estimates, see Appendix \ref{app:multipliers} for kernel
bounds of bilinear multipliers. This viewpoint makes the kernel estimates natural --
on the other hand, they are also of the right form so that we will be able to bound the 
coefficients from the multiresolution decomposition and obtain reasonable decay.

\subsection{Full kernel representation}
Our bilinear singular integral $T$ invariant under Zygmund dilations is related to a full kernel $K$ in the following way. 
The kernel $K$ is  a function
$$
K \colon (\R^3 \times \R^3 \times \R^3) \setminus \Delta \to \C,
$$
where
$$
\Delta=\{(x,y,z) \in \R^3 \times \R^3 \times \R^3 \colon  x_i = y_i = z_i \text{ for at least one } i = 1,2,3\}. 
$$
We look at the action of $T$ on rectangles like $I^1 \times I^2 \times I^3=: I^1 \times I^{2,3}$ in $\R^3 = \R \times \R \times \R = \R \times \R^2$.
So let $I_i = I^1_i \times I^2_i \times I^3_i$ be rectangles, $i = 1,2,3$.
Assume that there exists  $i_1, i_2,j_1,j_2\in \{1,2,3\}$ so that $I_{i_1}^1$ and  $I_{i_2}^1$ are disjoint and also $I_{j_1}^{2,3}$ and $I_{j_2}^{2,3}$ are disjoint.
Then we have the full kernel representation
$$
\ave {T(1_{I_1}, 1_{I_2}),1_{I_3}}
= \iiint K(x,y,z) 1_{I_1}(x) 1_{I_2}(y)1_{I_3}(z) \ud x\ud y \ud z.
$$
The kernel $K$ satisfies the following estimates.

First, we define the decay factor
$$
D_\theta(x,y) = \Bigg(\frac{\prod_{i=1}^2  (|x_i| + |y_i|)}{|x_3| + |y_3|}+\frac{|x_3| + |y_3|}{\prod_{i=1}^2  (|x_i| + |y_i|)}\Bigg)^{-\theta},\qquad \theta\in (0,2],
$$ 
% $$
%   D_\theta(x - z,y - z):= \Big(\frac{\prod_{i=1}^2  (|x_i - z_i| + |y_i - z_i|)}{|x_3 - z_3| + |y_3 - z_3|}+\frac{|x_3 - z_3| + |y_3 - z_3|}{\prod_{i=1}^2  (|x_i - z_i| + |y_i - z_i|)}\Big)^{-\theta},\qquad \theta\in (0,1]
% $$
and the tri-parameter bilinear size factor
$$
S(x , y) = \prod_{i=1}^3\frac{1}{\big(|x_i| + |y_i|\big)^2}.
$$
% $$
% S(x - z, y - z) = \prod_{i=1}^3\frac{1}{\Big( (|x_i - z_i| + |y_i - z_i|)\Big)^2}.
% $$
We demand the following size estimate
\begin{equation}
|K(x,y,z)|
\lesssim D_\theta(x - z,y - z) S(x-z,y-z).
\end{equation}

% For brevity, let us define H\"older type quantities 
% $$
% H^{\alpha_1}(z^1,x_1,y_1,z_1) =\Big(\frac{|z^1-x_1|}{\sum_{j=1}^2 |x_j^1 - z_1|}\Big)^{\alpha_1} 
% $$
% and
% $$
% H^{\alpha_{23}}(z^{23},x_1^{2,3},x_2^{2,3},x_3^{2,3}) = \Big(\frac{|z^2 -x_1^2|}{\sum_{j=1,2} |x_j^2-x_3^2|}+ \frac{|z^3-x_1^2|}{\sum_{j=1,2} |x_j^3-z_3|} \Big)^{\alpha_{23}}.
% $$
%We \emph{might} sometimes just write $H^{23}(z^{2,3},x^{2,3}) = H^{\alpha_{23}}(z^{2,3},x_1^{2,3},x_2^{2,3},x_3^{2,3}),$ since from context this is quite obvious.

Let now $c=(c_1,c_2,c_3)$ be such that $|c_i- x_i| \le\max(|x_i - z_i|, |y_i - z_i|)/2$ for $i=1,2,3$. We assume that $K$ satisfies the mixed size and H\"older estimates
\begin{equation}
\begin{split}
|K((c_1,&x_2,x_3),y,z)-K(x,y,z)|\\
&\lesssim  \Big(\frac{|c_1-x_1|}{|x_1 - z_1| + |y_1 - z_1|}\Big)^{\alpha_1}D_\theta(x - z,y- z) S(x-z,y-z),
\end{split}
\end{equation}
and
\begin{equation}
\begin{split}
|K&((x_1,c_2,c_3),y,z)-K(x,y,z)|\\
&\lesssim  \Big(\frac{|c_2 -x_2|}{|x_2 - z_2| + |y_2 - z_2|} + \frac{|c_3-x_3|}{|x_3 - z_3| + |y_3 - z_3|} \Big)^{\alpha_{23}} D_\theta(x - z, y - z)S(x - z, y- z),
\end{split}
\end{equation}
where $\alpha_1, \alpha_{23} \in (0,1]$.
Finally, we assume that  $K$ satisfies the H\"older estimate
\begin{equation}
\begin{split}
&|K(c,y,z)-K((c_1,x_2,x_3),y,z)-K((x_1,c_2,c_3),y,z)+K(x,y,z)| \\
&\lesssim  \Big(\frac{|c_1-x_1|}{|x_1-z_1|+|y_1-z_1|}\Big)^{\alpha_1}\Big(\frac{|c_2 -x_2|}{|x_2 - z_2| + |y_2 - z_2|} + \frac{|c_3-x_3|}{|x_3 - z_3| + |y_3 - z_3|} \Big)^{\alpha_{23}} \\
&\quad\times D_\theta(x - z, y - z)S(x - z, y- z).
\end{split}
\end{equation}
We also demand the symmetrical mixed size and H\"older estimates and H\"older estimates. 
For $j=1,2$, define the adjoint kernels $K^{*, j}$, $K^{*, j}_1$ and $K^{*, j}_{2,3}$ via the natural formulas, e.g., 
\begin{align*}
K^{*, 1}(x, y,z)=K(z,y,x), \quad K^{*, 2}_1(x,y,z)= K(x, (z_1,y_2,y_3), (y_1,z_2,z_3)).
\end{align*}
We assume that each adjoint kernel satisfies the same estimates as the kernel $K$.

\subsection{Partial kernel representations}\label{subsec:partial} 
Let $\wt \theta \in (0,1].$
For every interval $I^1$ we assume that there exists a kernel 
$$
K_{I^1} \colon (\R^2 \times \R^2 \times \R^2) \setminus \{(x_{2,3},y_{2,3},z_{2,3})\colon x_{i}= y_{i} = z_i \text{ for } i= 2 \text{ or } i=3\} \to \C,
$$
so that if $I_{j_1}^{2,3}$ and $I_{j_2}^{2,3}$ are disjoint for some $j_1,j_2\in \{1,2,3\},$  then
\begin{align*}
\langle T(1_{I^1} &\otimes 1_{I_1^{2,3}}, 1_{I^1} \otimes 1_{I_2^{2,3}}), 1_{I^1} \otimes 1_{I_3^{2,3}}\rangle
\\&= \iiint K_{I^1}(x_{2,3},y_{2,3},z_{2,3}) 1_{I_1^{2,3}}(x_{2,3})1_{I_2^{2,3}}(y_{2,3})1_{I_3^{2,3}}(z_{2,3}) \ud x_{2,3} \ud y_{2,3} \ud z_{2,3}.
\end{align*}

We demand the following estimates for the kernel $K_{I^1}\colon$ The size estimate
\begin{align*}
&|K_{I^1}(x_{2,3},y_{2,3},z_{2,3}) | \\
&\lesssim\Big(\frac{ |I^1|(|x_2 - z_2| + |y_2 - z_2|)}{|x_3 - z_3| + |y_3 - z_3|}+\frac{|x_3 - z_3| + |y_3 - z_3|}{ |I^1|(|x_2 - z_2| + |y_2 - z_2|)}\Big)^{ -\wt \theta}   \frac{ |I^1| }{\prod_{i=2}^3 \big(|x_i - z_i| + |y_i - z_i|\big)^2}
 \end{align*}
and the continuity estimate
\begin{equation*}
\begin{split}
&|K_{I^1}(c_{2,3},y_{2,3},z_{2,3})-K_{I^1}(x_{2,3},y_{2,3},z_{2,3})|  \\
&\lesssim 
\Big(\frac{|c_2 -x_2|}{|x_2 - z_2| + |y_2 - z_2|}+ \frac{|c_3-x_3|}{|x_3 - z_3| + |y_3 - z_3|} \Big)^{\alpha_{23}}\\
&\times \Big(\frac{ |I^1|(|x_2 - z_2| + |y_2 - z_2|)}{|x_3 - z_3| + |y_3 - z_3|}+\frac{|x_3 - z_3| + |y_3 - z_3|}{ |I^1|(|x_2 - z_2| + |y_2 - z_2|)}\Big)^{-\wt \theta} \frac{|I^1|}{\prod_{i=2}^3\big(|x_i - z_i| + |y_i - z_i|\big)^2}
\end{split}
\end{equation*}
whenever $c_{2,3}=(c_2,c_3)$ is such that $|c_i- x_i| \le\max (|x_i - z_i|, |y_i-z_i|)/2$ for $i=2,3$. We also assume the symmetrical continuity estimates.
% Note that, for example, {\color{blue} Is this helpful?}
% \begin{align*}
%   |K_{I^1}(x_{2,3},y_{2,3},z_{2,3})| \lesssim &|I^1|D_\theta((|I^1|,x_2,x_3)-(0,z_2,z_3),(0,y_2,y_3)-(0,z_2,z_3))\\
%   &\times S((0,x_2,x_3)-(0,z_2,z_3),(0,y_2,y_3)-(0,z_2,z_3)).
% \end{align*}

We assume similar one-parameter conditions for the other partial kernel representation.
That is, for every rectangle $I^{2,3}$, there exists a standard bilinear Calder\'on-Zygmund kernel $K_{I^{2,3}}$ so that
if $I^1_{j_1}$ and $I^1_{j_2}$ are disjoint for some $j_1,j_2 \in \{1,2,3\},$ then 
\begin{align*}
  \langle T(1_{I_1^1} &\otimes 1_{I^{2,3}}, 1_{I_2^1} \otimes 1_{I^{2,3}}), 1_{I_3^1} \otimes 1_{I^{2,3}}\rangle
  \\&= \iiint K_{I^{2,3}}(x_1,y_1,z_1) 1_{I_1^{1}}(x_1)1_{I_2^{1}}(y_1)1_{I_3^{1}}(z_1) \ud x_1 \ud y_1 \ud z_1.
\end{align*}
The kernel $K_{I^{2,3}}$ satisfies the standard estimates
$$
|K_{I^{2,3}}(x_1,y_1,z_1)| \leq C_{K_{I^{2,3}}} \frac{1}{(|x_1-z_1| + |y_1 - z_1|)^2},
$$
\[
|K_{I^{2,3}}(x_1,y_1,z_1)- K_{I^{2,3}}(c_{1},y_1,z_1)| \leq C_{K_{I^{2,3}}} \frac{|x_1-c_1|^{\alpha_1}}{(|x_1-z_1| + |y_1 - z_1|)^{2 + \alpha_1}}
\]
whenever $|x_1-c_1| \le \max(|x_1-z_1|,|y_1 - z_1|)/2,$
and the symmetric continuity estimates. 
The smallest possible constant $C_{K_{I^{2,3}}}$ in these inequalities is denoted by $\|K_{I^{2,3}}\|_{\text{CZ}_{\alpha_1}}.$ We then assume that
$$
\|K_{I^{2,3}}\|_{\text{CZ}_{\alpha_1}} \lesssim |I^{2,3}|.
$$ 

\subsection{Cancellation assumptions: paraproduct free operators} 
We say that $T$ is a paraproduct free operator, if for all cancellative Haar functions $h_{I^1}$ and $h_{I^{2,3}}$ we have
\begin{align*}
 & \ave{T(1\otimes 1_{J_1^{2,3}},1\otimes 1_{J_2^{2,3}}),h_{I^1} \otimes 1_{J_3^{2,3}}} = \ave{T_1^{*,j}(1\otimes 1_{J_1^{2,3}},1\otimes 1_{J_2^{2,3}}),h_{I^1} \otimes 1_{J_3^{2,3}}} \\
 &\quad =\ave{T(1_{I_1^1}\otimes 1, 1_{I_2^1}\otimes 1), 1_{I_3^1}\otimes h_{I^{2,3}}}=\ave{T_{2,3}^{*, j}(1_{I_1^1}\otimes 1, 1_{I_2^1}\otimes 1), 1_{I_3^1}\otimes h_{I^{2,3}}}=0
\end{align*}
for all the adjoints $j \in \{1,2\}$. 
We always assume that all bilinear Zygmund operators in this article satisfy this cancellation condition.
The intention of this condition is to guarantee that our operator is representable using cancellative shifts
only.

\subsection{Weak boundedness property}\label{sub:weak}
We say that $T$ satisfies the weak boundedness property if
$$
|\ave{T(1_I,1_I),1_I}| \lesssim |I|
$$
for all Zygmund rectangles $I = I^1\times I^2 \times I^3.$ 

\begin{defn}\label{defn:czz}
\begin{sloppypar}
We say that a bilinear operator $T$ is a paraproduct free Calder\'on-Zygmund operator adapted to Zygmund dilations (CZZ operator)
if $T$ has the full kernel representation, 
the partial kernel representations, is paraproduct free and satisfies the weak boundedness property.
\end{sloppypar}
\end{defn}

% Note that, in the Zygmund case, the coefficient equals to $|I_1^3|^3/|K|^2 = |I_1^3|^3/|K^3|^2 2^{k^1+k^2-k^3}.$

\section{Estimates for the shift coefficients}\label{sec:coefficientesti}
We now move to consider the shift coefficients that appeared in the decomposition of $T$ in Section \ref{sec:ZygDecoRefined}.
When $T$ is a CZZ operator, we can estimate them.
Without loss of generality, we estimate
$$
\ave{T(h_{I\dot+n_1}^0, h_{I\dot+n_2}^{0}), h_{I,Z}}
$$
for $I \in \calD_Z$ and different values of $n_1, n_2 \in \Z^3$, and without loss of generality we assume $\theta=\tilde \theta<1$.
The coefficients related to the other terms of the decomposition (other than the main term \eqref{eq:key}) may have a different set of Haar functions,
but they are treated similarly.

We show that 
\begin{equation}\label{eq:coefficientesti}
|\ave{T(h_{I\dot+n_1}^0, h_{I\dot+n_2}^{0}), h_{I,Z}}| \lesssim (|k|+1)^2\varphi(k) \frac{|I|^\frac{3}{2}}{|K|^2},
\end{equation}
where 
 $$
 \varphi(k) := 2^{-k^1\alpha_1 - k^2\min\{\alpha_{23},\theta\}- \max\{k^3-k^1-k^2,0\}\theta}.
 $$
For terms of this particular form, we would not actually need to analyze some of the diagonal cases
(see Section \ref{sec:ZygDecoRefined}). However, these diagonal terms would appear in some other forms, so it makes sense to deal
with them here (even though in the real situation the Haar functions might be permuted differently, this does not matter,
and the calculations we present apply). It is very helpful to study the linear case \cite{HLMV-ZYG}, since the kernel estimates
are relatively involved and we will not repeat every detail when they are similar. 

Let $m^i := \max_{j = 1,2}  |n_j^i|.$
The analysis of the coefficients splits into combinations of  \[\begin{cases}m^{1}  \in (2^{k^1-3},2^{k^1-2}],\qquad  k^1=3,4,\ldots, &\text{(Separated)}\\
m^{1}  = 1, &\text{(Adjacent)}\\
m^{1}  = 0, &\text{(Identical)}\end{cases}\]% is either separated $\in (2^{k-2},2^{k-3}], k=3,4,\ldots,$ adje
and
\[
\begin{cases}  m^i \in (2^{k^i-3},2^{k^i-2}],\qquad i=2,3,\, k^i=3,4,\ldots, &\text{(Separated)}\\
 m^2 < 2 \text{ and }  m^3 \in (2^{k^3-3},2^{k^3-2}],\qquad  k^3=3,4,\ldots, &\text{(Separated)}\\
 m^2 \in (2^{k^2-3},2^{k^2-2}] \text{ and }  m^3<2 \qquad  k^2=3,4,\ldots,&\text{(Separated)} \\
 m^2 = 1 \text{ and } m^3\le 1 &\text{(Adjacent)}\\
  m^2 = 0 \text{ and } m^3 =1 &\text{(Adjacent)}\\
  m^2 = 0 = m^3. &\text{(Identical)}\end{cases}
\]
%By symmetry, it is enough to consider that $m^i = n_1^i, i=1,2,3$ in every case. 

%Not all cases appears in the particular form of the term, but it really does not affect the analysis and we may proceed to consider this in more general sense. 
It is enough to consider $m^i = n_1^i$ since the case $m^i =  n_2^i$ is symmetrical.
We will not go through explicitly every combination -- rather, we choose some illustrative examples.

\subsubsection*{Separated/Separated}
We begin with the case $|n_1^{i}|\ge 2$ for all $i =1,2,3.$ %and $\min(n_1^2,n_1^3) \ge 2.$
Hence, \[|x_i - z_i| \ge |n_1^i| \ell(I^i)  \ge 2^{k^i-3} \ell(I^i)$$ and $$|x_i - z_i| \le |n_1^i| \ell(I^i)+ 2\ell(I^i) \le 2^{k^i-1}\ell(I^i)\] for $i = 1,2,3.$ Moreover,  $|x_i - z_i| \ge |y_i -z_i| /2\ge 0$ for $i = 1,2,3.$ 
Thus, we have the estimate
\begin{align*} 
\Big(&\frac{\prod_{i=1}^2 (|x_i - z_i| + |y_i - z_i|)}{(\abs{x_3 - z_3} + \abs{y_3 - z_3})}+\frac{|x_3 - z_3| + |y_3 - z_3|}{\prod_{i=1}^2(|x_i - z_i| + |y_i - z_i|)}\Big)^{-\theta} \\
&\sim \Big(\frac{\prod_{i=1}^2 |x_i - z_i|}{\abs{x_3 - z_3}}+\frac{\abs{x_3 - z_3}}{ \prod_{i=1}^2 |x_i - z_i|}\Big)^{-\theta}
\\
&\sim \Big(\frac{\prod_{i=1}^2 2^{k^i} \ell(I^i)}{2^{k^3} \ell(I^3)}+\frac{2^{k^3} \ell(I^3)}{ \prod_{i=1}^2  2^{k^i} \ell(I^i)}\Big)^{-\theta} = (2^{k^1+k^2-k^3}+ 2^{k^3-k^1-k^2} )^{-\theta}.
\end{align*}

Let \(c_{I^i}\) denote the center of the interval \(I^i\). Furthermore, notation like 
\(c_I\) then refers to the corresponding  tuple \((c_{I^{1}}, c_{I^2}, c_{I^3}).\)
Using the cancellation of the Haar function we then have
\begin{align*}
&\Big|\iiint K(x,y,z) h_{I\dot+n_1}^0(x) h_{I\dot+n_2}^{0}(y) h_{I,Z}(z) \ud x \ud y \ud z\Big|\\
&= \Big|\iiint \Big(K(x,y,z) - K(x,y,(c_{I^1}, z_{2,3})) -K(x,y,(z_1, c_{I^{2,3}}))  + K(x,y,c_{I})\Big)\\ 
&\qquad \times h_{I\dot+n_1}^0(x) h_{I\dot+n_2}^{0}(y) h_{I,Z}(z) \ud x \ud y \ud z\Big|\\
&\lesssim \iiint 2^{-k^1 \alpha_1} (2^{-k^2}+2^{-k^3})^{ \alpha_{23}}  \frac{(2^{k^1+k^2-k^3}+ 2^{k^3-k^1-k^2} )^{-\theta}}{|K|^2} h_{I\dot+n_1}^0(x) h_{I\dot+n_2}^{0}(y) h_{I}^0(z) \ud x\ud y \ud z\\
&= 2^{-k^1 \alpha_1} (2^{-k^2}+2^{-k^3})^{ \alpha_{23}}  (2^{k^1+k^2-k^3}+ 2^{k^3-k^1-k^2} )^{-\theta} \frac{|I|^{\frac 32}}{|K|^2} \le \varphi(k)\frac{|I|^{\frac{3}{2}}}{|K|^2}.
\end{align*}

Let us then consider the case, where we have separation in the parameter $3$ but not in the parameter $2$ -- that is,
$|n^2_1| < 2\le |n^3_1|$. Then
\begin{align} \label{eq:splitting}
&\Big(\frac{\prod_{i=1}^2(|x_i - z_i| + |y_i - z_i|)}{|x_3 - z_3| + |y_3 - z_3|}+\frac{|x_3 - z_3| + |y_3 - z_3|}{\prod_{i=1}^2(|x_i - z_i| + |y_i - z_i|)}\Big)^{-\theta} \\
&\qquad\sim  \Big(\frac{|x_2 - z_2| + |y_2 - z_2|}{2^{k^3-k^1}|I^2|}+  \frac{2^{k^3-k^1}|I^2|}{|x_2 - z_2| + |y_2 - z_2|} \Big)^{-\theta}\nonumber\\
&\qquad\lesssim \Big(\frac{|x_2 - z_2|}{2^{k^3-k^1}|I^2|}+  \frac{2^{k^3-k^1}|I^2|}{|x_2 - z_2|} \Big)^{-\theta}+\Big(\frac{|y_2 - z_2|}{2^{k^3-k^1}|I^2|}+  \frac{2^{k^3-k^1}|I^2|}{|y_2 - z_2|} \Big)^{-\theta}, \nonumber
\end{align}
and so using the mixed estimates
\begin{align*}
&\Big|\iiint K(x,y,z) h_{I\dot+n_1}^0(x) h_{I\dot+n_2}^{0}(y) h_{I,Z}(z) \ud x \ud y \ud z\Big|\\
&= \Big|\iiint \Big(K(x,y,z) - K(x,y,(c_{I^1}, z_{2,3}))\Big) h_{I\dot+n_1}^0(x) h_{I\dot+n_2}^{0}(y) h_{I,Z}(z) \ud x \ud y \ud z\Big|\\
&\lesssim \iiint 2^{-k^1 \alpha_1}|K^1|^{-2}|K^3|^{-2} \frac{\Big(\frac{|x_2 - z_2| + |y_2 - z_2|}{2^{k^3-k^1}|I^2|}+  \frac{2^{k^3-k^1}|I^2|}{|x_2 - z_2| + |y_2 - z_2|} \Big)^{-\theta}}{\big(|x_2 - z_2| + |y_2 - z_2|\big)^2} \\&\hspace{7cm}\times h_{I\dot+n_1}^0(x) h_{I\dot+n_2}^{0}(y) h_{I}^0(z) \ud x \ud y \ud z\\
&= 2^{-k^1 \alpha_1}\frac{|I^1|^{\frac{3}{2}}|I^3|^\frac{3}{2}}{|K^1|^2|K^3|^{2}} \iiint  \frac{\Big(\frac{|x_2 - z_2| + |y_2 - z_2|}{2^{k^3-k^1}|I^2|}+  \frac{2^{k^3-k^1}|I^2|}{|x_2 - z_2| + |y_2 - z_2|} \Big)^{-\theta}}{\big(|x_2 - z_2| + |y_2 - z_2|\big)^2}\\
&\hspace{14em}\times h_{I^2\dot+n^2_1}^0(x_2) h_{I^2\dot+n^2_2}^0(y_2)  h_{I^2}^0(z_2) \ud x_2\ud y_2 \ud z_2\\
&\lesssim \varphi(k)\frac{|I|^{\frac{3}{2}}}{|K|^2}.
%&= 2^{-k^1 \alpha_1} (2^{-k^2}+2^{-k^3})^{ \alpha_{23}}  (2^{k^1+k^2-k^3}+ 2^{k^3-k^1-k^2} )^{-\theta} \frac{|I|^{\frac 32}}{|K|^2}
\end{align*}
We note that the last inequality requires a case study (see also~\cite{HLMV-ZYG}*{Lemma 8.5}) 
and we used the standard estimate
\begin{align}\label{eq:biliKer}
\int_{\R^d} \frac{\ud u}{(r + |u_0 - u|)^{d + \alpha}} \lesssim r^{-\alpha}.
\end{align} 
Symmetrical estimates hold if $|n^2_1| \ge 2 > |n^3_1|$.

\subsubsection*{Adjacent/Separated}
We look at the example case $|n_1^2| \geq 2 > |n_1^3|$ and $|n_1^1| = 1.$ By the size estimate we have
\begin{align*}
  &|\ave{T(h_{I\dot+n_1}^0, h_{I\dot+n_2}^{0}), h_{I,Z}}| \\
  &\lesssim \frac{|I^2|^{3/2}}{|I^{1,3}|^{3/2}|K^2|^2} \iiint \frac{\Bigl(\frac{(|x_1 - z_1| + |y_1 - z_1|)2^{k^2}\ell(I^2)}{|x_3 - z_3| + |y_3 - z_3|} + \frac{|x_3 - z_3| + |y_3 - z_3|}{(|x_1 - z_1| + |y_1 - z_1|)2^{k^2}\ell(I^2)} \Bigr)^{-\theta}}{\big(|x_1 - z_1| + |y_1 - z_1|\big)^2 \big(|x_3 - z_3| + |y_3 - z_3|\big)^2}\\
  &\hspace{10em}\times 1_{I^{1,3} \dot+ n_1^{1,3}}(x_{1,3}) 1_{I^{1,3} \dot+ n_2^{1,3}}(y_{1,3})1_{I^{1,3}}(z_{1,3})\ud x_{1,3}\ud y_{1,3}\ud z_{1,3}.
\end{align*} 
Similarly as \eqref{eq:splitting}, we can split the integral into two terms. Then  by  \eqref{eq:biliKer} we reduce the problem to estimating 
\begin{align*}
&\iiint \frac{\Bigl(\frac{(|x_1 - z_1| + |y_1 - z_1|)2^{k^2}\ell(I^2)}{ |x_3 - z_3|} + \frac{ |x_3 - z_3|}{(|x_1 - z_1| + |y_1 - z_1|)2^{k^2}\ell(I^2)} \Bigr)^{-\theta}}{\big(|x_1 - z_1| + |y_1 - z_1|\big)^2   |x_3 - z_3|}\\
  &\hspace{10em}\times 1_{I^{1,3} \dot+ n_1^{1,3}}(x_{1,3}) 1_{I^{1} \dot+ n_2^{1}}(y_1)1_{I^{1,3}}(z_{1,3})\ud x_{1,3}\ud y_1\ud z_{1,3}\\
  &+ \iiint \frac{\Bigl(\frac{(|x_1 - z_1| + |y_1 - z_1|)2^{k^2}\ell(I^2)}{ |y_3 - z_3|} + \frac{ |y_3 - z_3|}{\big(|x_1 - z_1| + |y_1 - z_1|\big)2^{k^2}\ell(I^2)} \Bigr)^{-\theta}}{(|x_1 - z_1| + |y_1 - z_1|)^2   |y_3 - z_3|}\\
  &\hspace{10em}\times 1_{I^{1,3} \dot+ n_1^{1,3}}(x_{1,3}) 1_{I^{1,3} \dot+ n_2^{1,3}}(y_{1,3})1_{I^{1}}(z_1)\ud x_{1,3}\ud y_{1,3}\ud z_1.
\end{align*}
Since they are similar, we only bound the first one. Note that 
\begin{align*}
&\Bigl(\frac{(|x_1 - z_1| + |y_1 - z_1|)2^{k^2}\ell(I^2)}{ |x_3 - z_3|} + \frac{ |x_3 - z_3|}{(|x_1 - z_1| + |y_1 - z_1|)2^{k^2}\ell(I^2)} \Bigr)^{-\theta} \\ &\qquad \qquad \times(|x_1 - z_1| + |y_1 - z_1|)^{-2}\\
& \le \Bigl(\frac{(|x_1 - z_1| + |y_1 - z_1|)2^{k^2}\ell(I^2)}{ |x_3 - z_3|}  \Bigr)^{-\theta}(|x_1 - z_1| + |y_1 - z_1|)^{-2}\chi_{\{|x_1-z_1|2^{k^2}\ell(I^2)\ge |x_3-z_3|\}}\\
& + \Bigl(\frac{ |x_3 - z_3|} {(|x_1 - z_1| + |y_1 - z_1|)2^{k^2}\ell(I^2)} \Bigr)^{-\theta}(|x_1 - z_1| + |y_1 - z_1|)^{-2}\chi_{\{|x_1-z_1|2^{k^2}\ell(I^2)< |x_3-z_3|\}}.
\end{align*}
Then apply \eqref{eq:biliKer} to the integral over $y_1$, then by following the linear case \cite{HLMV-ZYG}*{Lemma 8.11} we get that the above integral is bounded by $|I^{1,3}|   k^2 2^{-k^2\theta}.$ Thus, we get  
\begin{align*}
  &|\ave{T(h_{I\dot+n_1}^0, h_{I\dot+n_2}^{0}), h_{I,Z}}| \lesssim  \frac{|I^2|^{3/2}}{|I^{1,3}|^{1/2}|K^2|^2} k^2 2^{-k^2\theta}\lesssim k^2 \varphi(k) \frac{|I|^\frac{3}{2}}{|K|^2} .
\end{align*}

\subsubsection*{Adjacent/Adjacent}
We again have no major changes to the linear case but
in order to use the estimate 
\begin{equation}\label{eq:maximalOrigin}
\int_\R \frac{\Big(\frac{t}{|u|}+\frac{|u|}{t}\Big)^{-\theta}}{t|u|}|f(u)| \ud u \lesssim t^{-1} Mf(0) 
\end{equation}
we need to first use \eqref{eq:biliKer} repeatedly. For example, consider $|n_1^1| = 1$ and $|n^2_1| = 1, |n_1^3| \le 1.$
By the size estimate of the kernel, we need to control 
\[ \frac{\Big(\frac{\prod_{i=1}^2(|x_i - z_i| + |y_i - z_i|)}{|x_3 - z_3| + |y_3 - z_3|}+\frac{|x_3 - z_3| + |y_3 - z_3|}{\prod_{i=1}^2(|x_i - z_i| + |y_i - z_i|)}\Big)^{-\theta}}{\prod_{i=1}^3\big(|x_i - z_i| + |y_i - z_i|\big)^2} h_{I\dot+n_1}^0(x) h_{I\dot+n_2}^{0}(y) h_{I}^0(z). %\ud x \ud y \ud z.
\] As before, we split this into two terms, one of them is 
\[
\frac{\Big(\frac{\prod_{i=1}^2(|x_i - z_i| + |y_i - z_i|)}{ \abs{x_3 - z_3}}+\frac{ \abs{x_3 - z_3}}{\prod_{i=1}^2(|x_i - z_i| + |y_i - z_i|)}\Big)^{-\theta}}{\prod_{i=1}^3\big(|x_i - z_i| + |y_i - z_i|\big)^2} h_{I\dot+n_1}^0(x) h_{I\dot+n_2}^{0}(y) h_{I}^0(z). %\ud x.
\]
We then apply \eqref{eq:biliKer} to the integral over $y_3$, and then use the previous trick repeatedly. That is, we write 
\begin{align*}
&\Big(\frac{\prod_{i=1}^2(|x_i - z_i| + |y_i - z_i|)}{ \abs{x_3 - z_3}}+\frac{ \abs{x_3 - z_3}}{\prod_{i=1}^2(|x_i - z_i| + |y_i - z_i|)}\Big)^{-\theta}\\
&\qquad \le \Big(\frac{\prod_{i=1}^2(|x_i - z_i| + |y_i - z_i|)}{ \abs{x_3 - z_3}}\Big)^{-\theta}\chi_{\{|x_1-z_1| (|x_2 - z_2| + |y_2 - z_2|) \ge |x_3-z_3|\}}\\
&\qquad \quad+ \Big(\frac{ \abs{x_3 - z_3}}{\prod_{i=1}^2(|x_i - z_i| + |y_i - z_i|)}\Big)^{-\theta}\chi_{\{|x_1-z_1|(|x_2 - z_2| + |y_2 - z_2|) < |x_3-z_3|\}}
\end{align*}
and apply~\eqref{eq:biliKer} to the integral over $y_1$. Then, after a similar argument on $y_2$, we finally arrive at 
\begin{align*}
& \frac 1{|I|^{\frac 12}}\iint \frac{\Big(\frac{\prod_{i=1}^2 |x_i - z_i|}{\abs{x_3 - z_3}}+\frac{\abs{x_3 - z_3}}{\prod_{i=1}^2 |x_i - z_i|}\Big)^{-\theta}}{\prod_{i=1}^3 |x_i - z_i|} h_{I\dot+n_1}^0(x)  h_{I}^0(z) \ud x \ud z \\
&\lesssim \frac{1}{|I|^{\frac{1}{2}}} \lesssim \frac{|I|^\frac{3}{2}}{|K|^2}.
\end{align*}

\subsubsection*{Adjacent/Identical}
We consider the case $|n_1^1| = 1$ and $n_j^2 = n_j^3 = 0, j= 1,2.$
We write 
\begin{align*}
  \sum_{Q_1^{2,3},Q_2^{2,3}, Q_3^{2,3} \in \ch(I^{2,3})} \ave{T(h_{I\dot+n_1}^0 1_{Q_1^{2,3}}, h_{I\dot+n_2}^{0} 1_{Q_2^{2,3}}), h_{I,Z} 1_{Q_3^{2,3}}}.
\end{align*}
It is enough to consider $Q_1^{2,3} = Q_2^{2,3} = Q_3^{2,3}$ since otherwise we have adjacent intervals, and we are back in the Adjacent/Adjacent case. 
Hence, the partial kernel representation \ref{subsec:partial} yields that
\begin{align*}
  &\Big|\pm |I^{2,3}|^{-\frac{3}{2}}\iiint K_{Q_1^{2,3}} h_{I^1\dot+n_1^1}^0  h_{I^1\dot+n_2^1}^{0} h_{I^1} \Big| \\
  &\lesssim \frac{|I^{2,3}|^{\frac 32}}{|K^{2,3}|^2} \iiint \frac{1}{(|x_1 - z_1| + |y_1-z_1|)^2} h_{I^1\dot+n_1^1}^0(x_1)  h_{I^1\dot+n_2^1}^{0}(y_1) h_{I^1}(z_1) \ud x_1\ud y_1 \ud z_1. 
\end{align*}
Then, first using \eqref{eq:biliKer} and then standard integration methods we get the following inequality
\begin{align*}
  &\iiint \frac{1}{(|x_1 - z_1| + |y_1-z_1|)^2} h_{I^1\dot+n_1^1}^0(x_1)  h_{I^1\dot+n_2^1}^{0}(y_1) h_{I^1}(z_1) \ud x_1 \ud y_1 \ud z_1 \\
  &\lesssim \frac{1}{|I^1|^{\frac 12} } \iint \frac{1}{|x_1 - z_1|} h_{I^1\dot+n_1^1}^0(x_1) h_{I^1}(z_1) \ud x_1 \ud z_1 \\
  &\lesssim \frac{1}{|I^1|^{\frac 12}} \sim \frac{|I^1|^{\frac 32} }{|K^1|^{2}}  
\end{align*}
as desired.
\subsubsection*{Identical/Identical}
Just like in above we split the pairing to
\begin{align*}
  \sum_{Q_1^{1},Q_2^{1}, Q_3^{1} \in \ch(I^{1})} &\sum_{Q_1^{2,3},Q_2^{2,3}, Q_3^{2,3} \in \ch(I^{2,3})} \\
  &\ave{T(h_{I\dot+n_1}^0 (1_{Q_1^1} \otimes 1_{Q_1^{2,3}}), h_{I\dot+n_2}^{0}(1_{Q_2^1} \otimes 1_{Q_2^{2,3}})), h_{I,Z} (1_{Q_3^1} \otimes1_{Q_3^{2,3}})}.
  \end{align*}
  The cases when $Q_i^1 \neq Q_j^1$ for some $i,j=1,2,3, i\neq j$ are essentially included in the cases of the two previous subsections. Hence, we consider $Q_1^1 = Q_2^1 = Q_3^1.$ Then there are two cases left, that is, either $Q_i^{2,3} \neq Q_j^{2,3}$ for some $i,j=1,2,3, i\neq j$, or $Q_1^{2,3} = Q_2^{2,3} = Q_3^{2,3}.$ Beginning from the latter one, similarly as in \cite{HLMV-ZYG}, by splitting $Q_1^3$ into sub-intervals we get
  \[
    |\ave{T(1_{Q_1^1} \otimes 1_{Q_1^{2,3}}, 1_{Q_1^1} \otimes 1_{Q_1^{2,3}}), 1_{Q_1^1} \otimes 1_{Q_1^{2,3}}}| \lesssim |Q_1^1||Q_1^{2,3}|
  \]
  by the weak boundedness property \ref{sub:weak} and the Identical/Adjacent case. Hence, we get the desired bound 
  \begin{align*}
    |\ave{T(h_{I\dot+n_1}^0 (1_{Q_1^1} \otimes 1_{Q_1^{2,3}}), h_{I\dot+n_2}^{0}(1_{Q_1^1} \otimes 1_{Q_1^{2,3}})), h_{I,Z} (1_{Q_1^1} \otimes1_{Q_1^{2,3}})}| \lesssim \frac{|Q_1|}{|I|^{\frac{3}{2}}} \leq \frac{|I|^{\frac{3}{2}}}{|K|^2}.
  \end{align*}

  We handle the remaining case $Q_i^{2,3} \neq Q_j^{2,3}$ for some $i,j=1,2,3, i\neq j.$ By the partial kernel representation and its size estimate we get
  \begin{align*}
    &\Big| \pm |I|^{-\frac{3}{2}}\iiint K_{Q_1^{1}} 1_{Q_1^{2,3}}1_{Q_2^{2,3}} 1_{Q_3^{2,3}} \Big|\\
  &\lesssim \frac{1}{|I^{1}|^{\frac{1}{2}}}\frac 1{|I^{2,3}|^{\frac 32}} \iiint\Big(\frac{ |I^1|(|x_2 - z_2| + |y_2 - z_2|)}{|x_3 - z_3| + |y_3 - z_3|}+\frac{|x_3 - z_3| + |y_3 - z_3|}{|I^1| (|x_2 - z_2| + |y_2 - z_2|)}\Big)^{-\theta}\\
  &\hspace{4em} \times \prod_{i=2}^3\frac{1}{\big(|x_i - z_i| + |y_i - z_i|\big)^2}1_{Q_1^{2,3}}1_{Q_2^{2,3}} 1_{Q_3^{2,3}} \ud x_{2,3} \ud y_{2,3} \ud z_{2,3}.
\end{align*}
Then using similar arguments as in the Adjacent/Adjacent case and \eqref{eq:maximalOrigin} gives us the desired bound.

\section{Structural decomposition of Zygmund shifts}\label{sec:structural}
In this section we decompose the bilinear Zygmund shifts (see Section \ref{subsec:shifts}) as a sum of operators with simpler
cancellation properties. The decomposition is not optimal (in the sense that weighted estimates with Zygmund weights 
cannot be obtained via this) -- however, it is sufficient for unweighted boundedness in the full range that we later obtain via tri-parameter theory.
Recall that $k = (k^1, k^2,k^3)$ is the complexity of the bilinear Zygmund shift. 
%Let us make first some observations. Essentially we are making bi-parameter martingale differences, that is,
%\begin{align*}
%  &\ave{f}_{I^1 \times I^{2,3}} - \ave{f}_{K^1 \times I^{2,3}} - \ave{f}_{I^1 \times K^{2,3}} + \ave{f}_{K^1 \times K^{2,3}}\\ 
%  &= \sum_{i_1 = 0}^{k^1}\sum_{i_2 = 0}^{k^{2,3}} \ave{f, h_{(I^1)}}
%\end{align*}

\begin{defn}\label{def:zygshift}
Bilinear operators of the form 
  \begin{equation}\label{eq:zygshift}
    S_{(l_1,l_2,l_3)}(f_1, f_2)= \sum_{L \in \calD_\lambda} \sum_{I_j^{(\ell_j)}=L} a_{L, (I_j)}\langle f_1, h_{I_1^1}\otimes h_{I_1^{2,3}}^0\rangle  \langle f_2, h_{I_2^1}^0\otimes h_{I_2^{2,3}}\rangle  h_{I_3},
     \end{equation}
     where $\lambda = 2^n, n\in \Z, |n|\le 3\max(k^i)$ and  \[|a_{L,(I_j)}|\le \frac{|I_j|^{\frac 32}}{|L|^{2}},\] are tri-parameter bilinear shifts of Zygmund nature if at least one rectangle $I_{i_1}^1 \times I_{i_2}^{2,3}, i_1 = 1,3, i_2 = 2,3$ is a Zygmund rectangle and 
     %Note that we do not have cancellation only for  Zygmund rectangles, but still, they share a `weak' Zygmund structure. To be precise, we have
     \begin{enumerate}
     \item $\ell_j^i\le k^i$ for all $i, j=1,2,3$;
     \item $(\ell_j^3- \ell_j^2)_+ \le (k^3-k^2)_+$ for all $j=1,2,3$.
     \end{enumerate}
     Moreover, any adjoint $$
     S_{(l_1,l_2,l_3)}^{j_1^*,j_{2,3}^*},\qquad j_1,j_{2,3}\in \{0,1,2\},
     $$
     is also considered to be a tri-parameter bilinear shift of Zygmund nature. Here, the adjoint $j_{2,3}^{*}$ means that, for example, in case $j_{2,3} = 1$ functions $h_{I_1^{2,3}}^0$ and $h_{I_3^{2,3}}$ switch places. 
  \end{defn}
Note that these operators share a `weaker' Zygmund structure. Ideally, we would want to have $I_3 \in \calD_Z$ and $I_1^1 \times I_2^{2,3} \in \calD_Z.$ 

\begin{prop}\label{prop:sumofshifts}
Let $Q_k, k= (k^1,k^2,k^3),$ be a bilinear Zygmund shift operator as defined in Section \ref{subsec:shifts}. Then
\[Q_k = C \sum_{u=1}^c \sum_{l^1 = 0}^{k^1 -1} \sum_{l^{2,3} = 0}^{k^{2,3} - 1}    S^{u},\]
where $S^{u}$ is a bilinear operator as in Definition \ref{def:zygshift} with complexity depending on $l$ and $k$,
and 
\[
\sum_{l^{2,3} = 0}^{k^{2,3} - 1}:= \begin{cases}
    \sum\limits_{0\le l^2=l^3 \le k^2-1}   + \sum\limits_{\substack{l^2=k^2\\ k^2 \le l^3\le k^3-1}}, &\text{if } k^3 \ge k^2 \\
    \sum\limits_{0\le l^2=l^3 \le k^3-1}   + \sum\limits_{\substack{k^3\le l^2\le k^2-1\\ l^3=k^3}}, &\text{if } k^3 < k^2.
  \end{cases}
\]
\end{prop}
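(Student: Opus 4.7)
The strategy is to exploit the tensor-product cancellation structure of the four-term bracket inside the definition of $Q_k$. Writing the bracket as $T_1 - T_2 - T_3 + T_4$ for the four $A^{j_1,j_2}$ summands, this combination is $(\operatorname{Id}-P^1_{j_1})(\operatorname{Id}-P^{2,3}_{j_2})$ applied to $A^{j_1,j_2}_{I_1,I_2,I_3}$, where $P^1_{j_1}$ replaces $I_j^1$ by $I_{j_1}^1$ for $j \ne j_1$ and $P^{2,3}_{j_2}$ does the analogous replacement in the $(2,3)$-slot. The plan is to expand each of these two factors via telescoping martingale identities, which introduces intermediate scales $l^1$ and $l^{2,3}$, and then to regroup the output as a finite sum of tri-parameter shifts of Zygmund nature in the sense of Definition \ref{def:zygshift}.

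For parameter $1$, I would apply the standard one-parameter telescoping: if $I_j^1$ and $I_{j_1}^1$ are two intervals of equal length with common $k^1$-th ancestor $K^1$, then $1_{I_j^1} - 1_{I_{j_1}^1}$ expands as $\sum_{l^1=0}^{k^1-1}$ of cancellative Haars supported on the intermediate ancestor $(I_j^1)^{(l^1)}$. Substituting this into $T_1 - T_2$ and $T_3 - T_4$ upgrades each non-cancellative factor $h^0_{I_j^1}$ (for $j\neq j_1$) to a sum over $l^1$ of cancellative Haars, while the Haar in the $j_1$-slot stays cancellative; this is the source of $\sum_{l^1=0}^{k^1-1}$ in the statement.

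The expansion in parameter $(2,3)$ is more delicate because the ambient lattice $\calD_Z$ constrains $I_j^{2,3}$ to sit on a one-parameter family of Zygmund scales. The telescoping chain from $I_j^{2,3}$ up to $K^{2,3}$ then has two phases: as long as both sides can be doubled simultaneously (so $l^2 = l^3 \le \min(k^2,k^3)$) we move along the Zygmund diagonal, and once the shorter side has exhausted its ancestors only the longer side continues to double, fixing one index at its maximum while the other runs. This is exactly the piecewise description of $\sum_{l^{2,3}=0}^{k^{2,3}-1}$ in the statement. Carrying out this $(2,3)$-telescoping jointly with the previous step on the full combination $T_1 - T_2 - T_3 + T_4$ converts each factor $h^0_{I_j^{2,3}}$ (for $j\neq j_2$) into cancellative Haars on intermediate Zygmund-compatible rectangles.

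Combining the two expansions rewrites the bracket as $\sum_{l^1}\sum_{l^{2,3}}$ of expressions of the form \eqref{eq:zygshift}. At this point one verifies: (i) the original bound $|a_{K,(I_j)}| \le |I_j|^{3/2}/|K|^2$ passes through telescoping, after absorbing an absolute constant into $C$; (ii) the complexity constraint $\ell_j^i \le k^i$ is automatic from the range of $l^1$ and $l^{2,3}$; (iii) the constraint $(\ell_j^3 - \ell_j^2)_+ \le (k^3-k^2)_+$ is precisely the content of the piecewise splitting in $\sum_{l^{2,3}}$; and (iv) the Zygmund-rectangle requirement in Definition \ref{def:zygshift} holds, since the Haars sitting in the $j_1$ and $j_2$ slots remain at the scales $I_{j_1}^1$ and $I_{j_2}^{2,3}$, which were extracted from a genuine Zygmund rectangle $I_{j_1}$ or $I_{j_2}$. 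The finite sum over $u = 1,\ldots,c$ and the overall constant $C$ absorb the bounded number of combinatorial configurations: the choice of $(j_1,j_2)$, the resulting adjoint form $S^{j_1^*,j_{2,3}^*}$, and the $\pm$ signs from the telescoping. The main obstacle I expect is precisely the parameter-$(2,3)$ telescoping: one must produce, at each intermediate scale, a rectangle belonging to one of the dilated lattices $\calD_\lambda$ with $\lambda = 2^n$, $|n| \le 3\max k^i$, and control its Zygmund ratio so that at least one $I_{i_1}^1 \times I_{i_2}^{2,3}$ remains genuinely Zygmund. The peculiar piecewise form of $\sum_{l^{2,3}}$ is forced by this geometric constraint, and checking (iv) is the combinatorial heart of the argument.
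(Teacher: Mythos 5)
There is a genuine gap in your telescoping step. You claim that substituting the expansion into $T_1-T_2$ "upgrades each non-cancellative factor $h^0_{I_j^1}$ (for $j\neq j_1$) to a sum over $l^1$ of cancellative Haars, while the Haar in the $j_1$-slot stays cancellative." This would leave all three slots cancellative in parameter $1$, but the target form \eqref{eq:zygshift} has exactly two cancellative slots per parameter group (e.g.\ $f_1$ and $f_3$ cancellative in parameter $1$, $f_2$ not; $f_2$ and $f_3$ cancellative in $(2,3)$, $f_1$ not). The bracket starts with one cancellation per parameter group (the $j_1$ resp.\ $j_2$ slot), and the decomposition gains exactly \emph{one} more per parameter group, not two: telescoping a product of averages uses the Leibniz rule
$\prod_j a_j - \prod_j b_j = \sum_m\bigl(\prod_{j<m}a_j\bigr)(a_m-b_m)\bigl(\prod_{j>m}b_j\bigr)$,
so at each step of the telescoping only \emph{one} of the two remaining factors receives a martingale difference while the other becomes an average over an intermediate cube. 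Your "both become cancellative" picture is inconsistent with the definition you are trying to reach and would also not be an identity (you would be dropping the constant/average contributions).

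The paper's actual proof makes the cross-term structure explicit. It first rewrites $\prod_{j=1}^2\langle f_j\rangle_{I_j}$ via the Abel-summation apparatus $D_{I,l}(j,j_0)$ (exactly one factor gets $P_K^k$, those before get $E_K$, those after get $\Id$), producing the terms $\Sigma^1_{m_1,m_2}$. Terms with $m_1,m_2\in\{1,2\}$ already have the shift form after expanding $P$; terms with $m_1=3$ or $m_2=3$ are paired against $\Sigma^2,\Sigma^3,\Sigma^4$ (coming from the other three summands of the bracket), and the residual difference $\prod_j\langle g_j\rangle_{K}-\prod_j\langle g_j\rangle_{I_3}$ is telescoped, again with the Leibniz rule, down the chain of intermediate ancestors. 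Your outline has no analogue of this grouping or of the Leibniz cross-terms, and you concede that items (iv) and the $(2,3)$-telescoping are "the combinatorial heart" without carrying them out — the paper resolves precisely these by the two-phase definition of $P_{K^{2,3}}^{k^{2,3}}$ (diagonal $l^2=l^3$ followed by the one-sided continuation) and by re-rooting the sum so that the new top cube is the intermediate ancestor, then checking the normalization $|c_\cdot|\lesssim|I_j|^{3/2}/|L|^2$ directly. As written, your argument does not yield the stated decomposition.
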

\begin{proof}
  The argument is similar in spirit to the purely bi-parameter decomposition in \cite{AMV}.
  For notational convenience, we consider a shift $Q_k$ of the particular form
  \begin{align*}
    &\ave{Q_{k}(f_1,f_2),f_3} \\
  &= \sum_{K \in \calD_{2^{-k^1-k^2+k^3}}}\sum_{\substack{I_1,I_2,I_3 \in \calD_{Z}\\ I_j^{(k)} = K}} a_{K,(I_j)} \Big[A^{3,3}_{I_1,I_2,I_3} - A^{3,3}_{I_{3}^1 \times I_1^{2,3},I_{3}^{1} \times I_2^{2,3}, I_{3}}\\
  &\hspace{10em} -A^{3,3}_{I_{1}^1 \times I_{3}^{2,3},I_{2}^{1} \times I_{3}^{2,3}, I_{3}} +  A^{3,3}_{I_{3},I_{3}, I_{3} } \Big] \\
  &=  \sum_{K \in \calD_{2^{-k^1-k^2+k^3}}}\sum_{\substack{I_1,I_2,I_3 \in \calD_{Z}\\ I_j^{(k)} = K}} a_{K,(I_j)}\ave{f_3,  h_{I_3}} \Big[ \langle f_1, h_{I_1}^0 \rangle \langle f_2, h_{I_2}^0 \rangle - \langle f_1, h_{I^1_3}^0 h_{I^{2,3}_1}^0 \rangle \langle f_2, h_{I^1_3}^0 h_{I^{2,3}_2}^0 \rangle \\
  &\hspace{5em}- \langle f_1, h_{I^1_1}^0 h_{I^{2,3}_3}^0 \rangle \langle f_2, h_{I^1_2}^0 h_{I^{2,3}_3}^0 \rangle 
  + \langle f_1, h_{I_3}^0  \rangle \langle f_2, h_{I_3}^0   \rangle \Big].
\end{align*}
There is no essential difference in the general case. Let us also use the usual abbreviation $\calD_{2^{-k^1-k^2+k^3}} = \calD_\lambda$.

We define
$$
b_{K, (I_j)}
=|I_1| a_{K, (I_j)}
$$
and
$$
B_{I_1, I_2, I_{3}}^{3, 3}
= \langle f_1 \rangle_{I_1} \ave{f_2}_{I_2} \langle f_{3}, h_{I_3} \rangle.
$$
We can write the shift $Q_k$ using these by replacing $a$ with $b$ and $A$ with $B$.

Recall the notation
\begin{align*}
  &\Delta_{K^1}^{l^1} f = \sum_{\substack{L^1 \in \calD^1\\ (L^1)^{(l^1)} = K^1}} \Delta_{L^1} f,\qquad
  P_{K^1}^{k^1} f= \sum_{l^1=0}^{k^1-1}\Delta_{K^1}^{l^1} f, \\
  &E_{K^1} f = \ave{f}_{K^1} 1_{K^1}, \qquad E_{K^1}^{k^1} f =  \sum_{\substack{L^1 \in \calD^1\\ (L^1)^{(k^1)} = K^1}}\ave{f}_{L^1} 1_{L^1}.
\end{align*}
%and 
%\begin{align*}
%  &\Delta_{K^{2,3}}^{(l^2,l^3)} f =  \sum_{\substack{L^{2,3} \in \calD^{2,3}_{\lambda}\\ (L^2)^{(l^2)} = K^2 \\ (L^{3})^{(l^3)} = K^3}} \Delta_{L^{2,3}} f.
%\end{align*}
Let us define
\begin{equation}
  P_{K^{2,3}}^{k^{2,3}} f:= \sum_{l^{2,3} = 0}^{k^{2,3}-1} \Delta_{K^{2,3}}^{(l^2,l^3)} f := \begin{cases}
    \sum\limits_{l^2 = 0}^{k^2-1} \Delta_{K^{2,3}}^{l^2} f + \sum\limits_{l^3 = k^2}^{k^3-1} E_{K^2}^{k^2}\Delta_{K^3}^{l^3} f, &\text{if } k^3 \ge k^2 \\
    \sum\limits_{l^3 = 0}^{k^3-1} \Delta_{K^{2,3}}^{l^3} f + \sum\limits_{l^2 = k^3}^{k^2-1} \Delta_{K^2}^{l^2}E_{K^3}^{k^3} f, &\text{if } k^3 < k^2,
  \end{cases}
\end{equation}
where we have the standard one-parameter definition \[\Delta_{K^{2,3}}^{l^i} f = \sum_{\substack{L^{2,3} \in \calD^{2,3}\\ (L^2)^{(l^i)} \times (L^3)^{(l^i)} = K^2 \times K^3 }} \Delta_{L^{2,3}}f.\]
We also use a similar shorthand for the expanded martingale blocks
\[\sum_{l^{2,3} = 0}^{k^{2,3}-1} \Delta_{K^{2,3}}^{(l^2,l^3)} f = \sum_{l^{2,3} = 0}^{k^{2,3}-1} \sum_{(L^{2,3})^{(l^{2,3})} = K^{2,3}} \ave{f,h_{L^{2,3}}} h_{L^{2,3}},\]
where we allow, for example, that $h_{L^{2,3}} = h_{L^2}^0 \otimes h_{L^3}$ when $k^3 > k^2$ and $l^2 = k^2.$ 
%and if $k^2 = k^3,$ we get the standard definition, where we have only one summation, for example, $l \in \{0,1,\ldots, k^2\}$.

Using this notation we define the following. For a cube $I$ and integers $l,j_0 \in \{1,2, \dots \}$ we define
\begin{equation}\label{eq:Dnot}
  D_{I,l}(j,j_0)=
  \begin{cases}
    E_I, \quad &\text{if } j \in \{1, \dots, j_0-1\}, \\ 
    P_{I}^{l}, \quad &\text{if } j=j_0, \\
    \Id, \quad &\text{if } j \in \{j_0+1,j_0+2, \dots\}, 
  \end{cases}
\end{equation}
where $\Id$ denotes the identity operator, and if we have a rectangle $I^{2,3}$ and a tuple $l^{2,3}$ we use the modified $P_{I^{2,3}}^{l^{2,3}}.$ %where $k^{2,3} - 1$ means that $l^2 \in \{0,1\ldots, k^2\}$ and $l^3 \in \{k^2+1, \ldots, k^3-1\}$ if $k^3 \ge k^2$ and symmetrically when $k^2>k^3.$ 
%Moreover, if $k^2 = k^3$ then we need only one summation until $k^2-1.$ 

Let $I_1, I_2, I_3$ be as in the summation of $Q_k$. We use the above notation in parameter one $D_{I^1,l^1}(j,j_0)$ and for the other two parameters we use $D_{I^{2,3},l^{2,3}}(j,j_0)$.
Thus, expanding to the martingale blocks leads us to 
\begin{equation*}
  \begin{split}
    &B_{I_1, I_2, I_{3}}^{3, 3}\\
    &=\sum_{m_1,m_2=1}^{3}
    \prod_{j=1}^{2} \langle D^1_{K^1,k^1}(j,m_1)D^{2,3}_{K^{2,3},k^{2,3}}(j,m_2)f_j \rangle_{I_j} \langle f_{3}, h_{I_{3}} \rangle.
  \end{split}
\end{equation*}
Hence, we may write 
$$
\sum_{K \in \calD_\lambda}\sum_{\substack{I_1,I_2,I_3 \in \calD_{Z}\\ I_j^{(k)} = K}}B_{I_1, I_2, I_{3}}^{3, 3}
=:\sum_{m_1,m_2=1}^{3}\Sigma_{m_1,m_2}^1.
$$
Also, we have that 
\begin{equation*}
  B_{I_{3}^1 \times I_1^{2,3}, I_{3}^1 \times I_{2}^{2,3},  I_{3}^1 \times I_3^{2,3} }^{3,3}
  =\sum_{m_2=1}^{3}
  \prod_{j=1}^{2} \langle D^{2,3}_{K^{2,3},k^{2,3}}(j,m_2)f_j \rangle_{I_3^1\times I_j^{2,3}} \langle f_{3}, h_{I_{3}} \rangle
\end{equation*}
and
\begin{equation*}
  B_{I_{1}^1 \times I_3^{2,3}, I_{2}^1 \times I_{3}^{2,3},  I_{3}^1 \times I_3^{2,3} }^{3,3}
  =\sum_{m_1=1}^{3}
  \prod_{j=1}^{2} \langle D^1_{K^1,k_1}(j,m_1) f_j \rangle_{I_j^1\times I_3^{2,3}} \langle f_{3}, h_{I_{3}} \rangle,
\end{equation*}
which gives that
$$
\sum_{K \in \calD_\lambda}\sum_{\substack{I_1,I_2,I_3 \in \calD_{Z}\\ I_j^{(k)} = K}}
B_{I_{3}^1 \times I_1^{2,3}, I_{3}^1 \times I_{2}^{2,3},  I_{3}^1 \times I_3^{2,3} }^{3,3}
=: \sum_{m_2=1}^{3} \Sigma_{m_2}^2
$$
and 
$$
\sum_{K \in \calD_\lambda}\sum_{\substack{I_1,I_2,I_3 \in \calD_{Z}\\ I_j^{(k)} = K}}
B_{I_{1}^1 \times I_3^{2,3}, I_{2}^1 \times I_{3}^{2,3},  I_{3}^1 \times I_3^{2,3} }^{3,3}
=: \sum_{m_1=1}^{3} \Sigma^3_{m_1}.
$$
Finally, we just set
$$
\sum_{K \in \calD_\lambda}\sum_{\substack{I_1,I_2,I_3 \in \calD_{Z}\\ I_j^{(k)} = K}}
B_{I_{3}, I_{3},  I_{3}}^{3,3}=: \Sigma^4.
$$

Thus, we have the following decomposition
\begin{equation*}
  \begin{split}
    \langle Q_{k}(f_1,f_2), f_{3}\rangle
    &= \sum_{m_1,m_2=1}^2 \Sigma^1_{m_1,m_2}
    + \sum_{m_2=1}^2 (\Sigma^1_{3,m_2}-\Sigma^2_{m_2})\\
    &+\sum_{m_1=1}^2 (\Sigma^1_{m_1,3}-\Sigma^3_{m_1})
    + (\Sigma^1_{3,3}-\Sigma^2_{3}-\Sigma^3_{3}+\Sigma^4).
  \end{split}
\end{equation*}

First, we take one $\Sigma^1_{m_1,m_2}$ with $m_1,m_2 \in \{1,2\}$. For notational convenience,
we choose the case $m_1=m_2= 2$. Recall that
$$
\Sigma^1_{2,2}
=\sum_{K \in \calD_\lambda}\sum_{\substack{I_1,I_2,I_3 \in \calD_{Z}\\ I_j^{(k)} = K}}
b_{K,(I_j)} \langle f_1 \rangle_K \langle P_{K^1}^{k^1} P_{K^{2,3}}^{k^{2,3}} f_2 \rangle_{I_2} 
\langle f_{3}, h_{I_{3}} \rangle.
$$
We expand
$$
\langle P_{K^1}^{k^1} P_{K^{2,3}}^{k^{2,3}} f_2 \rangle_{I_2}
= \sum_{l^1=0}^{k^1-1} \sum_{l^{2,3}=0}^{k^{2,3}-1} \sum_{\substack{(L^1)^{(l^1)}=K^1\\ (L^{2,3})^{(l^{2,3})} = K^{2,3}}}
\langle f_2 , h_{L^1} \otimes h_{L^{2,3}} \rangle \langle h_{L^1} \otimes h_{L^{2,3}} \rangle_{I_2}
$$
and note that $L$ is not necessarily a Zygmund rectangle. 
It holds that
\begin{equation*}
  \begin{split}
    \Sigma^1_{2,2}
    = \sum_{l^1=0}^{k^1-1} \sum_{l^{2,3}=0}^{k^{2,3}-1} \sum_{K \in \calD_\lambda}
    \sum_{L^{(l^1,l^2,l^3)}=K}
    &\sum_{\substack{I_3 \in \calD_{Z}\\ I_3^{(k)} = K}}
    \Big(\sum_{\substack{I_1 \\ I_1^{(k)} = K }}\sum_{\substack{I_2 \subset L \\ I_2^{(k)}=K}} 
    \frac{b_{K,(I_j)} \langle h_{L} \rangle_{I_2} }
    {|K|^{\frac 12}}\Big) \\
    & \langle f_1, h^0_K \rangle 
    \langle f_2 , h_{L} \rangle
    \langle f_{3}, h_{I_{3}} \rangle.
  \end{split}
\end{equation*}
Now, since we can easily check that
$$
\Big|\sum_{\substack{I_1 \\ I_1^{(k)} = K}}\sum_{\substack{ I_2 \subset L \\ I_2^{(k)}=K}} 
\frac{b_{K,(I_j)} \langle h_{L} \rangle_{I_2} }
{|K|^{\frac 12}}\Big|
\le \frac{|K|^{\frac 12} |L|^{\frac 12} |I_{3}|^{\frac 12}}{|K|^2},
$$
we get a sum of operators we wanted 
$$
\Sigma^1_{2,2}
=\sum_{l^1=0}^{k^1-1} \sum_{l^{2,3}=0}^{k^{2,3}-1}
\langle S_{(0,(l^1,l^2,l^3), k)}(f_1, f_2),f_{3} \rangle,
$$
where $S_{(0,(l^1,l^2,l^3), k)}$ is a type of the shift \eqref{eq:zygshift}.
The general case $\Sigma^1_{m_1,m_2}$ is analogous.

We turn to the terms $\Sigma^1_{3,m_2}-\Sigma^2_{m_2}$.
Let us take, for example, the case $m_2 = 1$. After expanding $P^{k^{2,3}}_{K^{2,3}}$ in the first slot,
$\Sigma^1_{3,1}-\Sigma^2_{1}$ can be written as
\begin{equation*}
  \begin{split}
    \sum_{l^{2,3}=0}^{k^{2,3}-1} \sum_{K \in \calD_\lambda} \sum_{(L^{2,3})^{(l^{2,3})}= K^{2,3}}  
    \sum_{\substack{I_1, I_2,I_3 \\ I_j^{(k)} = K }} 
    b_{K,(I_j)} \langle h_{L^{2,3}} \rangle_{I^{2,3}_{1}}
    \Big[ 
      \Big\langle f_{1}, \frac{1_{K^1}}{|K_1|} \otimes h_{L^{2,3}} \Big\rangle \langle f_2 \rangle_{K^1 \times I^{2,3}_2}  \\ 
     - 
      \Big\langle f_{1}, \frac{1_{I^1_{3}}}{|I^1_{3}|} \otimes h_{L^{2,3}} \Big\rangle
      \langle f_2 \rangle_{I^1_{3} \times I^{2,3}_2} \Big] \langle f_{3}, h_{I_{3}} \rangle.
    \end{split}
  \end{equation*}
  %This splits the difference $\Sigma^1_{n+1,m_2}-\Sigma^2_{m_2}$ as
  %$$
  %\Sigma^1_{n+1,m_2}-\Sigma^2_{m_2}
  %=:\sum_{i_2=0}^{k_2-1} \Sigma^{1,2}_{m_2,i_2}.
  %$$
  For the moment, we fix one $l^{2,3}$ and write $g_1 = \ave{f_1, h_{L^2}}$ and $g_2 = \ave{f_2}_{I_2^{2,3}}$.
  %Let $g_j^{m_2}:=g_j= \langle f_j \rangle^2_{K^2}$ for 
  %$j \in \{1, \dots, m_2-1\}$, $g_{m_2}^{m_2}:=g_{m_2}= \langle f_{m_2}, h_{L^2} \rangle_2 $
  %and $g_j^{m_2}:=g_j= \langle f_j \rangle^2_{I^2_j}$ for $j \in \{m_2+1, \dots, n\}$. 
  %Using this notation we have that the term inside the brackets is 
  %$
  %\prod_{j=1}^n \langle g_j \rangle_{K^1}-\prod_{j=1}^n \langle g_j \rangle_{I^1_{n+1}}.
  %$
  We write inside the brackets
  \begin{equation*}
    \prod_{j=1}^2 \langle g_j \rangle_{K^1}-\prod_{j=1}^2 \langle g_j \rangle_{I^1_{3}}
    =-\sum_{l^1=0}^{k^1-1}\Big(\prod_{j=1}^2 \langle g_j \rangle_{(I^1_{3})^{(l^1)}}-\prod_{j=1}^2 \langle g_j \rangle_{(I^1_{3})^{(l^1 + 1)}}\Big)
  \end{equation*}
and then expand $\prod_{j=1}^2 \langle g_j \rangle_{(I^1_{3})^{(l^1)}}-\prod_{j=1}^2 \langle g_j \rangle_{(I^1_{3})^{(l^1+1)}}$
  as
  \begin{equation*}
    \langle \Delta_{(I^1_{3})^{(l^1+1)}} g_{1}\rangle_{I^1_{3}} \langle g_2 \rangle_{(I^1_{3})^{(l^1)}}  +  \langle g_1 \rangle_{(I^1_{3})^{(l^1+1)}} \langle \Delta_{(I^1_{3})^{(l^1+1)}} g_{2}\rangle_{I^1_{3}}.
  \end{equation*}
  We get
  \begin{align*}
    &\prod_{j=1}^2 \langle g_j \rangle_{K^1}-\prod_{j=1}^2 \langle g_j \rangle_{I^1_{3}} \\
    &=
    -\sum_{l^1=0}^{k^1-1} \Big(\langle \Delta_{(I^1_{3})^{(l^1+1)}} g_{1}\rangle_{I^1_{3}} \langle g_2 \rangle_{(I^1_{3})^{(l^1)}}  +  \langle g_1 \rangle_{(I^1_{3})^{(l^1+1)}} \langle \Delta_{(I^1_{3})^{(l^1+1)}} g_{2}\rangle_{I^1_{3}}\Big),
  \end{align*}
  where we can expand
  \[\langle \Delta_{(I^1_{3})^{(l^1+1)}} g_{j}\rangle_{I^1_{3}} = \ave{g_j, h_{(I^1_3)^{(l^1+1)}}} \ave{h_{(I^1_3)^{(l^1+1)}}}_{I_3^1}.\]
  %This identity splits $\Sigma^{1,2}_{m_2,i_2}$ further as 
  %$\Sigma^{1,2}_{m_2,i_2}
 % =: -\sum_{i_1=0}^{k_1-1} \sum_{m_1=1}^n \Sigma^{1,2}_{m_1,m_2,i_1,i_2}$.
  
  For fixed $l^1$ and $l^{2,3}$ the expansion leads to the term
  \begin{equation*}
    \begin{split}
      \sum_{K \in \calD_\lambda} \sum_{(L^{2,3})^{(l^{2,3})}= K^{2,3}}  
    \sum_{\substack{I_1, I_2,I_3 \\ I_j^{(k)} = K }} 
    b_{K,(I_j)} \langle h_{(I^1_{3})^{(l^1+1)}} \otimes h_{L^{2,3}} \rangle_{I^{1}_{3} \times I^{2,3}_{1}}\\ 
      \Big\langle f_{1}, h_{(I^1_3)^{(l^1+1)}} \otimes h_{L^{2,3}} \Big\rangle \langle f_2 \rangle_{(I^1_3)^{(l^1)} \times I^{2,3}_2}  \langle f_{3}, h_{I_{3}} \rangle,
    \end{split}
  \end{equation*}
  and to the symmetrical one, where the cancellation $h_{(I^1_3)^{(l^1 + 1)}}$ is paired with the second function and $f_1$ is averaged over $(I_3^1)^{(l^1+1)}.$ 
  Again, we want to reorganize the summations and verify the correct normalization for the shifts of the form \eqref{eq:zygshift}.
  In the first parameter we will now take $(I^1_{3})^{(l^1+1)}$ as the new top cube, that is, 
  \begin{equation}\label{eq:shift1}
    \begin{split}
      &\sum_{K^1}\sum_{(L^1)^{(k^1-l^1)}=K^1} 
      \sum_{K^{2,3}\in \calD_{2^{-l^1-k^2+k^3} \ell(L^1)}}\sum_{(I_{3}^1)^{(l^1)}=L^1} \\
      &\sum_{(L^{2,3})^{(l^{2,3})}=K^{2,3}}
      \sum_{\substack{I^{2,3}_2,I^{2,3}_3 \\(I^{i}_j)^{(k^{i})}=K^{i}}} 
      c_{K^1,L^1,I^1_{3}, K^{2,3}, L^{2,3}, I^{2,3}_{2}, I^{2,3}_{3}}  
      \Big\langle f_{1}, h_{(L^1)^{(1)}} \otimes h_{L^{2,3}} 
      \Big\rangle \langle f_2 \rangle_{ L^1\times I^{2,3}_2}  \langle f_{3}, h_{I_{3}} \rangle,
    \end{split}
\end{equation}
where 
\begin{equation*}
  \begin{split}
    &c_{K^1,L^1,I^1_{3}, K^{2,3}, L^{2,3}, I^{2,3}_{2}, I^{2,3}_{3}}  \\
    &= \sum_{\substack{I^1_1, I^1_{2} \\ (I^1_j)^{(k^1)}=K^1}}
    \sum_{\substack{ I^{2,3}_{1} \subset L^{2,3} \\ (I^{i}_{1})^{(k^{i})}=K^{i} }} 
    b_{K, (I_j)}\langle h_{(L^1)^{(1)} \times L^{2,3}} \rangle_{I^1_{3}\times I^{2,3}_{1}}.
  \end{split}
\end{equation*}
Moreover, we have 
\begin{equation*}
  \begin{split}
    |c_{K^1,L^1,I^1_{3}, K^{2,3}, L^{2,3}, I^{2,3}_{2}, I^{2,3}_{3}}| 
    & \le \frac{|(L^1)^{(1)}|^{\frac{3}{2}}|I_3^1|^\frac{1}{2}}{|(L^1)^{(1)}|^2} \times \frac{|L^{2,3}|^{\frac 12}|I^{2,3}_2| |I_3^{2,3}|^{\frac 12}}{|K^{2,3}|^2}.
  \end{split}
\end{equation*}
Notice that this is the right normalization for \eqref{eq:zygshift}, 
since $f_2$ is related to $L^1$ and $|(L^1)^{(1)}| = 2 |L^1|,$ and we can change the averages into pairings against non-cancellative Haar functions.

We conclude that for some $C \ge 1$ we have
$$
C^{-1} \eqref{eq:shift1}
=\langle S_{((0,l^{2,3}), (1,k^{2,3}), (l^1+1,k^{2,3}))}(f_1,f_2),f_{3} \rangle,
$$ 
where $S_{((0,l^{2,3}), (1,k^{2,3}), (l^1+1,k^{2,3}))}$ is an operator of the desired type and of complexity $$(0,l^{2,3}), (1,k^{2,3}), (l^1+1,k^{2,3}).$$ The other term and the other case of $\Sigma_{3,2}^1 - \Sigma_{2}^2$ are analogous.

The cases $\Sigma_{m_1,3}^1 - \Sigma_{m_1}^3$ are handled almost identically, however, we need to treat 
\[\prod_{j=1}^2 \ave{g_j}_{K^{2,3}} -  \prod_{j=1}^2 \ave{g_j}_{I_{3}^{2,3}}\]
slightly differently. We expand the rectangles $I^{2,3}_3$ in the one-parameter fashion until we reach the smaller of the cubes $K^2,K^3.$ Then we continue with one-parameter expansion with only one of the cubes until we reach the bigger of the cubes $K^2,K^3.$
For example, if $k^3>k^2$, we expand as 
\begin{align*}
  &\prod_{j=1}^2 \ave{g_j}_{K^{2,3}} -  \prod_{j=1}^2 \ave{g_j}_{I_{3}^{2,3}} \\
  &=- \sum_{l^2 = 0}^{k^2-1} \Bigl[\ave{\Delta_{(I_3^{2,3})^{(l^2 + 1,l^2 + 1)}}g_1}_{(I^{2,3}_3)^{(l^{2},l^2)}} \ave{g_2}_{(I^{2,3}_3)^{(l^2,l^2)}} \\
  &\hspace{10em}+ \ave{g_1}_{(I^{2,3}_3)^{(l^{2}+1,l^2 + 1)}} \ave{\Delta_{(I_3^{2,3})^{(l^{2}+1, l^2+1)}}g_2}_{(I^{2,3}_3)^{(l^{2}, l^2)}}\Bigr]\\
  &\quad- \sum_{l^3 = k^2}^{k^3-1} \Bigl[\ave{E_{K^2}\Delta_{ (I_3^{3})^{( l^{3}+1)}}g_1}_{K^2\times (I^{3}_3)^{(l^{3})}} \ave{g_2}_{K^2\times (I^{3}_3)^{(l^{3})}} \\
  &\hspace{10em}+ \ave{g_1}_{K^2\times (I^{3}_3)^{(l^{3} + 1)}} \ave{E_{K^2}\Delta_{(I^{3})^{(l^{3}+1)}}g_2}_{K^2\times (I^{3}_3)^{(l^{3})}}\Bigr],
\end{align*}The case $k^3\le k^2$ can be expanded similarly. 
%where \[(I^{2,3}_3)^{(l^{2,3} + 1)} = \begin{cases}
%  K^2 \times (I_3^3)^{(l^3 + 1)}&\text{if } k^2< k^3 \text{ and } l^2 = k^2,\\
%  (I_3^2)^{(l^3 +1)} \times K^3 &\text{if } k^3< k^2 \text{ and } l^2 = k^3,
%\end{cases}\] and
%\[\Delta_{(I^{2,3})^{(l^{2,3}+1)}}g_j = \begin{cases}
%  E_{K^2} \Delta_{(I_3^3)^{(l^3+1)}} g_j &\text{if } k^2< k^3 \text{ and } l^2 = k^2,\\
%   \Delta_{(I_3^2)^{(l^3+1)}}E_{K^3} g_j &\text{if } k^3< k^2 \text{ and } l^2 = k^3.
%\end{cases}\]
%Let us abbreviate this bilinear decomposition, similar to the $P_{K^{2,3}}^{k^{2,3}},$
%by 
%\begin{align*}
%  \sum_{l^{2,3}=0}^{\max(k^2,k^3)-1} &\Bigl[\ave{\Delta_{(I^{2,3})^{(l^{2,3}+1)}}g_1}_{(I^{2,3}_3)^{(l^{2,3})}} \ave{g_2}_{(I^{2,3}_3)^{(l^{2,3})}} \\
 % &+ \ave{g_1}_{(I^{2,3}_3)^{(l^{2,3} + 1)}} \ave{\Delta_{(I^{2,3})^{(l^{2,3}+1)}}g_2}_{(I^{2,3}_3)^{(l^{2,3})}}\Bigr],
%\end{align*}
Similarly as in the previous cases, we can now write the terms in the particular form \eqref{eq:zygshift}. For example, related to the latter term, 
\begin{align*}
  \sum_{K \in \calD_\lambda} &\sum_{\substack{L^{2,3}\in\calD^{2,3}_{\lambda_{l,k}\ell(K^1)}\\L^2 = K^2 \\(L^{3})^{(k^{3}-l^{3})}=K^{3}}} \sum_{(L^{1})^{(l^1)}=K^{1}}
 \sum_{\substack{(I^{1}_3)^{(k^{1})}=K^{1}}}  
     \sum_{\substack{(I_{3}^{2})^{(k^{2})} =K^{2}\\ (I_3^3)^{(l^3)} = L^3}} \\
      &c_{K,L,I_{3}} 
      \Big\langle f_{1}, \frac{1_{K^1}}{|K^1|} \otimes h_{(L^{2,3})^{(0,1)}} 
      \Big\rangle \Big\langle f_2, h_{L^1} \otimes \frac{1_{L^{2,3}}}{|L^{2,3}|} \Big\rangle \langle f_{3}, h_{I_{3}} \rangle,
\end{align*}
where $l^3 \in \{k^2 , \ldots, k^3-1\},\lambda_{l,k} = 2^{-k^1-k^2 + l^3}$ and 
\begin{align*}
  |c_{K,L,I_{3}}|  &= \Big|\sum_{\substack{I_1, I_2\\  (I_j)^{(k)} = K\\ I_2^1 \subset L^1}} a_{K,(I_j)} |I_1|  \ave{h_{L^{1}} \otimes h_{K^{2} \times (L^3)^{(1)}}}_{I_2^{1} \times K^2 \times L^3}\Big|\\
  &\le \sum_{\substack{I_1, I_2\\  (I_j)^{(k)} = K\\ I_2^1 \subset L^1}} \frac{|I_3|^{\frac 12} |I_1||I_2|}{|K|^2} |K^2|^{-\frac 12} |(L^3)^{(1)}|^{-\frac{1}{2}}|L^1|^{-\frac 12} \\
  &= \frac{|L^1|^{\frac 12}}{|K^1|}\frac{|I_3|^{\frac 12}|K^{2}\times (L^3)^{(1)}|^{\frac 32}}{|K^{2}\times (L^3)^{(1)}|^2}.
\end{align*}
This normalization is an absolute constant away from the correct one since we consider that $K^2 \times (L^3)^{(1)}$ is the top rectangle in parameters 2 and 3.

%where we abuse the notation with $L^{(k^{2,3}-l^{2,3})}$ and  $(L^{2,3})^{(1)}$
%and the normalization are analogous to the previous case.

Finally, we consider $\Sigma^1_{3,3}-\Sigma^2_{3}-\Sigma^3_{3}+\Sigma^4$ that equals to
\begin{equation}
  \begin{split}
\sum_{K \in \calD_\lambda}&\sum_{\substack{I_1,I_2,I_3 \in \calD_{Z}\\ I_j^{(k)} = K}}
b_{K,(I_j)} \\
    &\Big[ \prod_{j=1}^2\langle f_j\rangle_{K}
    -\prod_{j=1}^2 \langle f_j \rangle_{I^1_{3} \times K^{2,3}}
    -\prod_{j=1}^2 \langle f_j \rangle_{K^1 \times I^{2,3}_{3}}
    + \prod_{j=1}^2 \langle f_j \rangle_{I_{3}} \Big] \langle f_{3}, h_{I_{3}} \rangle.
  \end{split}
\end{equation} 

As we already showed, we can expand
\begin{align*}
  &\prod_{j=1}^2\langle f_j\rangle_{K}
    -\prod_{j=1}^2 \langle f_j \rangle_{I^1_{3} \times K^{2,3}} \\
    &= -\sum_{l^1 = 0}^{k^1-1} \Big(\langle \Delta_{(I^1_{3})^{(l^1+1)}} g_{1}\rangle_{I^1_{3}} \langle g_2 \rangle_{(I^1_{3})^{(l^1)}}  +  \langle g_1 \rangle_{(I^1_{3})^{(l^1+1)}} \langle \Delta_{(I^1_{3})^{(l^1+1)}} g_{2}\rangle_{I^1_{3}}\Big),
\end{align*}
where $g_j = \ave{f_j}_{K^{2,3}},$ and similarly for \[
  \prod_{j=1}^n \langle f_j \rangle_{I_{3}} -\prod_{j=1}^2 \langle f_j \rangle_{K^1 \times I^{2,3}_{3}}\] we get same expansion with the positive sign and $g_j = \ave{f_j}_{I^{2,3}_3}.$

  Then we sum the two expansions together and expand in the parameters 2 and 3. That is, we will expand 
    \begin{align*}
    &\sum_{l^1 = 0}^{k^1-1} \ave{h_{(I_{3}^1)^{(l^1 + 1)}}}_{(I_{3}^1)^{(l^1)}} \Big\langle  f_1,h_{(I^1_{3})^{(l^1+1)}} \otimes \frac{1_{K^{2,3}}}{|K^{2,3}|} \Big\rangle \langle f_2 \rangle_{(I^1_{3})^{(l^1)}\times K^{2,3}}  \\
    &\hspace{3em}-  \Big\langle  f_1, h_{(I^1_{3})^{(l^1+1)}} \otimes \frac{1_{I_3^{2,3}}}{|I_3^{2,3}|} \Big\rangle \langle f_2 \rangle_{(I^1_{3})^{(l^1)}\times I_3^{2,3}}. 
      \end{align*}
%  where we use the adapted notation introduced in the previous case, that is, for example,
%    the rectangle $(I_{3}^{2,3})^{(l^{2,3})}$ is either $(I_{3}^{2,3})^{(l^2, l^2)},(I_{3}^{2,3})^{(k^2, l^3)}$ or $(I_{3}^{2,3})^{(l^3, k^3)}$. 
  Thus, we get, for example when $k^2 < k^3$, that  \begin{align*}
   &\sum_{l^1=0}^{k^{1}-1} \sum_{l^{2} = 0}^{k^{2}-1} \sum_{K \in \calD_\lambda} \sum_{\substack{L^1\in \calD^1 \\ (L^1)^{(k^1-l^1)} = K^1}} \sum_{\substack{L^{2,3} \in \calD^{2,3}_{2^{-l^1}\ell(L^1)} \\ (L^{2,3})^{(k^{2} - l^{2},k^3-l^2)}=K^{2,3} }} \sum_{\substack{I_3 \in \calD_Z\\ (I_3)^{(l^1,l^{2},l^2)} = L}}\\
    &\qquad\times c_{K,L, I_3} \Ave{f_1, h_{(L^1)^{(1)}} \otimes h_{(L^{2,3})^{(1,1)}}^0} \Ave{f_2,h_{L^1}^0 \otimes h_{(L^{2,3})^{(1,1)}}} \ave{f_3,h_{I^3}}\\
    &+\sum_{l^1=0}^{k^{1}-1} \sum_{l^{3} = k^2}^{k^{3}-1} \sum_{K \in \calD_\lambda} \sum_{\substack{L^1\in \calD^1 \\ (L^1)^{(k^1-l^1)} = K^1}} \sum_{\substack{L^{2,3} \in \calD_{2^{-l^1-k^2 + l^3} \ell(L^1)} \\ L^2 = K^2 \\ (L^{3})^{(k^{3} - l^{3})}=K^{3} }} \sum_{\substack{I_3 \in \calD_Z\\ (I_3)^{(l^1,k^2,l^{3})} = L}}\\
    &\qquad\times c_{K,L, I_3} \Ave{f_1, h_{(L^1)^{(1)}} \otimes h_{(L^{2,3})^{(0,1)}}^0} \Ave{f_2,h_{L^1}^0 \otimes h_{(L^{2,3})^{(0,1)}}} \ave{f_3,h_{I^3}}.
  \end{align*}
  Here %$\lambda_{l} := 2^{-l^1 - l^{2,3}},$ \[(L^{2,3})^{(1)} = \begin{cases} (L^2)^{(1)} \times (L^3)^{(1)}, &\text{if } l^{2} \le \min(k^2,k^3)\\
  %  K^2 \times (L^3)^{(1)}, &\text{if } k^2 < l^{3} \le k^3 \\
  %  (L^2)^{(1)} \times K^3, &\text{if } k^2 \ge l^{3} > k^3
%  \end{cases} \]  and
\begin{align*}
  |c_{K,L, I_3} | &=\Big| \sum_{\substack{I_1,I_2 \in \calD_Z\\ I_j^{k} = K}} a_{K,(I_j)} |I_1| |L^1|^{-\frac{1}{2}}|(L^{2,3})^{(1)}|^{- \frac 12} \ave{h_{(L^1)^{(1)}} \otimes h_{(L^{2,3})^{(1)}}}_{L^1 \times L^{2,3}} \Big|\\
  &\le \frac{|I_3|^{\frac 12}|(L^{1})^{(1)}|^{\frac 32}}{|(L)^{(1)}|^{2}}|L^1|^{-\frac{1}{2}}|(L^{2,3})^{(1)}|^{- \frac 12}\sim \frac{|I_3|^{\frac 12}|(L^{1})^{(1)}|^\frac{1}{2} |L^{1}|^{\frac 12}|(L^{2,3})^{(1)}|^{\frac 32}}{|(L^1)^{(1)}|^{2} |(L^{2,3})^{(1)}|^2}.
\end{align*}  
We abused notation slightly by $(L^{2,3})^{(1)}$ meaning both $(L^{2,3})^{(1,1)}$ and $(L^{2,3})^{(0,1)}$.
The other terms are handled analogously.
\end{proof}

\section{Boundedness of Zygmund shifts}\label{sec:boundedness}
In this section we prove the boundedness of Zygmund shifts. %We will prove the boundedness of the bilinear shift operators of the previous section, see Definition \ref{def:zygshift}.
We first prove the following.  A collection $\mathscr S$ is called $\gamma$-sparse if there are
pairwise disjoint subsets $E(S)\subset S$, $S \in \mathscr S$, with $\abs{E(S)} \ge \gamma\abs{S}$. Often the precise
value of $\gamma$ is not important and we just talk about sparse collections.
\begin{prop}\label{prop:sparse}
Let $\lambda=2^k$ for some $k\in \Z$ and 
\[
\Lambda (f_1, f_2, f_3)= \sum_{K\in \calD^{2,3}_\lambda} \sum_{(I_j)^{(\ell_j)}= K} \frac{\prod_{j=1}^3 |I_j|^{\frac 12}}{|K|^2} |\langle f_1, h_{I_1}^0\rangle|\cdot| \langle f_2, h_{I_2}\rangle |\cdot|\langle f_3, h_{I_3}\rangle|.
\] 
Then there exists a sparse collection $\mathcal S \subset \calD^{2,3}_\lambda$ such that 
\[
 \Lambda (f_1, f_2, f_3) \lesssim \max \{k^2, k^3\}\sum_{S\in \calS}|S| \prod_{j=1}^3 \langle |f_j|\rangle_S.
\]
\end{prop}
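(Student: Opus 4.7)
The plan is a Lerner-style principal cubes sparse domination on the dilated lattice $\calD^{2,3}_\lambda$. The key structural observation is that $\calD^{2,3}_\lambda$ is closed under taking parents: moving from $K$ to $K^{(1)}$ in $\calD^{2,3}$ doubles both side lengths and so preserves the aspect ratio $\lambda$. Hence $\calD^{2,3}_\lambda$ is a nested dyadic family that behaves effectively like a one-parameter structure, which permits a standard stopping-time construction despite the rectangular geometry.

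First I would factor the inner sum at each $K$. Assuming $f_j \ge 0$, the identity $h_{I_1}^0 = |I_1|^{-1/2}1_{I_1}$ collapses the $I_1$-sum to $\sum_{I_1^{(\ell_1)}=K}|I_1|\ave{f_1}_{I_1} = |K|\ave{f_1}_K$, while for $j = 2,3$ Cauchy--Schwarz in $I_j$ gives
\[
\sum_{I_j^{(\ell_j)}=K}|I_j|^{1/2}|\ave{f_j,h_{I_j}}| \le |K|^{1/2}\sigma_j(K), \qquad \sigma_j(K):=\Bigl(\sum_{I_j^{(\ell_j)}=K}|\ave{f_j,h_{I_j}}|^2\Bigr)^{1/2}.
\]
Combining these three factorizations yields the pointwise-in-$K$ bound
\[
\Lambda(f_1,f_2,f_3) \le \sum_{K\in\calD^{2,3}_\lambda}\ave{f_1}_K\,\sigma_2(K)\,\sigma_3(K).
\]

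Next I would build a sparse family $\mathcal{S}\subset\calD^{2,3}_\lambda$ by the standard Calder\'on--Zygmund stopping time: starting from a maximal $K_0\in\calD^{2,3}_\lambda$, iteratively adjoin the maximal $K\in\calD^{2,3}_\lambda$ strictly inside some previously-selected $P$ with $\ave{f_j}_K > 4\ave{f_j}_P$ for some $j$; sparseness follows from the usual weak-type argument on the nested lattice. On each layer $\mathcal{E}(P):=\{K\in\calD^{2,3}_\lambda : P\text{ is the minimal element of }\mathcal{S}\text{ containing }K\}$ the stopping condition gives $\ave{f_1}_K \lesssim \ave{f_1}_P$, and Cauchy--Schwarz in $K$ produces
\[
\sum_{K\in\mathcal{E}(P)}\ave{f_1}_K\sigma_2(K)\sigma_3(K) \lesssim \ave{f_1}_P\Bigl(\sum_{K\in\mathcal{E}(P)}\sigma_2(K)^2\Bigr)^{1/2}\Bigl(\sum_{K\in\mathcal{E}(P)}\sigma_3(K)^2\Bigr)^{1/2}.
\]
Bessel's inequality (each Haar coefficient is counted at most once, as $K=I_j^{(\ell_j)}$ is determined by $I_j$ and $\ell_j$) yields $\sum_{K\in\mathcal{E}(P)}\sigma_j(K)^2 \le \|f_j\|_{L^2(P)}^2$. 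A Calder\'on--Zygmund truncation $f_j = g_j + b_j$ with $g_j$ constantified to $\ave{f_j}_{P'}$ on each $P'\in\ch_{\mathcal{S}}(P)$ gives $g_j \le 4\ave{f_j}_P$, and the good part contributes $\|g_j\|_{L^2(P)}^2 \le \|g_j\|_\infty\|g_j\|_{L^1(P)} \lesssim |P|\ave{f_j}_P^2$.

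The main obstacle---and the source of the $\max\{k^2,k^3\}$ factor---is the accounting of the bad-part contributions. A Haar coefficient $\ave{b_j,h_{I_j}}$ with $I_j\subsetneq P'\in\ch_{\mathcal{S}}(P)$ feeds into the layer $\mathcal{E}(P)$ only when $K=I_j^{(\ell_j)}\supseteq P'$, and because the parent-nested structure of $\calD^{2,3}_\lambda$ forces a single chain of scales, the number of admissible $K\in\mathcal{E}(P)$ with $K\supseteq P'$ is bounded by $\max\{\ell_j^2,\ell_j^3\}\le \max\{k^2,k^3\}$. A telescoping argument across this depth---using orthogonality within each $P'$ and the Carleson packing of $\mathcal{S}$---produces the single factor $\max\{k^2,k^3\}$, yielding $\sum_{K\in\mathcal{E}(P)}\ave{f_1}_K\sigma_2(K)\sigma_3(K) \lesssim \max\{k^2,k^3\}\,|P|\prod_j\ave{f_j}_P$. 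Summing over $P\in\mathcal{S}$ gives the claimed sparse bound. The delicate point is verifying that the recursion across principal children closes with only this single logarithmic loss, rather than a quadratic loss or an unsummable tail, and this is precisely where the ``one-parameter-like'' behavior of the dilated lattice $\calD^{2,3}_\lambda$ is genuinely used.
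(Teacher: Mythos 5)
Your approach is genuinely different from the paper's: the paper does not carry out a principal-cubes argument at all, but instead bounds $\Lambda$ by the trilinear pairing $\int f_1 \sum_K \langle|\Delta_K^{\ell_2}f_2|\rangle_K\langle|\Delta_K^{\ell_3}f_3|\rangle_K 1_K$, proves the plain $L^{p}\times L^{q}\times L^{r}$ estimate via square-function and maximal-function bounds (with no complexity loss), and then invokes the abstract sparse-domination machinery of \cite{LMOV}*{Section 5}, where the stopping-time iteration and the complexity factor are handled in generality. Your first reduction --- collapsing the $I_1$-sum via $h_{I_1}^0 = |I_1|^{-1/2}1_{I_1}$ and applying Cauchy--Schwarz in $I_2,I_3$ to arrive at $\Lambda \le \sum_K \ave{f_1}_K\sigma_2(K)\sigma_3(K)$ --- is correct and essentially matches the paper's first inequality, and the observation that $\calD_\lambda^{2,3}$ is parent-closed (hence one-parameter-like) is exactly the structural fact that makes any stopping-time argument viable.

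The gap is in the final paragraph. You Cauchy--Schwarz in $K$ to decouple $\sigma_2$ and $\sigma_3$, and then Bessel gives $\sum_{K\in\mathcal E(P)}\sigma_j(K)^2 \le \|f_j\|_{L^2(P)}^2$; the Calder\'on--Zygmund split $f_j = g_j + b_j$ controls the good part by $|P|\ave{f_j}_P^2$, fine so far. But for the bad part, Bessel only yields $\sum_{K\in\mathcal E(P)}\sigma_j^b(K)^2 \le \sum_{P'\in\ch_{\calS}(P)}\|f_j\|_{L^2(P')}^2$, which is not $\lesssim \max\{k^2,k^3\}\,|P|\ave{f_j}_P^2$ --- it is a global $L^2$ quantity that must be recursed into the next layer of stopping cubes, not estimated in place. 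The counting claim (``the number of admissible $K\supseteq P'$ is at most $\max\{\ell_j^2,\ell_j^3\}$'') is not the obstruction, and in fact the bound is $\min\{\ell_j^2,\ell_j^3\}$ since $K$ must be a $\calD_\lambda^{2,3}$-ancestor of $P'$ and $\ell_j^2-m,\ell_j^3-m\ge 0$; but knowing the chain is short does not control the $\ell^2$-mass of the coefficients sitting in it. What is actually needed is the full iterative absorption argument --- exactly the ``telescoping across principal children'' that you flag as ``the delicate point'' but never carry out, and which is where the single factor $\max\{k^2,k^3\}$ must emerge. As written, the proof identifies the hard step rather than completing it; filling it in is roughly the content of the LMOV argument the paper relies on.
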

\begin{proof}
The proof is an easy adaptation of the sparseness argument in \cite{LMOV}*{Section 5}. In fact, we only need to check the validity of 
\[
 \Lambda (f_1, f_2, f_3)  \lesssim \|f_1\|_{L^p}\|f_2\|_{L^q}\|f_3\|_{L^r},
\] where $p,q,r\in (1, \infty)$ and $1/p+1/q+1/r=1$. This can be done by direct computation:
\begin{align*}
 \Lambda (f_1, f_2, f_3)  &\le \int f_1 \sum_{K\in \calD^{2,3}_\lambda} \langle | \Delta_K^{\ell_2} f_2|\rangle_K  \langle | \Delta_K^{\ell_3} f_3|\rangle_K 1_K\\
&\le \|f_1\|_{L^p }\Big\|\Big(\sum_{K\in \calD_{\lambda}^{2,3}}\big[ M_{\calD_{\lambda}^{2,3}} | \Delta_K^{\ell_2} f_2| \big]^2\Big)^{\frac 12}\Big\|_{L^q}\Big\|\Big(\sum_{K\in \calD_{\lambda}^{2,3}}\big[ M_{\calD_{\lambda}^{2,3}} | \Delta_K^{\ell_3} f_3| \big]^2\Big)^{\frac 12}\Big\|_{L^r}\\
&\lesssim  \|f_1\|_{L^p}\|f_2\|_{L^q}\|f_3\|_{L^r}.
\end{align*}
\end{proof}

\begin{prop}\label{prop:model}
  Let $Q_k$, $k = (k^1,k^2,k^3)$, be a bilinear Zygmund shift as in Section \ref{subsec:shifts}, and let $1 < p_1,p_2< \infty$ and $\frac{1}{2} < p < \infty$ with $\frac{1}{p} := \frac{1}{p_1} + \frac{1}{p_2}.$ Let \[w_1,w_2 \in A_{p}(\R\times \R \times \R),\qquad \text{ and }\qquad w := w_1^{\frac{p}{p_1}}w_2^{\frac{p}{p_2}}.\] Then, for every $\eta \in (0,1)$ we have
  \[\|Q_k(f_1,f_2)\|_{L^p(w)} \lesssim \max_{i}\{k^i\}^2 2^{k^1\eta} \|f_1\|_{L^{p_1}(w_1)} \|f_2\|_{L^{p_2}(w_2)}.\]
\end{prop}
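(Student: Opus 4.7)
The plan is to combine the structural decomposition of Proposition~\ref{prop:sumofshifts} with the sparse form estimate of Proposition~\ref{prop:sparse} and standard tri-parameter weighted theory. The first step is to apply Proposition~\ref{prop:sumofshifts} to write
\[
Q_k = C\sum_{u=1}^{c} \sum_{l^1=0}^{k^1-1}\sum_{l^{2,3}=0}^{k^{2,3}-1} S^u,
\]
a sum of $O(\max_i\{k^i\}^2)$ many tri-parameter bilinear shifts $S^u$ of Zygmund nature as in Definition~\ref{def:zygshift}. Since $w_1,w_2\in A_p(\R\times\R\times\R)$ are genuine product $A_p$ weights (not Zygmund weights), the triangle inequality reduces the problem to the uniform bound
\[
\|S^u(f_1,f_2)\|_{L^p(w)} \lesssim 2^{k^1\eta}\|f_1\|_{L^{p_1}(w_1)}\|f_2\|_{L^{p_2}(w_2)}
\]
for each single shift $S^u$, with $\eta\in(0,1)$ arbitrary.

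Next, I would dualize and study the trilinear form $\langle S^u(f_1,f_2),f_3\rangle$ with $f_3\in L^{p'}(w^{1-p'})$. For each fixed $L^1\in\calD^1$, the inner sum over $L^{2,3}\in\calD^{2,3}_{\lambda(L^1)}$ and its $l^{2,3}$-descendants is exactly of the form controlled by Proposition~\ref{prop:sparse} in parameters $(2,3)$, once one pairs the parameter-$1$ Haar functions and views the three factors as functions of $(x_2,x_3)$ parameterized by $x_1$. Applying the sparse bound fiberwise yields a sparse form in parameters $(2,3)$ with a prefactor $\max\{k^2,k^3\}$.

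The third step is to assemble these fiberwise bounds. Summing over $L^1$ and handling the parameter-$1$ Haar expansions via standard one-parameter square-function and Carleson embedding estimates produces a bi-parameter (parameter~$1$ versus $(2,3)$) sparse-type form on $\R\times\R^2$, with the $2^{k^1\eta}$ factor arising from $A_\infty$ self-improvement applied to absorb the parameter-$1$ complexity $l^1\le k^1$ into the weighted norm. A routine application of bi-parameter $A_p$ extrapolation for product weights then gives the $L^p(w)$ bound in the full range $1<p_1,p_2<\infty$ and $\tfrac12<p<\infty$.

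The main obstacle is reconciling the Zygmund constraint on the top rectangles $L\in\calD_\lambda$ — which couples the three scales via $\lambda\ell(L^1)\ell(L^2)=\ell(L^3)$ — with the sparse form of Proposition~\ref{prop:sparse}, which is intrinsically two-parameter. The key point is that under the weak Zygmund-nature condition of Definition~\ref{def:zygshift} (requiring only that \emph{one} of the rectangles $I_{i_1}^1\times I_{i_2}^{2,3}$ be Zygmund), one can afford to apply the sparse bound on $(2,3)$-fibers and treat parameter~$1$ separately via standard product theory, at the expense of the $2^{k^1\eta}$ loss. As noted at the start of Section~\ref{sec:structural}, this route is deliberately suboptimal: it forfeits Zygmund-weighted bounds, but it gives the unweighted (and product-weighted) estimates across the full bilinear range, which is all that Proposition~\ref{prop:model} asks for.
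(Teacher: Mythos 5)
Your overall strategy matches the paper's: decompose $Q_k$ via Proposition~\ref{prop:sumofshifts} into $O(\max_i\{k^i\}^2)$ tri-parameter shifts of Zygmund nature, apply the sparse estimate of Proposition~\ref{prop:sparse} fiberwise in parameters $(2,3)$ for each fixed $L^1$, assemble over $L^1$ with square-function and maximal-function estimates, and then extrapolate. Two points in your write-up, however, are not correct as stated and need to be repaired.

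First, the source of the $2^{k^1\eta}$ loss. You attribute it to ``$A_\infty$ self-improvement,'' but that is not what is happening and that phrase does not point to a usable estimate here. In the paper the factor comes from a H\"older step: the inner sum over $\{I_3^1 : (I_3^1)^{(\ell_3^1)}=L^1\}$ has $2^{\ell_3^1}$ terms, and one applies H\"older with exponents $(s,s')$, $s=(1/\eta)'$, to pass from an $\ell^1$ sum to an $\ell^s$ sum, paying $2^{\ell_3^1/s'}=2^{\ell_3^1\eta}\le 2^{k^1\eta}$. The $\ell^s$ structure is precisely what lets one invoke the $\ell^s$-valued bound for the weighted maximal function $M_{\calD_\lambda^{2,3}}^{\langle w\rangle_{L^1}}$ (cf.~\cite{LMV-GEN}*{Proposition~4.3}), after which one returns to $\ell^1\le\ell^s$ and closes the estimate with a square-function bound in parameter~$1$ (\cite{LMV-GEN}*{Proposition~5.8}). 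Without this H\"older/vector-valued device you have no way to contract the inner sum, so this is a genuine gap in your outline.

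Second, the extrapolation mechanism. A ``routine bi-parameter $A_p$ extrapolation for product weights'' is not what is needed: the statement involves genuinely multilinear weights $w_1,w_2\in A_p(\R\times\R\times\R)$ with $w=w_1^{p/p_1}w_2^{p/p_2}$, and the full range includes $\tfrac12<p<1$. The paper first establishes the concrete endpoint $L^4(w_1)\times L^4(w_2)\to L^2(w)$ for each shift $S^u$ with $w_1,w_2\in A_4(\R^3)$ (so $w=w_1^{1/2}w_2^{1/2}\in A_4$), and then applies the multilinear extrapolation theorem of Grafakos--Martell and Duoandikoetxea to reach the full range $1<p_1,p_2<\infty$, $\tfrac12<p<\infty$. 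You should specify a base exponent and invoke that multilinear extrapolation; a two-parameter linear $A_p$ extrapolation will not produce the quasi-Banach range on its own.

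Aside from these two issues, your identification of the key structural point -- that the weakened Zygmund constraint in Definition~\ref{def:zygshift} lets you split off parameter~$1$ and treat $(2,3)$ via Proposition~\ref{prop:sparse} at the cost of a subexponential loss -- is exactly the paper's rationale, and your reading of the prefactor $\max\{k^2,k^3\}$ from the sparse step is also consistent with the paper.
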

% In particular, we get unweighted case in the full range let $1 < p_1,p_2< \infty$ and $\frac{1}{2} < p < \infty$ with $\frac{1}{p} := \frac{1}{p_1} + \frac{1}{p_2}.$Then, for every $\eta \in (0,1)$ we have
% \[\|Q_k(f_1,f_2)\|_{L^p} \lesssim \max_{i}\{k^i\}^2 2^{k^1\eta} \|f_1\|_{L^{p_1}} \|f_2\|_{L^{p_2}}.\]
\begin{proof}
We prove the weighted boundedness $L^{4}(w_1) \times L^{4}(w_2) \to L^2(w)$, of the tri-parameter bilinear shifts of Zygmund nature \eqref{eq:zygshift}.
We do this with tri-parameter weights $w_i \in A_{4}$.
We then extrapolate the result to the full bilinear range
using the traditional multilinear extrapolation by Grafakos--Martell (and Duoandikoetxea) \cites{GM,DU}.
Our result then follows from Proposition \ref{prop:sumofshifts}.

Note that if we have $I_3\in \calD_Z$ in \eqref{eq:zygshift}, then the related $\lambda$ in Proposition \ref{prop:sparse} is
$$2^{\ell_3^3-\ell_3^2-\ell_3^1}|L^1|.$$ 
(For other cases, for instance if $I_1^1\times I_2^{2,3}\in \calD_Z$, then $\lambda= 2^{\ell_2^3-\ell_2^2-\ell_1^1}|L^1|$). 
Assume $v\in A_{4,\lambda}(\R^2)$; recall that $A_{p,\lambda}(\R^2)$ is defined similarly as $A_{p}(\R^2)$ except that the supremum is taken over rectangles $R=I\times J$ with $|J|=\lambda |I|$. Then 
\begin{align*}
\sum_{S\in \calS}|S| \prod_{j=1}^3 \langle |f_j|\rangle_S  &= \sum_{S\in \calS} \langle |f_1|\rangle_S  \langle |f_2|\rangle_S \langle |f_3| v^{-1}\rangle_S^v v(S).
\end{align*}
Since for any $R\in \mathcal S$,
\begin{align*}
\sum_{\substack{S\subset R\\ S\in \calS}}v(S)&=\sum_{\substack{S\subset R\\ S\in \calS}} \frac{v( S)}{|S|} | S|\lesssim  \sum_{\substack{S\subset R\\ S\in \calS}} \frac{v(  S)}{|  S|}|E_S| \le   \int_{  R} M_{\calD^{2,3}_{\lambda}}(v1_{ R})\lesssim_{[v]_{A_{4,\lambda}(\R^2)}} v( R),
\end{align*}by the Carleson embedding theorem we have 
\begin{equation}\label{eq:e300}
\sum_{S\in \calS}|S| \prod_{j=1}^3 \langle |f_j|\rangle_S \lesssim_{[v]_{A_{4,\lambda}(\R^2)}}  \int_{\R^2} M_{\calD_{\lambda}^{2,3}}|f_1| M_{\calD_{\lambda}^{2,3}}|f_2| M_{\calD_{\lambda}^{2,3}}^{v}(|f_3|v^{-1})v.
\end{equation}

Now, given weights $w_j\in A_{4}(\R^3)$, $j=1,2$, we know that $w=w_1^{1/{2}}w_2^{1/{2}}\in A_{4}(\R^3)$. We have 
\begin{align*}
|\langle S(f_1, f_2), f_3\rangle|&= \sum_{L^1}\sum_{ (I_j^1)^{(\ell_j^1)}= L^1}\frac{\prod_{j=1}^3 |I_j^1|^{\frac 12}}{|L^1|^2} \Lambda (\langle f_1, h_{I_1^1}\rangle, \langle f_2, h_{I_2^1}^0\rangle, \langle f_3, h_{I_3^1}\rangle).
\end{align*}
Note that $\langle w\rangle_{L^1}\in A_{4,\lambda}(\R^2)$  with $[\langle w\rangle_{L^1}]_{A_{4,\lambda}(\R^2)}\le [w]_{A_4}$ for any $\lambda$. Thus, applying \eqref{eq:e300} with $v=\langle w\rangle_{L^1}$ we have 
\begin{align*}
&|\langle S(f_1, f_2), f_3\rangle|\\&\lesssim \max_{i} \{k^i\}  \sum_{L^1}\sum_{ (I_j^1)^{(\ell_j^1)}= L^1}\frac{\prod_{j=1}^3 |I_j^1|^{\frac 12}}{|L^1|^2}\int_{\R^2} M_{\calD_\lambda^{2,3}} \langle f_1, h_{I_1^1}\rangle  M_{\calD_\lambda^{2,3}} \langle f_2, h_{I_2^1}^0\rangle
M_{\calD_\lambda^{2,3}}^v (\langle f_3, h_{I_3^1}\rangle v^{-1})v\\
&= \max_{i} \{k^i\}  \sum_{L^1}\int_{\R^3}\langle M_{\calD }|\Delta_{L^1}^{\ell_1^1}f_1|\rangle_{L^1}\langle M_{\calD }|f_2|\rangle_{L^1}  \sum_{ (I_3^1)^{(\ell_3^1)}= L^1}|I_3^1|^{\frac 12}M_{\calD_\lambda^{2,3}}^{\langle w\rangle_{L^1}} (\langle f_3, h_{I_3^1}\rangle \langle w\rangle_{L^1}^{-1})\frac {1_{L^1}}{|L^1|}w\\
&\le  \max_{i} \{k^i\}  \Big\| \Big(\sum_{L^1}\big[M_{\calD^1}M_{\calD }|\Delta_{L^1}^{\ell_1^1}f_1|\big]^2 \Big)^{\frac 12}\Big\|_{L^4(w_1)}\|M_{\calD^1} M_{\calD }|f_2|\|_{L^4(w_2)}\\
&\hspace{3cm}\times \Big\| \Big(\sum_{L^1} \Big[\sum_{ (I_3^1)^{(\ell_3^1)}= L^1}|I_3^1|^{\frac 12}M_{\calD_\lambda^{2,3}}^{\langle w\rangle_{L^1}} (\langle f_3, h_{I_3^1}\rangle \langle w\rangle_{L^1}^{-1})|L^1|^{-1}\Big]^21_{L^1}\Big)^{\frac 12}\Big\|_{L^2(w)}.
\end{align*}
By the well-know square function and maximal function estimates we have  
\begin{align*}
\Big\| \Big(\sum_{L^1}\big[M_{\calD^1}M_{\calD}|\Delta_{L^1}^{\ell_1^1}f_1|\big]^2 \Big)^{\frac 12}\Big\|_{L^4(w_1)}\lesssim  \|f_1\|_{L^4(w_1)}
\end{align*}and 
\begin{align*}
\|M_{\calD^1} M_{\calD}|f_2|\|_{L^4(w_2)}\lesssim  \|f_2\|_{L^4(w_2)}.
\end{align*}
The estimate of the last term is a bit tricky. By the (one parameter)vector-valued estimates 
of $M_{\calD_\lambda^{2,3}}^{\langle w\rangle_{L^1}}$ (see e.g. \cite{LMV-GEN}*{Proposition 4.3} for a bi-parameter version (the proof easily adapts to the one-parameter case)), we have 
\begin{align*}
&\Big\| \Big(\sum_{L^1} \Big[\sum_{ (I_3^1)^{(\ell_3^1)}= L^1}|I_3^1|^{\frac 12}M_{\calD_\lambda^{2,3}}^{\langle w\rangle_{L^1}} (\langle f_3, h_{I_3^1}\rangle \langle w\rangle_{L^1}^{-1})|L^1|^{-1}\Big]^21_{L^1}\Big)^{\frac 12}\Big\|_{L^2(w)} \\
&\le 2^{\ell_3^1 \eta} \Big\| \Big(\sum_{L^1} \Big[\sum_{ (I_3^1)^{(\ell_3^1)}= L^1}|I_3^1|^{\frac s2}M_{\calD_\lambda^{2,3}}^{\langle w\rangle_{L^1}} (\langle f_3, h_{I_3^1}\rangle \langle w\rangle_{L^1}^{-1})^s|L^1|^{-\frac s2}\Big]^{\frac 2s} \Big)^{\frac 12}\Big\|_{L^2(\langle w\rangle_{L^1})}\\
&\lesssim 2^{\ell_3^1 \eta}\Big\| \Big(\sum_{L^1} \Big[\sum_{ (I_3^1)^{(\ell_3^1)}= L^1}|I_3^1|^{\frac s2} \big|\langle f_3, h_{I_3^1}\rangle \langle w\rangle_{L^1}^{-1}\big|^s|L^1|^{-\frac s2}\Big]^{\frac 2s} \Big)^{\frac 12}\Big\|_{L^2(\langle w\rangle_{L^1})}\\
&\le 2^{\ell_3^1 \eta}\Big\| \Big(\sum_{L^1} \Big[\sum_{ (I_3^1)^{(\ell_3^1)}= L^1}|I_3^1|^{\frac 12} |\langle f_3, h_{I_3^1}\rangle| \langle w\rangle_{L^1}^{-1}|L^1|^{-\frac 12}\Big]^{2} \Big)^{\frac 12}\Big\|_{L^2(\langle w\rangle_{L^1})}\\
&\lesssim 2^{\ell_3^1 \eta}\|f_3\|_{L^2(w^{-1})},
\end{align*}where $s=(1/\eta)'$ and in the last step we have used~\cite{LMV-GEN}*{Proposition 5.8}. Thus,
\[
\| S(f_1, f_2)\|_{L^2(w)}\lesssim \max_{i} \{k^i\}  2^{k^1\eta}\|f_1\|_{L^4(w_1)}\|f_2\|_{L^4(w_2)}. 
\]
  
\end{proof}

Now we are able to conclude the proof of Theorem \ref{thm:bilinearczz}.
\begin{proof}[Proof of Theorem \ref{thm:bilinearczz}]
By the representation formula discussed in Sections \ref{sec:ZygDeco} 
and \ref{sec:ZygDecoRefined}, the coefficient estimates in 
Section \ref{sec:coefficientesti}  
(in particular \eqref{eq:coefficientesti}) we get that 
\begin{align*}
  \ave{T(f_1,f_2),f_3} = &C \E_{\sigma} \sum_{k^1, k^2, k^3 = 2}^\infty \hspace*{-1em} (|k| +1)^{2} \varphi(k)
  \sum_{I \in \calD_{Z}(k)}  \frac{\ave{Q_{(k^1, k^2, k^3)} (f_1,f_2),f_3}}{C(|k| + 1)^2 \varphi(k)}.
\end{align*}
Thus, for $p_1,p_2 \in (1,\infty)$ so that $p \in (1,\infty),$ we conclude by Proposition \ref{prop:model} that 
\begin{align*}
\|T(f_1, f_2)\|_{L^p(w)}&\lesssim  \sum_{k^1, k^2, k^3 = 2}^\infty (|k| +1)^{2}  \varphi(k) \max_{i}\{k^i\}^2 2^{k^1\eta} \|f_1\|_{L^{p_1}(w_1)} \|f_2\|_{L^{p_2}(w_2)}\\
&\lesssim \|f_1\|_{L^{p_1}(w_1)} \|f_2\|_{L^{p_2}(w_2)},
\end{align*}
where we need to take $\eta<\alpha_1$.
Consequently, we can now pass the result to the full bilinear range
using the traditional multilinear extrapolation \cites{GM,DU}.
\end{proof}

\section{Linear commutators in the Zygmund dilation setting}\label{sec:commutator}
In this section we return to the linear theory and complete the following 
commutator estimate left open by previous results. 
This requires new and interesting paraproduct estimates.
For the context, see the explanation below.
\begin{thm}
  Let $b \in L^1_{loc}$ and  $T$ be a linear CZZ operator as in \cite{HLMV-ZYG}. Let $\theta \in (0,1]$ be the kernel exponent measuring the decay in terms of the Zygmund ratio 
  \[D_\theta (x) := \Bigl(\frac{|x_1x_2|}{|x_3|} + \frac{|x_3|}{|x_1x_2|} \Bigr)^{-\theta}.\] Then
  \[\|[b,T]\|_{L^p \to L^p} \lesssim \|b\|_{\bmo_Z} \]
  whenever $p \in (1,\infty).$
\end{thm}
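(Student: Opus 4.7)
The plan is to invoke the multiresolution representation of \cite{HLMV-ZYG}, which expresses a paraproduct-free CZZ operator $T$ as the expectation $\E_\sigma$ over random dyadic parameters of a weighted sum over complexities $k=(k^1,k^2,k^3)$ of Zygmund shifts $S_k^\sigma$, with a summable decay factor $\varphi(k)$ that can absorb any polynomial loss in $\abs{k}$. Since the commutator is linear in $T$, it suffices to establish
\[
\|[b,S_k]\|_{L^p\to L^p}\lesssim (1+\abs{k})^M\,\|b\|_{\bmo_Z}
\]
for some uniform exponent $M$, and then sum against $\varphi(k)$ and average in $\sigma$.

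For a fixed shift $S_k$, I would expand $[b,S_k]f=b\,S_k f-S_k(bf)$ by splitting $b$ using Zygmund martingale differences relative to the top Zygmund rectangle of each summand of $S_k$ (together with the corresponding one-parameter and bi-parameter partial expansions). After telescoping, this produces an algebraic identity of the schematic form $[b,S_k]=\widetilde{S}_k^b+\Pi_b^{(k)}$, where $\widetilde{S}_k^b$ is a new shift-type operator whose coefficients now carry a localized average of $b$, controlled in $L^\infty$ by $\|b\|_{\bmo_Z}$ through a John--Nirenberg inequality adapted to Zygmund rectangles, while $\Pi_b^{(k)}$ is a finite sum of Zygmund paraproducts built from $b$: a full paraproduct of the form $\sum_{I\in\calD_Z}\langle b,h_{I,Z}\rangle\langle f\rangle_I h_{I,Z}$ together with partial analogs carrying cancellation only in the first parameter or only in the $(2,3)$-slot.

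The main obstacle is the $L^p$ boundedness of these Zygmund paraproducts with constant $\|b\|_{\bmo_Z}$. The Cauchy-integral trick used in \cite{DLOPW-ZYGMUND} is unavailable here because it would require $L^p(w)$ bounds for $T$ against weights $w\in A_{p,Z}$, and these fail for $\theta<1$ by the counterexamples of \cite{HLMV-ZYG}. Instead, I would establish the paraproduct bounds intrinsically on the dyadic level: first an $L^2$ estimate via an $H^1_Z$--$\bmo_Z$ duality combined with a Carleson embedding theorem indexed by Zygmund rectangles, and then the full range $1<p<\infty$ via the short $A_\infty$-extrapolation scheme advertised in the introduction. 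The decisive point is that this extrapolation only needs ordinary tri-parameter $A_\infty$ weights on $\R^3$, bypassing Zygmund $A_{p,Z}$ weights entirely, which is exactly what makes the argument uniform across the full range $\theta\in(0,1]$. Once the shift-type remainders and the Zygmund paraproducts are both controlled, summing in $k$ against $\varphi(k)$ and averaging in $\sigma$ recovers the claimed commutator estimate.
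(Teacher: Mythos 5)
Your proposal follows essentially the same route as the paper: reduce to shift commutators via the multiresolution representation, expand $bf$ in Zygmund martingale differences so that the commutator splits into (i) Zygmund paraproduct terms and (ii) a remaining telescoped sum that yields shift-type operators with coefficients bounded by $\|b\|_{\bmo_Z}$, then bound the paraproducts through $H^1$--$\bmo$ duality, sparse/Carleson embedding, and the short $A_\infty$-extrapolation lemma. Two small corrections to the details as you state them: the paper's extrapolation is carried out with Zygmund $A_{\infty,Z}$ weights, not ordinary tri-parameter $A_\infty$ weights (the key dichotomy is $A_{\infty,Z}$ versus $A_{p,Z}$, and either $A_\infty$ class would actually suffice here since the sparse/Carleson step only sees the one-parameter restricted weights $\langle\nu\rangle_{I^1}\in A_{\infty,\ell(I^1)}(\R^2)$); and the shift-type remainders need no John--Nirenberg input -- the telescoping $\ave{b}_L-\ave{b}_Q$ is decomposed along Zygmund ancestors and each increment is controlled directly by the equivalent characterizations of $\bmo_Z$ from Lemma~\ref{lem:equiv}, costing only a polynomial factor $\max(k^1,k^2,k^3)$ that $\varphi(k)$ absorbs.
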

Here the definition of the little BMO is given by  \[
  \|b\|_{\bmo_Z} := \sup_{\calD_Z} \sup_{R \in \calD_Z} \frac{1}{|R|}\int_{R}|b(x) -\ave{b}_R| \ud x < \infty,
  \]
where the supremum is over all different collections of Zygmund rectangles $\calD_Z$ and then over all $R \in \calD_Z.$ 

This theorem was previously considered in \cite{DLOPW-ZYGMUND} using the so-called Cauchy trick.
That method requires weighted bounds with Zygmund weights. But we now know \cite{HLMV-ZYG}
how delicate such weighted bounds are -- 
weighted 
bounds with Zygmund weights do not in general hold if $\theta < 1$. However, the commutator bounds are still true
-- but we need a different proof, presented here. 
It suffices to prove the boundedness of commutators $[b,Q_{k}]$ for any linear shift $Q_k$ of the Zygmund dilation type.

For $\theta = 1$ we could use the Cauchy trick and the weighted bounds from \cite{HLMV-ZYG} -- this would 
give weighted commutator estimates with Zygmund weights.

We begin by recording lemmas that we need for the main proofs of this section.
\begin{lem}\label{lem:equiv}
Let $b$ be a locally integrable function. 
%Then $b\in \bmo_Z,$ if and only if the following holds
%\begin{enumerate}
%\item $$\max\Big\{\sup_{I^1 \in \calD^1}\|\ave{b}_{I^1,1}\|_{\BMO_{\calD_{\ell(I^1)}}} , \esssup_{x_2,x_3} \|b(\cdot,x_2,x_3)\|_{\BMO} \Big\} < \infty,$$\label{it:1}
%\item $$\max\Big\{\sup_{I^2 \in \calD^2}\|\ave{b}_{I^2,2}\|_{\BMO_{\calD_{\ell(I^2)}}} , \esssup_{x_1,x_3}\|b(x_1, \cdot,x_3)\|_{\BMO} \Big\} < \infty.$$\label{it:2}
%\end{enumerate}
Then the following are equivalent
\begin{enumerate}
\item  $b\in \bmo_{\calD_Z},$
\item $$\max\Big\{\sup_{I^1 \in \calD^1}\|\ave{b}_{I^1,1}\|_{\BMO_{\calD^{2,3}_{\ell(I^1)}}} , \esssup_{(x_2,x_3)\in\R^2} \|b(\cdot,x_2,x_3)\|_{\BMO} \Big\} < \infty,$$\label{it:1}
\item $$\max\Big\{\sup_{I^2 \in \calD^2}\|\ave{b}_{I^2,2}\|_{\BMO_{\calD^{2,3}_{\ell(I^2)}}} , \esssup_{(x_1,x_3)\in\R^{2}}\|b(x_1, \cdot,x_3)\|_{\BMO} \Big\} < \infty.$$\label{it:2}
\end{enumerate}
\end{lem}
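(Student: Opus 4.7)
The plan is to establish the chain (1) $\Rightarrow$ (2) $\Rightarrow$ (1), and then observe (2) $\Leftrightarrow$ (3) by the symmetry of the Zygmund condition $\ell(I^3) = \ell(I^1)\ell(I^2)$ in the first and second parameters. Throughout I will identify $R \in \calD_Z$ with $I^1 \times I^{2,3}$ where $I^{2,3} = I^2 \times I^3 \in \calD^{2,3}_{\ell(I^1)}$.

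For (1) $\Rightarrow$ (2), the bound $\sup_{I^1}\|\langle b\rangle_{I^1,1}\|_{\BMO_{\calD^{2,3}_{\ell(I^1)}}}\le \|b\|_{\bmo_Z}$ follows directly by Jensen's inequality: for any $I^{2,3} \in \calD^{2,3}_{\ell(I^1)}$,
\[
\frac{1}{|I^{2,3}|}\int_{I^{2,3}}\bigl|\langle b\rangle_{I^1,1}(x_2,x_3)-\langle b\rangle_R\bigr|\,\ud x_2\ud x_3
\;\le\;\frac{1}{|R|}\int_R|b-\langle b\rangle_R| \;\le\; \|b\|_{\bmo_Z},
\]
where $R = I^1\times I^{2,3}$ is Zygmund. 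The harder part is the fiber estimate $\esssup_{(x_2,x_3)}\|b(\cdot,x_2,x_3)\|_{\BMO}\lesssim \|b\|_{\bmo_Z}$. My plan is to fix a dyadic interval $J \subset \R$ and, for rectangles $I^2 \times I^3$ with $\ell(I^3)=|J|\ell(I^2)$ shrinking to $(x_2,x_3)$, apply Lebesgue differentiation in $\R^2$ to the $L^1_{loc}$ function
\[
h(x_2,x_3) := \int_J \bigl|b(x_1',x_2,x_3)-\langle b(\cdot,x_2,x_3)\rangle_J\bigr|\,\ud x_1'.
\]
For these rectangles $R = J\times I^2 \times I^3 \in \calD_Z$, I will bound $\fint_{I^2\times I^3}h$ by adding and subtracting $\langle b\rangle_R$, use that $\fint_{I^2\times I^3}\langle b(\cdot,x_2',x_3')\rangle_J\,\ud x_2'\ud x_3' = \langle b\rangle_R$, and conclude that $\fint_{I^2\times I^3}h \le |J|\,\|b\|_{\bmo_Z}$. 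Taking the Lebesgue differentiation limit yields $h(x_2,x_3)\le |J|\,\|b\|_{\bmo_Z}$ almost everywhere, which is the desired fiber BMO bound. Since the family of rectangles with fixed eccentricity ratio $\ell(I^3)/\ell(I^2)=|J|$ is effectively one-parameter, no strong differentiation theorem is needed; this is the only subtle point of the argument. A standard dyadic-grid randomization reduces the supremum over all intervals $J$ to dyadic $J$ in some $\calD^1$, giving the essential supremum bound uniformly.

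For (2) $\Rightarrow$ (1), I split, for any Zygmund rectangle $R = I^1 \times I^{2,3}$,
\[
\frac{1}{|R|}\int_R|b-\langle b\rangle_R|
\le \frac{1}{|R|}\int_R|b-\langle b\rangle_{I^1,1}|
+ \frac{1}{|I^{2,3}|}\int_{I^{2,3}}|\langle b\rangle_{I^1,1}-\langle b\rangle_R|.
\]
The first term is dominated by $\esssup_{(x_2,x_3)}\|b(\cdot,x_2,x_3)\|_{\BMO}$ by Fubini, since inside the integral the inner average over $I^1$ is exactly the one-dimensional mean. The second term, since $\langle b\rangle_R = \langle\langle b\rangle_{I^1,1}\rangle_{I^{2,3}}$ and $I^{2,3} \in \calD^{2,3}_{\ell(I^1)}$, is at most $\|\langle b\rangle_{I^1,1}\|_{\BMO_{\calD^{2,3}_{\ell(I^1)}}}$.

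Finally, (2) $\Leftrightarrow$ (3) follows because the Zygmund relation $\ell(I^3)=\ell(I^1)\ell(I^2)$ is symmetric under swapping the first two parameters, so the entire argument applies verbatim after exchanging the roles of $x_1$ and $x_2$. The main obstacle is the Lebesgue differentiation step in (1) $\Rightarrow$ (2), which crucially uses that the Zygmund constraint reduces a potential two-parameter family of shrinking rectangles to a one-parameter family, avoiding the known pathologies of strong differentiation in $L^1$.
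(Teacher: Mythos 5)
Your argument follows the paper's proof almost verbatim: the first part of (2) via Jensen/Cauchy–Schwarz on the Zygmund rectangle, the fiber BMO bound via the auxiliary function $\int_{I^1}|b-\ave{b}_{I^1,1}|$ and Lebesgue differentiation in $\R^2$ (where, as you correctly note, the rectangles $I^2\times I^3$ have fixed eccentricity $\ell(I^1)$ for each fixed $I^1$, so no strong differentiation theorem is required), the converse via the triangle inequality split $b-\ave{b}_I = (b-\ave{b}_{I^1,1}) + (\ave{b}_{I^1,1}-\ave{b}_I)$, and (2)$\iff$(3) by the $x_1\leftrightarrow x_2$ symmetry of the Zygmund constraint. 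The only nit is that your fiber bound should read $\fint_{I^2\times I^3}h \le 2|J|\,\|b\|_{\bmo_Z}$ (two applications of $\|b\|_{\bmo_Z}$, as in the paper), but this is immaterial.
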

For completeness, we give the proof.
\begin{proof}
Let us begin showing that  $\bmo_Z \implies $ \eqref{it:1} (and by symmetry also \eqref{it:2}).
Clearly, for all Zygmund rectangles $I = I^1\times I^2 \times I^3 \in \calD_Z$ we have 
\begin{align}
\|b\|_{\bmo_Z} &\ge \frac{1}{|I|} \int_{I} |b - \ave{b}_I| \ge \frac{1}{|I^{2,3}|} \int_{I^{2,3}} |\ave{b}_{I^1,1} - \ave{b}_I|.\label{eq:e1}
\end{align}
So by uniform boundedness we immediately get
$$
\|\ave{b}_{I^1,1}\|_{\BMO_{\calD^{2,3}_{\ell(I^1)}}} :=\sup_{I^{2,3} \in \calD_{\ell(I^1)}^{2,3}} \frac{1}{|I^{2,3}|} \int_{I^{2,3}} |\ave{b}_{I^1,1} - \ave{\ave{b}_{I^1,1} }_{I^{2,3}}| \leq \|b\|_{\bmo_Z} < \infty.
$$
We move on to proving the second assertion inside  \eqref{it:1}. For fixed $I^1 \in \calD^1$ we define $f_{I^1}(x^2,x^3) := \int_{I^1}| b(x^1,x^2,x^3) - \ave{b}_{I^1}(x^2,x^3) | \ud x^1.$ Then for every $I^{2,3} \in \calD_{\ell(I^1)}^{2,3}$ we have
\begin{align*}
\ave{f_{I^1}}_{I^{2,3}} &\leq  \frac{1}{|I^{2,3}|} \int_{I^{2,3}} \int_{I^1} |b - \ave{b}_{I} | +  \frac{1}{|I^{2,3}|} \int_{I^{2,3}} \int_{I^1} |\ave{b}_{I^1,1} - \ave{b}_I|\leq 2 |I^1|\|b\|_{\bmo_Z},
\end{align*}
where last inequality holds by definition and the above estimate \eqref{eq:e1}.
Now, by the Lebesgue differentiation theorem we get for $(x^2,x^3) \in \R^2 \setminus N(I^1),$ where $N(I^1)$ is a null set depending on $I^1,$ that
$$
f_{I^1}(x^2,x^3) \leq 2 |I^1|\|b\|_{\bmo_Z}.
$$
It is then easy to conclude that 
$$
\|b(\cdot,x_2,x_3)\|_{\BMO}  \leq 2 \|b\|_{\bmo_Z}
$$
for almost every $(x^2,x^3) \in \R^2.$

Conversely,
\begin{align*}
 \int_{I} |b - \ave{b}_I| &\leq  \int_{I} |b - \ave{b}_{I^1,1}|   +  \int_{I} |\ave{b}_{I^1,1} - \ave{b}_I| \\
 &\leq |I^1| \int_{I^{2,3}} \|b(\cdot,x^2,x^3)\|_{\BMO} + |I|\|\ave{b}_{I^1,1}\|_{\BMO_{\ell(I^1)}} \leq |I|(C_1 + C_2),
\end{align*}
where $C^1 := \esssup_{(x^2,x^3)\in \R^{2}}\|b(\cdot,x^2,x^3)\|_{\BMO}$ and $C_2 := \sup_{I^1} \|\ave{b}_{I^1,1}\|_{\BMO_{\ell(I^1)}}.$
\end{proof}

Then the usual duality results imply the following.
\begin{cor}\label{cor:H1little}
If $b \in \bmo_Z$ and $I^1$ is fixed, then
$$
\sum_{I^{2,3} \in \calD^{2,3}_{\ell(I^1)}}\ave{\ave{b}_{I^1},h_{I^{2,3}}} \varphi_{I^{2,3}} \lesssim \|b\|_{\bmo_Z}\Big\| \Big(\sum_{I^{2,3} \in \calD^{2,3}_{\ell(I^1)}} |\varphi_{I^{2,3}}|^2 \frac{1_{I^{2,3}}}{|I^{2,3}|}\Big)^\frac{1}{2}\Big\|_{L^1}.
$$ 
Also, for fixed $(x_2,x_3)$, we have
$$
\sum_{I^{1} \in \calD^1} \ave{b,h_{I^1}}_1 \varphi_{I^{1}} \lesssim \|b\|_{\bmo_Z}\Big\| \Big(\sum_{I^{1} \in \calD^1} |\varphi_{I^{1}}|^2 \frac{1_{I^{1}}}{|I^{1}|}\Big)^\frac{1}{2}\Big\|_{L^1}.
$$
%Converse should be true but not necessary to prove at this point
\end{cor}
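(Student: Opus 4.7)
The plan is to reduce both inequalities to the classical dyadic $H^1$--$\BMO$ duality of Fefferman, after Lemma~\ref{lem:equiv} is invoked to convert the $\bmo_Z$ norm into uniform one-parameter $\BMO$ control.

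First, I would apply Lemma~\ref{lem:equiv} to extract the two estimates
\[
\sup_{I^1 \in \calD^1} \|\ave{b}_{I^1,1}\|_{\BMO_{\calD^{2,3}_{\ell(I^1)}}} \lesssim \|b\|_{\bmo_Z}
\quad\text{and}\quad
\|b(\cdot,x_2,x_3)\|_{\BMO} \lesssim \|b\|_{\bmo_Z}
\]
for almost every $(x_2,x_3)\in\R^2$. These are precisely the hypotheses needed to activate one-parameter Fefferman duality in the two relevant directions.

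For the first claim, I would view the scalars $\{\varphi_{I^{2,3}}\}$ as the Haar coefficients of a formal test function $g := \sum_{I^{2,3} \in \calD^{2,3}_{\ell(I^1)}} \varphi_{I^{2,3}} h_{I^{2,3}}$. Because $\calD^{2,3}_{\ell(I^1)}$ is a dilated dyadic lattice with a fixed aspect ratio, the system $\{h_{I^{2,3}}\}$ is a standard one-parameter Haar basis on $\R^2$, and the classical dyadic $H^1$--$\BMO$ duality paired with the square-function characterization of dyadic $H^1$ gives
\[
\sum_{I^{2,3}} \ave{\ave{b}_{I^1,1}, h_{I^{2,3}}} \varphi_{I^{2,3}} \;=\; \ave{\ave{b}_{I^1,1}, g} \;\lesssim\; \|\ave{b}_{I^1,1}\|_{\BMO}\, \|g\|_{H^1},
\]
together with
\[
\|g\|_{H^1} \;\sim\; \Big\|\Big(\sum_{I^{2,3}} |\varphi_{I^{2,3}}|^2\, \tfrac{1_{I^{2,3}}}{|I^{2,3}|}\Big)^{1/2}\Big\|_{L^1}.
\]
Combining with the uniform $\BMO$ bound from Lemma~\ref{lem:equiv} yields the first inequality.

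The second claim is entirely analogous, now in the variable $x_1$: I would fix $(x_2,x_3)$ in the full-measure set where $\|b(\cdot,x_2,x_3)\|_{\BMO} \lesssim \|b\|_{\bmo_Z}$, and apply classical one-dimensional dyadic $H^1$--$\BMO$ duality on $\calD^1$ to the pairing of $b(\cdot,x_2,x_3)$ with the test function $g = \sum_{I^1} \varphi_{I^1} h_{I^1}$, using again the $L^1$ square-function characterization of dyadic $H^1(\R)$. There is no real obstacle here: the whole content is packaged in Lemma~\ref{lem:equiv}, which provides the one-parameter $\BMO$ bound, after which the corollary is a direct application of Fefferman duality and the Fefferman--Stein square-function identity.
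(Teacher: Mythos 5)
Your proposal is correct and matches the paper's intent: the paper merely remarks that Lemma~\ref{lem:equiv} together with ``the usual duality results'' gives the corollary, and you have correctly identified those duality results as classical dyadic $H^1$--$\BMO$ duality in the one-parameter dilated lattice $\calD^{2,3}_{\ell(I^1)}$ and in $\calD^1$, respectively, combined with the $L^1$ square-function characterization of dyadic $H^1$. No meaningful divergence from the paper's route.
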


Using the duality type estimates we can use the square function lower bounds to prove the inclusion of product type spaces.% but in this case we use this to define the product spaces Zygmund type BMO.
\begin{defn}\label{defn:BMOprodZ}
Given a lattice of Zygmund rectangles $\calD_Z$ and a sequence of scalars $B = (b_I)_{I \in \calD_Z}$ we define
$$
\|B\|_{\BMO_{\operatorname{prod}}} := \sup_{\Omega} \Bigg(\frac1{|\Omega|} \sum_{\substack{I \in \calD_Z\\ I\subset \Omega}} |b_I|^2\Bigg)^{\frac 12}.
$$
\end{defn}

The inclusion of the little BMO space can be easily seen from the duality estimate 
\begin{equation}\label{BmoprodH1}
\|B\|_{\BMO_{\operatorname{prod}}} \sim \sup\Big\{ \sum_{I \in \calD_Z} |a_I| |b_I| \colon \Big\| \Big( \sum_{I \in \calD_Z} |a_I|^2 \frac{1_I}{|I|} \Big)^{\frac{1}{2}} \Big\|_{L^1} \le 1 \Big\}. 
\end{equation}

%Analogous results with $A_{\infty,Z}$ weighted Bloom type bmo should hold as we have the characterization of weights similarly and the null sets should not cause any troubles ({\color{blue} check if necessary}).
\subsection{Paraproduct expansions}
 Here the correct expansions style is the Zygmund martingale expansion similar to \cite{HLMV-ZYG}*{Equation (5.22)}. This gives
 \begin{align}\label{eq:dec}
 bf &= \sum_{I \in \calD_Z}  \Big[ \Delta_{I,Z}b \Delta_{I,Z}f + \Delta_{I, Z}b \Delta_{I^1} E_{I^{2,3}} f + \Delta_{I^1} E_{I^{2,3}} b\Delta_{I, Z} f \\
   &\quad+\Delta_{I, Z}b E_{I^1}  \Delta_{I^{2,3}} f + \Delta_{I, Z}bE_{I^1} E_{I^{2,3}} f +\Delta_{I^1} E_{I^{2,3}}bE_{I^1}  \Delta_{I^{2,3}} f \nonumber\\
    &\quad+  E_{I^1} \Delta_{I^{2,3}} b \Delta_{I, Z} f +E_{I^1} \Delta_{I^{2,3}}b\Delta_{I^1} E_{I^{2,3}}f+ E_{I^1} E_{I^{2,3}}b \Delta_{I, Z} f  \Big]\nonumber\\
 &=: \sum_{i,j= 1}^3 a_{i,j}(b,f),\nonumber
 \end{align}
 where, for example, $a_{1,1} = \sum_{I\in \calD_Z} \Delta_{I,Z}b \Delta_{I,Z}f$ and
 $$
 a_{1,2} = \sum_{I\in \calD_Z} \Delta_{I,Z}b \Delta_{I^1} E_{I^{2,3}}f,
 $$
 i.e., interpret so that rows correspond to the first index $i$ and columns correspond with the second index $j.$

%\noindent \emph{Remark.}  This almost equals to bi-parameter standard expansion but we do not directly have that this is a iterated paraproduct expansion.

\begin{lem}
If $b \in \bmo_Z,$ then the paraproducts $a_{i,j}$ such that $(i,j) \neq (3,3)$   are bounded.
That is, 
$$
\|a_{i,j}(b,f)\|_{L^p} \lesssim \|b\|_{\bmo_Z}\|f\|_{L^p}, \qquad 1 < p < \infty.
$$
\end{lem}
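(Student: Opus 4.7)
We handle the $8$ paraproducts $a_{i,j}$, $(i,j)\neq (3,3)$, case-by-case by dualizing against a test function $g\in L^{p'}$ with $\|g\|_{L^{p'}}=1$. Expanding each martingale operator into Haar coefficients turns $\langle a_{i,j}(b,f),g\rangle$ into a sum $\sum_{I\in \calD_Z} c_I(b)\,d_I(f,g)$, where $c_I(b)$ is a Haar-type coefficient of $b$ and $d_I(f,g)$ is a bilinear expression in $f,g$ built from Haar coefficients and local averages. The paraproducts split naturally into three groups according to the martingale operator acting on $b$: Group~A ($\Delta_{I,Z}b$: $a_{1,1},a_{1,2},a_{2,1},a_{2,2}$), Group~B ($\Delta_{I^1}E_{I^{2,3}}b$: $a_{1,3},a_{2,3}$), and Group~C ($E_{I^1}\Delta_{I^{2,3}}b$: $a_{3,1},a_{3,2}$).

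For Group~A the key input is the embedding $\bmo_Z\hookrightarrow \BMO_{\operatorname{prod}}$ with respect to $\calD_Z$, i.e.\ $\|(\langle b,h_{I,Z}\rangle)_{I\in\calD_Z}\|_{\BMO_{\operatorname{prod}}}\lesssim \|b\|_{\bmo_Z}$, which follows from iterating Corollary~\ref{cor:H1little} together with the duality characterization~\eqref{BmoprodH1}. Peeling off the $b$-sum via~\eqref{BmoprodH1} reduces each estimate to an $H^1$-type bound $\bigl\|\bigl(\sum_I |d_I(f,g)|^2\,\tfrac{1_I}{|I|}\bigr)^{1/2}\bigr\|_{L^1}\lesssim \|f\|_{L^p}\|g\|_{L^{p'}}$, which in turn follows by H\"older from a Zygmund-type martingale square function estimate on one slot and a dyadic maximal function bound on the other.

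For Groups~B and~C the $b$-coefficient comes from a \emph{one}-parameter martingale difference of a partial average of $b$. In Group~B, fix $I^{2,3}$ and set $B_{I^{2,3}}(x_1):=\langle b\rangle_{I^{2,3}}(x_1)$; since $b(\cdot,x_2,x_3)\in \BMO(\R)$ uniformly with norm $\lesssim \|b\|_{\bmo_Z}$ by Lemma~\ref{lem:equiv}, and BMO is stable under averaging in the transverse variables, $\|B_{I^{2,3}}\|_{\BMO(\R)}\lesssim \|b\|_{\bmo_Z}$ uniformly in $I^{2,3}$. In Group~C, for fixed $I^1$, $C_{I^1}(x_2,x_3):=\langle b\rangle_{I^1}(x_1)$ lies in $\BMO_{\calD^{2,3}_{\ell(I^1)}}$ uniformly by Lemma~\ref{lem:equiv}; crucially, because every rectangle in $\calD^{2,3}_{\ell(I^1)}$ has the fixed aspect ratio $\ell(I^1)$, this space is, after rescaling, a standard \emph{one-parameter} BMO on $\R^2$. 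Thus in both groups classical one-parameter $H^1$-BMO duality (in $x_1$ for B, in $(x_2,x_3)$ for C) can be invoked to replace the $b$-coefficient by $\|b\|_{\bmo_Z}$.

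The main technical obstacle is the Zygmund scale constraint $\ell(I^3)=\ell(I^1)\ell(I^2)$. In Group~B, once $I^{2,3}$ is fixed the sum over $I^1$ runs over a \emph{single} scale $\ell(I^3)/\ell(I^2)$, which alone is insufficient for a scale-by-scale $H^1$-BMO argument in $x_1$. The resolution is to apply the one-parameter $H^1$-BMO duality in vector-valued form: bundle the transverse $I^{2,3}$ data into an $\ell^2$-valued object and view the $b$-coefficients as an $\ell^2$-valued $\BMO$ sequence in $x_1$, paired against the corresponding $\ell^2$-valued $H^1$ expression on the $(f,g)$-side. The remaining $L^p\times L^{p'}$ estimate is closed by a Zygmund-type square function on one of $f,g$ and a dyadic maximal function bound on the other. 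Group~C is treated symmetrically, with $(2,3)$ playing the role of the BMO direction and the square function acting in the first parameter; the bi-parameter aspect on the $(2,3)$ side collapses to a one-parameter one thanks to the fixed aspect ratio in $\calD^{2,3}_{\ell(I^1)}$.
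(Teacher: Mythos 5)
Your groupings (A/B/C by which martingale operator hits $b$), the choice of duality inputs (the product--BMO duality \eqref{BmoprodH1} for Group~A, Corollary~\ref{cor:H1little} and Lemma~\ref{lem:equiv} for Groups~B and~C), and the observation that transverse averaging preserves slice-BMO, all match the paper and are correct. The gap is in how you close the $H^1$-type bounds after the $b$-coefficients have been peeled off. For the ``twisted'' terms $a_{1,2},a_{2,1}$ (and likewise for $a_{1,3},a_{2,3},a_{3,1},a_{3,2}$), the quantity inside the $L^1$ norm is a product like
\[
\Big\langle f, \tfrac{1_{I^1}}{|I^1|}\otimes h_{I^{2,3}}\Big\rangle\,\Big\langle g, h_{I^1}\otimes\tfrac{1_{I^{2,3}}}{|I^{2,3}|}\Big\rangle,
\]
where \emph{both} factors mix a Haar coefficient in one parameter group with an average in the other, and both the square-function and maximal-function candidates depend on $x_{2,3}$ (resp.\ $x_1$) through the entangled constraint $\ell(I^3)=\ell(I^1)\ell(I^2)$. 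A direct H\"older split ``Zygmund square function on one slot, dyadic maximal on the other'' does not decouple them; the paper flags this explicitly (``to decouple $f$ and $g$ we cannot blindly take maximal functions only in some parameters --- this would break the Zygmund structure''). What actually makes this work in the paper is the combination of (i) sparse domination at a fixed first-parameter scale to introduce the needed maximal operators inside a weighted $L^2(\nu)$ norm, (ii) the Carleson embedding for the partial average $\langle\nu\rangle_{I^1}\in A_{\infty,\ell(I^1)}(\R^2)$, and (iii) the $A_\infty$ extrapolation Theorem~\ref{thm:ainftyextra} (and its vector-valued version for Groups~B,~C) to return from $L^2(\nu)$ to $L^1$. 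None of this appears in your proposal; instead you attribute the obstacle to Group~B's single-scale issue (which is a minor point handled by the second estimate in Corollary~\ref{cor:H1little}) rather than to the decoupling problem that pervades every non-trivial term. Without the extrapolation/sparse machinery, the step from the $H^1$-type expression to $\|f\|_{L^p}\|g\|_{L^{p'}}$ does not go through.

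One further small point: your claimed embedding $\bmo_Z\hookrightarrow\BMO_{\operatorname{prod}}$ is correct and is exactly what \eqref{BmoprodH1} encodes, but the ``iterating Corollary~\ref{cor:H1little}'' justification you give is vague; the paper simply states the inclusion as a consequence of the duality characterization, and in the proof never needs anything stronger. The substantive missing ingredient remains the $A_\infty$ extrapolation and sparse argument.
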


%%%%%%%%%% stuff from previous HLMV that could be used 
\begin{proof}
\emph{Case 1: product type $i\neq 3\neq j.$} We begin with the paraproducts where it would suffice to have a product BMO type assumption
(but recall that little BMO is a subset).
The symmetry $\Pi = a_{1,1}$ is essentially trivial. By \eqref{BmoprodH1} we have
\begin{align*}
|\langle \Pi f, g\rangle| &\lesssim \Big\| \Big( \sum_{I \in \calD_Z}  |\langle f, h_{I,Z} \rangle|^2 \langle |g| \rangle_I^2 \frac{1_I}{|I|} \Big)^{\frac{1}{2}} \Big\|_{L^1} \\
&\le \Big\| \Big( \sum_{I \in \calD_Z}  \langle |\Delta_{I,Z} f| \rangle_I^2 1_I \Big)^{\frac{1}{2}} \Big\|_{L^p} 
\| M_Z g \|_{L^{p'}}  \\
&\lesssim \Big\| \Big( \sum_{I \in \calD_Z}  M_{Z}(\Delta_{I,Z} f)^2  \Big)^{\frac{1}{2}} \Big\|_{L^p} \|g\|_{L^{p'}} \\
&\lesssim \| S_Z f\|_{L^p} \|g\|_{L^{p'}} \lesssim \| f\|_{L^p} \|g\|_{L^{p'}}.
\end{align*}

The `twisted' case $\Pi = a_{1,2}$ (and the symmetrical $a_{2,1}$) is trickier.
Indeed, to decouple $f$ and $g$ we cannot blindly take maximal functions only in some parameters -- this would break the Zygmund structure. 
In any case, we begin with the application of \eqref{BmoprodH1} to get
$$
|\langle \Pi f, g\rangle| \lesssim \Big\| \Big(\sum_{I \in \calD_Z} 
\Big| \Big \langle f, \frac{1_{I^1}}{|I^1|} \otimes h_{I^2 \times I^3} \Big \rangle \Big \langle g, h_{I^1} \otimes \frac{1_{I^2 \times I^3}}{|I^2 \times I^3|} \Big \rangle \Big|^2
\frac{1_I}{|I|} \Big)^{\frac{1}{2}} \Big\|_{L^1}.
$$

The above is an $L^1$ norm, while $L^2$ would be nice. This is where $A_{\infty}$ extrapolation comes in. We fix $\nu \in A_{\infty, Z},$ and move to estimate
$$
\Big\| \Big(\sum_{I \in \calD_Z} 
\Big| \Big \langle f, \frac{1_{I^1}}{|I^1|} \otimes h_{I^2 \times I^3} \Big \rangle \Big \langle g, h_{I^1} \otimes \frac{1_{I^2 \times I^3}}{|I^2 \times I^3|} \Big \rangle \Big|^2
\frac{1_I}{|I|} \Big)^{\frac{1}{2}} \Big\|_{L^2(\nu)}.
$$

We will soon show that
\begin{equation}\label{H:eq1}
\begin{split}
\Big\| \Big(\sum_{I \in \calD_Z} 
\Big| \Big \langle f, \frac{1_{I^1}}{|I^1|} \otimes h_{I^2 \times I^3} \Big \rangle \Big \langle g&, h_{I^1} \otimes \frac{1_{I^2 \times I^3}}{|I^2 \times I^3|} \Big \rangle \Big|^2
\frac{1_I}{|I|} \Big)^{\frac{1}{2}} \Big\|_{L^2(\nu)} \\
&\lesssim \Big\| M_{Z}f \Big(  \sum_{I^1\in \calD^1} M_{Z}(\Delta_{I^1}g)^2 \Big)^{1/2} \Big\|_{L^2(\nu)}.
\end{split}
\end{equation}
The $A_{\infty}$ extrapolation, Theorem \ref{thm:ainftyextra}, then implies that this inequality holds also in $L^p(\nu)$, $p \in (0,\infty)$, $\nu \in A_{\infty, Z}$. We take $p = 1$ and $\nu \equiv 1$
to get that
\begin{align*}
|\langle \Pi f, g\rangle| &\lesssim  \Big\| M_{Z}f \Big(  \sum_{I^1\in \calD^1} M_{Z}(\Delta_{I^1}g)^2 \Big)^{1/2} \Big\|_{L^1} \\
&\le \| M_{Z}f \|_{L^p} \Big\| \Big(  \sum_{I^1\in \calD^1} M_{Z}(\Delta_{I^1}g)^2 \Big)^{1/2} \Big\|_{L^{p'}} \lesssim \| f\|_{L^p} \|g\|_{L^{p'}}.
\end{align*}
%For the square function estimate above remember that $x_1 \mapsto w(x_1, x_2, x_3) \in A_p(\R^{d_1})$ uniformly on $x_2, x_3$.

It remains to prove \eqref{H:eq1}. We write
\begin{align*}
&\Big\| \Big(\sum_{I \in \calD_Z} 
\Big| \Big \langle f, \frac{1_{I^1}}{|I^1|} \otimes h_{I^2 \times I^3} \Big \rangle \Big \langle g, h_{I^1} \otimes \frac{1_{I^2 \times I^3}}{|I^2 \times I^3|} \Big \rangle \Big|^2
\frac{1_I}{|I|} \Big)^{\frac{1}{2}} \Big\|_{L^2(\nu)}^2\\
&= \sum_{I^1\in \calD^1}\sum_{I^2\times I^3\in \calD^{2,3}_{\ell(I^1)}} \Big| \Big\langle f, \frac{1_{I^1}}{|I^1|}\otimes h_{I^2\times I^3}\Big\rangle\Big|^2 \Big| \Big \langle g, h_{I^1}\otimes \frac{1_{I^2\times I^3}}{|I^2\times I^3|}\Big \rangle \Big|^2 \langle \nu \rangle_{I}.
\end{align*}
Fix some $I^1 \in \calD^1$. Let $I^2_0 \times I^3_0 \in \calD^{2,3}_{\ell(I^1)}$ and suppose $\varphi_1$, $\varphi_2$ and $\varphi_3$
are locally integrable functions in $\R^2$. Then, there exists a sparse collection 
$\calS=\calS(I^2_0 \times I^3_0, \varphi_1, \varphi_2, \varphi_3)\subset \calD^{2,3}_{\ell(I^1)}(I^2_0 \times I^3_0)$ so that
$$
\sum_{\substack{I^2\times I^3\in \calD^{2,3}_{\ell(I^1)} \\ I^2 \times I^3 \subset I_0^2 \times I_0^3}} 
|\langle \varphi_1,  h_{I^2\times I^3}\rangle|^2 
|\langle \varphi_2 \rangle_{I^2\times I^3} |^2 \langle \varphi_3 \rangle_{I^2\times I^3}
\lesssim \sum_{Q \in \calS} \langle |\varphi_1| \rangle_{Q} ^2\langle |\varphi_2| \rangle_{Q} ^2 \langle |\varphi_3| \rangle_{Q}|Q|.
$$
%This follows from standard arguments, see e.g. \cite{LMOV}.
We use this with the functions $\varphi_1=\langle f \rangle_{I^1} $, 
$\varphi_2 = \langle g, h_{I^1} \rangle$ and $\varphi_3= \langle \nu \rangle_{I^1}$ to have that for some sparse collection 
$\calS=\calS(I^1, I^2_0 \times I^3_0,f,g,\nu) \subset \calD^{2,3}_{\ell(I^1)}$ there holds that 
\begin{equation*}
\begin{split}
\sum_{\substack{I^2\times I^3\in \calD^{2,3}_{\ell(I^1)} \\ I^2 \times I^3 \subset I_0^2 \times I_0^3}}  
&\Big| \Big\langle f, \frac{1_{I^1}}{|I^1|}\otimes h_{I^2\times I^3}\Big\rangle\Big|^2 \Big| \Big \langle g, h_{I^1}\otimes \frac{1_{I^2\times I^3}}{|I^2\times I^3|}\Big \rangle \Big|^2 \langle \nu \rangle_{I} \\
& \lesssim \sum_{Q \in \calS} \langle |\langle f \rangle_{I^1}  | \rangle_{Q} ^2
\langle |\langle g, h_{I^1} \rangle| \rangle_{Q} ^2  \langle \nu \rangle_{I^1} (Q) \\
&\le \sum_{Q \in \calS} \Big(\Big\langle \big(M^{2,3}_{\ell(I^1)} \langle f \rangle_{I^1}\big)\big(M^{2,3}_{\ell(I^1)} \langle g, h_{I^1}\rangle\big) \Big\rangle_Q^{\langle \nu \rangle_{I^1}}\Big)^2 
\langle \nu \rangle_{I^1} (Q) \\
&\lesssim \int_{\R^2} \big(M^{2,3}_{\ell(I^1)} \langle f \rangle_{I^1}\big)^2\big(M^{2,3}_{\ell(I^1)} \langle g, h_{I^1}\rangle\big)^2 
\langle \nu \rangle_{I^1}, 
\end{split}
\end{equation*}
where in the last step we used the fact that $\langle \nu \rangle_{I^1} \in A_{\infty, \ell(I^1)}(\R^2)$ and the Carleson embedding theorem.

Since the last estimate holds uniformly for every $I^2_0 \times I^3_0 \in \calD^{2,3}_{\ell(I^1)}$, we get that
\begin{equation*}
\begin{split}
\sum_{I^1\in \calD^1}\sum_{I^2\times I^3\in \calD^{2,3}_{\ell(I^1)}}& \Big| \Big\langle f, \frac{1_{I^1}}{|I^1|}\otimes h_{I^2\times I^3}\Big\rangle\Big|^2 \Big| \Big \langle g, h_{I^1}\otimes \frac{1_{I^2\times I^3}}{|I^2\times I^3|}\Big \rangle \Big|^2 \langle \nu \rangle_{I} \\
& \lesssim \sum_{I^1\in \calD^1}
\int_{\R^2} \big(M^{2,3}_{\ell(I^1)} \langle f \rangle_{I^1}\big)^2\big(M^{2,3}_{\ell(I^1)} \langle g, h_{I^1}\rangle\big)^2 
\langle \nu \rangle_{I^1} \\
&  \le \sum_{I^1\in \calD^1}
\int_{\R^3} \big(M^{2,3}_{\ell(I^1)} \langle f \rangle_{I^1}\big)^2\big(M^{2,3}_{\ell(I^1)} \langle |\Delta_{I^1}g|\rangle_{I^1}\big)^2 
1_{I^1} \nu \\ 
&\le \int_{\R^2} [M_Z f]^2 \sum_{I^1\in \calD^1} M_Z(\Delta_{I^1} g)^2 \nu.
\end{split}
\end{equation*}
Thus, \eqref{H:eq1} is proved.

\emph{Case 2: little BMO paraproducts ($i=3, j=1,2$ or $i=1,2,j=3$).}
Actually, now we only have ``trivial'' type cases with different twist. Symmetries $a_{1,3}$ and $a_{3,1}$ are similar as well as $a_{2,3}$ and $a_{3,2}.$ Let us choose for example $\Pi = a_{1,3}$ first. By Corollary \ref{cor:H1little} we have
\begin{align*}
|\langle \Pi(b, f), g\rangle| &\lesssim \Big\| \Big( \sum_{I^1 \in \calD^1} \Big( \sum_{I^{2,3}\in \calD^{2,3}_{\ell(I^1)}}|\langle f, h_{I,Z} \rangle||\langle g,h_{I^1}h_{I^1}\otimes h_{I^{2,3}} \rangle|\frac{1_{I^{2,3}}}{ |I^{2,3}|}\Big)^2 \frac{1_{I^1}}{|I^1|} \Big)^{\frac{1}{2}} \Big\|_{L^1}.
% \\
%&\le \Big\| \Big( \sum_{I^1 \in \calD^1} \Big( \sum_{I^{2,3}\in \calD_{\ell(I^1)}}\langle |\Delta_{I,Z} f| \rangle|_{I}^2\Big) \Big(|\langle g,h_{I^1}h_{I^1}\otimes h_{I^{2,3}} \rangle_I|^2 \frac{1_{I^{2,3}}}{ |I^{2,3}|^{2}}\Big)^2 1_{I^1} \Big)^{\frac{1}{2}} \Big\|_{L^1} \\
%&\le \Big\| \Big( \sum_{I \in \calD_Z}  \langle |\Delta_{I,Z} f| \rangle_I^2 1_I \Big)^{\frac{1}{2}} \Big\|_{L^p(w)} 
%\| M_Z g \|_{L^{p'}(w^{1-p'})}  \\
%&\lesssim \Big\| \Big( \sum_{I \in \calD_Z}  M_{Z}(\Delta_{I,Z} f)^2  \Big)^{\frac{1}{2}} \Big\|_{L^p(w)} \|g\|_{L^{p'}(w^{1-p'})} \\
%&\lesssim \| S_Z f\|_{L^p(w)} \|g\|_{L^{p'}(w^{1-p'})} \lesssim \| f\|_{L^p(w)} \|g\|_{L^{p'}(w^{1-p'})}.
\end{align*}
Now we again can use similar sparse method as above and for fixed $I^1$ prove 
\begin{align*}
\int \sum_{I^{2,3}\in \calD_{\ell(I^1)}}|\langle f, h_{I,Z} \rangle||\langle g,h_{I^1}h_{I^1}\otimes h_{I^{2,3}} \rangle|\frac{1_{I^{2,3}}}{ |I^{2,3}|} \ave{\nu}_{I^1} \\\lesssim \int M_{\ell(I^1)}^{2,3} (\ave{|\Delta_{I^1} f|}_{I^1}) M_{\ell(I^1)}^{2,3} \ave{|g|}_{I^1}  1_{I^1}\nu.
\end{align*}
The above estimate together with vector-valued version of Theorem \ref{thm:ainftyextra} (proven in \cite{CUMP} for general Muckenhoupt basis) yields
\begin{align*}
&\Big\| \Big( \sum_{I^1 \in \calD^1} \Big( \sum_{I^{2,3}\in \calD_{\ell(I^1)}}|\langle f, h_{I,Z} \rangle||\langle g,h_{I^1}h_{I^1}\otimes h_{I^{2,3}} \rangle|\frac{1_{I^{2,3}}}{ |I^{2,3}|}\Big)^2 \frac{1_{I^1}}{|I^1|} \Big)^{\frac{1}{2}} \Big\|_{L^1} \\
&\lesssim \Big\| \Big( \sum_{I^1 \in \calD^1}  M_{Z} (\Delta_{I^1} f)^2   \frac{1_{I^1}}{|I^1|} \Big)^{\frac{1}{2}} M_Z g \Big\|_{L^1} \\
&\leq  \Big\| \Big(  \sum_{I^1\in \calD^1} M_{Z}(\Delta_{I^1}f)^2 \Big)^{1/2} \Big\|_{L^{p}} \|M_Z g\|_{L^{p'}} \lesssim \| f\|_{L^p} \|g\|_{L^{p'}}.
\end{align*}

Moving to the symmetry $\Pi = a_{3,2}$ we first get
\begin{align*}
&|\langle \Pi(b, f), g\rangle| \\
&= \Big|\sum_{I \in \calD_Z} \ave{\ave{b}_{I^1},h_{I^{2,3}}} \Ave{f, h_{I^1} \otimes \frac{1_{I^{2,3}}}{|I^{2,3}|}} \ave{g,h_{I,Z}} \Big|\\
&\lesssim\|b\|_{\bmo_Z}  \Big\| \sum_{I^1 \in \calD^1}\Big(   \sum_{I^{2,3}\in \calD_{\ell(I^1)}}|\langle f,h_{I^1} \otimes \frac{1_{I^{2,3}}}{ |I^{2,3}|} \rangle|^2|\langle g,h_{I,Z}\rangle|^2\frac{1_{I^{2,3}}}{ |I^{2,3}|} \Big)^{\frac{1}{2}}\frac{1_{I^1}}{|I^1|}\Big\|_{L^1},
\end{align*}
where we use the other estimate in Corollary \ref{cor:H1little}.
Like above, we continue as follows
\begin{align*}
&\Big\| \sum_{I^1 \in \calD^1}\Big(   \sum_{I^{2,3}\in \calD_{\ell(I^1)}}|\langle f,h_{I^1} \otimes \frac{1_{I^{2,3}}}{ |I^{2,3}|} \rangle|^2|\langle g,h_{I,Z}\rangle|^2\frac{1_{I^{2,3}}}{ |I^{2,3}|}  \Big)^{\frac{1}{2}}\frac{1_{I^1}}{|I^1|}\Big\|_{L^1} \\
&\lesssim \Big\| \sum_{I^1 \in \calD^1} M_{\ell(I^1)}^{2,3} \langle |\Delta_{I^1}f| \rangle_{I^1} M_{\ell(I^1)}^{2,3}\langle |\Delta_{I^1} g|\rangle_{I^1}1_{I^1}\Big\|_{L^1}\\
&\le \Big\| \Big(  \sum_{I^1\in \calD^1} M_{Z}(\Delta_{I^1}f)^2 \Big)^{1/2} \Big\|_{L^{p}} \Big\| \Big(  \sum_{I^1\in \calD^1} M_{Z}(\Delta_{I^1}g)^2 \Big)^{1/2} \Big\|_{L^{p'}} \\
&\lesssim \| f\|_{L^p} \|g\|_{L^{p'}}.
\end{align*}

\end{proof}

In above proof we needed the $A_\infty$ extrapolation with Zygmund $A_\infty$ weights.
In fact, we give a very simple proof of $A_{\infty}$ extrapolation \cite{CUMP} in general.
\begin{thm}\label{thm:ainftyextra}
Let $(f,g)$ be a pair of non-negative functions. Assume that there is some $0<p_0<\infty$ such that for all $w\in A_{\infty, Z}$ there holds 
\[
\int f^{p_0} w \le C([w]_{A_{\infty, Z}})\int g^{p_0} w,
\]
where $C$ is an increasing function.
Then for all $0<p<\infty$ and all  $w\in A_{\infty, Z}$ there holds 
\[
\int f^{p} w \le C([w]_{A_{\infty, Z}})\int g^{p} w.
\]
\end{thm}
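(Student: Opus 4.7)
The plan is to run a weighted Rubio de Francia iteration with the Zygmund-weighted maximal operator
\[
M^w_Z \varphi(x) = \sup_{R \in \calD_Z,\, R \ni x} \frac{1}{w(R)} \int_R |\varphi|\, w.
\]
The relevant facts are: (a) since $w \in A_{\infty, Z}$ implies that $w$ is Zygmund-doubling, $M^w_Z$ is bounded on $L^s(w)$ for all $s > 1$ with bound depending only on $s$ and $[w]_{A_{\infty,Z}}$; and (b) if $V \in A_1^w$ in the Zygmund sense, i.e.\ $M^w_Z V \le C_1 V$, then $V w \in A_{\infty, Z}$ with Muckenhoupt constant controlled by $C_1$ and $[w]_{A_{\infty,Z}}$ (the reverse Hölder inequality for $w$ transfers the $A_1^w$ control into an $A_\infty$ bound for the product).

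\smallskip
First I would treat the case $p > p_0$. Set $r = p/p_0 > 1$ and use duality in $L^r(w)$:
\[
\Bigl(\int f^p w\Bigr)^{1/r} = \|f^{p_0}\|_{L^r(w)} = \sup_{\substack{h\ge 0 \\ \|h\|_{L^{r'}(w)} \le 1}} \int f^{p_0} h\, w.
\]
For each such $h$, define the Rubio de Francia iterate
\[
Rh = \sum_{k=0}^\infty \frac{(M^w_Z)^k h}{\bigl(2\|M^w_Z\|_{L^{r'}(w)\to L^{r'}(w)}\bigr)^k},
\]
so that $h \le Rh$, $\|Rh\|_{L^{r'}(w)} \le 2\|h\|_{L^{r'}(w)}$, and $M^w_Z(Rh) \le 2\|M^w_Z\| \cdot Rh$. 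By fact (b) above, $u := (Rh)\,w \in A_{\infty, Z}$ with constant depending only on $[w]_{A_{\infty,Z}}$ and $r'$. Applying the hypothesis to $u$ and using Hölder:
\[
\int f^{p_0} h w \;\le\; \int f^{p_0}\, u \;\le\; C \int g^{p_0}\, u \;=\; C \int g^{p_0}\,(Rh)\,w \;\le\; 2C\, \|g\|_{L^p(w)}^{p_0}.
\]
Taking the supremum over $h$ and raising to the $r$th power gives $\|f\|_{L^p(w)} \le (2C)^{1/p_0}\|g\|_{L^p(w)}$.

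\smallskip
Next I would treat the case $p < p_0$ by a symmetric dual iteration. Set $s = p_0/p > 1$. Writing $\int f^p w = \|f\|^p_{L^p(w)}$ and using that $L^p(w)$ norms can be recovered from weighted $L^{p_0}$ norms via the dual pairing on $L^s(w)$ with exponent $s' = p_0/(p_0 - p)$: pick $h \ge 0$ with $\|h\|_{L^{s'}(w)} \le 1$ and iterate $h \mapsto Rh$ with $M^w_Z$ bounded on $L^{s'}(w)$. Then $v := (Rh)^{1-p_0/p}\, w$ is an $A_{\infty,Z}$ weight (again by (b) applied to the $A_1^w$ function $Rh$, combined with the reverse Hölder inequality of $w$; one chooses iteration exponents so that $(Rh)^{1-p_0/p}w$ falls into $A_{\infty,Z}$). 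Applying the hypothesis at $p_0$ with weight $v$ and unwinding produces $\int f^p w \le C \int g^p w$.

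\smallskip
The main obstacle is precisely the transfer lemma in fact (b): verifying that the $A_1^w$ property of $Rh$ at the Zygmund scale, together with $w \in A_{\infty,Z}$, does yield $(Rh)w \in A_{\infty,Z}$ with controlled constant. This is where the Zygmund geometry matters — one needs the reverse Hölder inequality for Zygmund $A_\infty$ weights, which in turn relies on Zygmund-doubling and the sparse/dyadic description of $A_{\infty,Z}$. Once this transfer is in hand, the rest of the argument is essentially two applications of Hölder combined with the single hypothesis at $p_0$, which accounts for the advertised brevity.
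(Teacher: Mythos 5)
Your proposal takes a genuinely different route from the paper: you attempt to reproduce the Cruz-Uribe--Martell--P\'erez style Rubio de Francia iteration directly in the $A_{\infty,Z}$ setting, whereas the paper reduces the $A_\infty$ statement to ordinary $A_p$ extrapolation by a simple rescaling. Concretely, the paper observes that the hypothesis at $p_0$ applied with the replacement $(f,g)\mapsto (f^{p_0/r},g^{p_0/r})$ and restricted to $w\in A_{r,Z}\subset A_{\infty,Z}$ is precisely the hypothesis for $A_{r,Z}$ extrapolation at exponent $r$; classical (Zygmund-weighted) $A_p$ extrapolation then gives all $s$, and one picks $r$ and $s$ with $sp_0/r=p$ and $s\ge s_0$ where $w\in A_{s_0,Z}$. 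This sidesteps any need for a weighted maximal operator adapted to $\calD_Z$ or any $A_1\to A_\infty$ transfer lemma.

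Your proposal, as written, has real gaps precisely where it departs from this. Fact (a) -- that $M^w_Z$ is bounded on $L^s(w)$ for $w\in A_{\infty,Z}$ -- is not free in this setting. Unlike a one-parameter dyadic grid, $\calD_Z$ is \emph{not} a nested filtration (two Zygmund rectangles can intersect without inclusion), so $M^w_Z$ is not a martingale maximal function and its boundedness on $L^s(w)$ requires a genuine multi-parameter/Zygmund argument, most likely already using the full $A_{\infty,Z}$ theory you are trying to build. Fact (b) -- that $V\in A_1^w$ (Zygmund) plus $w\in A_{\infty,Z}$ implies $Vw\in A_{\infty,Z}$ with controlled constant -- is acknowledged as ``the main obstacle'' and left unproven; in the Zygmund setting it would need a reverse H\"older inequality for $A_{\infty,Z}$ weights and a careful argument that the $A_1^w$ bound over $\calD_Z$ interacts correctly with the Zygmund geometry. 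Finally, the case $p<p_0$ is only sketched: the claim that ``one chooses iteration exponents so that $(Rh)^{1-p_0/p}w$ falls into $A_{\infty,Z}$'' is asserted rather than argued, and the negative power makes the verification nontrivial. So, while your plan is a legitimate and instructive alternative in principle, as it stands both (a) and (b) are serious missing pieces that the paper's proof avoids entirely by falling back on $A_{p,Z}$ extrapolation.
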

\begin{proof}
We have for all $1<r<\infty$ and all $w \in A_{r,Z}$ that
\[
\int (f^{p_0/r})^r w \le C([w]_{A_{r, Z}})\int (g^{p_0/r})^r w.
\]
Thus, by the classical extrapolation with $A_{p,Z}$ weights we have 
\begin{equation}\label{E:eq9}
\int (f^{p_0/r})^s w \le C([w]_{A_{s, Z}})\int (g^{p_0/r})^s w
\end{equation}
for all $1<s<\infty$ and $w\in A_{s,Z}$.  

Finally, let $0<p<\infty$ and $w\in A_{\infty, Z}$. Then, there exists some $1<s_0<\infty$ such that 
$w\in A_{s_0,Z}$. Choose some $1<r<\infty$ and $s_0\le s<\infty$ such that 
\[
sp_0/r=p.
\]
For example, we can take 
\[
s=\frac{s_0p}{p_0}\Big(\frac{p_0}p+1\Big)=s_0\Big(\frac{p}{p_0}+1\Big),\quad r=s_0\Big(\frac{p_0}p+1\Big).
\] 
Since $A_{s_0,Z} \subset A_{s,Z}$, we can use \eqref{E:eq9} with the exponents $s$ and $r$ to get the claim.
\end{proof}

\subsection{Zygmund shift commutators}

Let $k = (k^1, k^2)$, $k^i \in \{0,1,2,\ldots\}$, be fixed. A Zygmund shift $Q = Q_k$ 
  of complexity $k$, see \cite{HLMV-ZYG}, has the form
  \begin{align*}
    &\ave{Q_{k}f,g} \\
    &= \sum_{K \in \calD_{2^{-k^1-k^2+k^3}}}\sum_{\substack{I,J \in \calD_{Z}\\ I^{(k)} = K = J^{(k)}}} a_{IJK} \ave{f, h_{I^1} \otimes H_{I^{2,3},J^{2,3}}} \ave{g, H_{I^1,J^1} \otimes h_{J^{2,3}}} 
  \end{align*}
  or
    \begin{align*}
    &\ave{Q_{k}f,g} \\
    &= \sum_{K \in \calD_{2^{-k^1-k^2+k^3}}}\sum_{\substack{I,J \in \calD_{Z}\\ I^{(k)} = K = J^{(k)}}} a_{IJK} \ave{f, h_{I^1} \otimes h_{I^{2,3}}} \ave{g, H_{I^1,J^1} \otimes H_{I^{2,3},J^{2,3}} },
  \end{align*}
  where $H_{I,J}$ 
  \begin{enumerate}
  \item is supported on $I\cup J$ and constant on children:
\[
H_{I,J} = \sum_{L \in \ch(I)\cup \ch(J)} b_L 1_L
\]
  \item is $L^2$ normalized: $|H_{I,J}|\leq |I|^{-\frac{1}{2}},$ and
  \item has zero average: $\int H_{I,J} = 0.$ 
  \end{enumerate}
  We will be focusing on the mixed type form since it is the most interesting one. Usually the other type is much easier and the method is easily recovered from this case.

  \begin{prop}
    Let $Q_k$ be a Zygmund shift of complexity $k=(k^1,k^2,k^3).$ Let $1<p<\infty$ and $b \in \bmo_Z.$ Then we have
    \[\|[b,Q_k] f\|_{L^p} \lesssim \max(k^1,k^2,k^3) (|k|+1)^2 \|b\|_{\bmo_Z} \|f\|_{L^p}. \]
  \end{prop}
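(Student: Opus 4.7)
The plan is to follow the standard paraproduct-based strategy for commutator bounds, adapted to the Zygmund setting. Applying the nine-term expansion \eqref{eq:dec} to both $bQ_kf$ and $Q_k(bf)$ gives
\[
[b, Q_k]f = \sum_{(i,j)} \bigl[a_{i,j}(b, Q_k f) - Q_k(a_{i,j}(b, f))\bigr].
\]
For the eight summands with $(i,j) \neq (3,3)$, the preceding lemma gives $\|a_{i,j}(b, \cdot)\|_{L^p \to L^p} \lesssim \|b\|_{\bmo_Z}$, while the linear $L^p$ boundedness of Zygmund shifts from \cite{HLMV-ZYG} gives $\|Q_k\|_{L^p \to L^p} \lesssim (|k|+1)^2$. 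Hence these contributions are bounded by $(|k|+1)^2\|b\|_{\bmo_Z}\|f\|_{L^p}$, which is acceptable.

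The crux is the remaining diagonal term. Observing that $E_{I^1}E_{I^{2,3}}b = \langle b\rangle_I 1_I$ and that $\Delta_{I,Z}g$ is supported on $I$, we have $a_{3,3}(b, g) = \sum_L \langle b \rangle_L \Delta_{L, Z} g$, so $a_{3,3}$ is diagonal in the Zygmund Haar basis. Expanding $Q_k f = \sum_M \langle f, h_{M,Z}\rangle Q_k h_{M,Z}$, a direct computation produces the explicit cancellation
\[
a_{3,3}(b, Q_k f) - Q_k(a_{3,3}(b, f)) = \sum_{L, M} \bigl(\langle b\rangle_L - \langle b\rangle_M\bigr) \langle f, h_{M,Z}\rangle \langle Q_k h_{M, Z}, h_{L, Z}\rangle h_{L, Z},
\]
where $\langle Q_k h_{M,Z}, h_{L,Z}\rangle$ vanishes unless $L^{(k)} = M^{(k)} = K$ for some $K \in \calD_{2^{-k^1-k^2+k^3}}$, inherited from the complexity-$k$ structure of $Q_k$.

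The key estimate is the telescoping bound $|\langle b\rangle_L - \langle b\rangle_M| \lesssim (k^1 + k^2 + k^3)\|b\|_{\bmo_Z}$, obtained by inserting one-parameter dyadic ancestors (at most $k^i$ steps in parameter $i$) between $L$, $K$ and $M$ and controlling each step by the one-variable BMO norm of an appropriate average of $b$, which is finite by the equivalent characterization of $\bmo_Z$ in Lemma \ref{lem:equiv}. Absorbing this loss, the resulting expression is recognized as a Zygmund shift of the same complexity $k$, up to the scalar factor $\max(k^i) \|b\|_{\bmo_Z}$ in its coefficients. Applying the $L^p$ bound for Zygmund shifts from \cite{HLMV-ZYG} then yields the contribution $\max(k^i)(|k|+1)^2\|b\|_{\bmo_Z}\|f\|_{L^p}$, as required.

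The principal technical obstacle is carrying out the Haar computation above for the mixed form of $Q_k$ involving the auxiliary functions $H_{I,J}$ (supported on $I \cup J$ and constant on children of both intervals). In that case the pairing $\langle Q_k h_{M,Z}, h_{L,Z}\rangle$ is nonzero only when the scales and positions of $M$ and $L$ interact in a specific way with $I$ and $J$, and one has to verify case-by-case that the resulting triples $(L, M, K)$ produce coefficients with the normalization demanded by the definition of a Zygmund shift of complexity $k$, so that the final shift bound can actually be invoked.
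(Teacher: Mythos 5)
Your high-level strategy matches the paper's: expand $bf$ (and, in the paper's dual formulation, $bg$) via the nine-term decomposition \eqref{eq:dec}, dispose of the eight paraproducts $a_{i,j}$ with $(i,j)\neq(3,3)$ using the paraproduct lemma and the $L^p$ bound for shifts, and handle the diagonal term by telescoping $\ave{b}_L-\ave{b}_Q$ through one-parameter dyadic steps controlled by the $\bmo_Z$ characterization of Lemma~\ref{lem:equiv}. This is essentially what the paper does.

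However, the concrete claim at the center of your argument is wrong, and it is exactly the part you flag at the end as not yet verified. You assert that $\langle Q_k h_{M,Z}, h_{L,Z}\rangle$ vanishes unless $L^{(k)}=M^{(k)}=K$. This fails when $k^2\neq k^3$. Unwinding the pairing, the Zygmund rectangles $L$ that contribute (from the $f$ side, pairing with $h_{I^1}\otimes H_{I^{2,3},J^{2,3}}$) have $\ell(L^1)=\ell(I^1)$ but $\ell(L^2)$ ranging from $\ell(I^2)$ up to $2^{\max(0,k^3-k^2)}\ell(K^2)$ — so $L^2$ can strictly \emph{contain} $K^2$ — while the rectangles $Q$ on the $g$ side have $\ell(Q^1)$ ranging from $\ell(I^1)$ up to $\ell(K^1)$ and $\ell(Q^2)$ ranging from $2^{-k^1}\ell(I^2)$ up to $\ell(I^2)$. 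Neither family satisfies $(\cdot)^{(k)}=K$, and the two families are not symmetric. As a consequence the telescoping cannot pass through $K$: the paper instead identifies a strictly larger common Zygmund ancestor $R=K^{(k^1,\,\max(0,k^3-k^2),\,k^1+\max(k^2-k^3,0))}$ and telescopes from $L$ up to $R$ and from $Q$ up to $R$, choosing the telescoping steps so that each intermediate rectangle stays Zygmund (steps first in the $\{2,3\}$ group, then in the $\{1,3\}$ group), which is what permits the $\bmo_Z$ control at every step.

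A second, related gap: after the telescoping, the resulting bilinear form is \emph{not} ``a Zygmund shift of the same complexity $k$ up to a scalar factor.'' The $L$ and $Q$ sums run over scales and shapes different from those in the original shift, and the object one obtains is better described as $Q_k$ composed on each side with modified, $b$-weighted projection operators ($\calW^b_{K,k}$ and $\calV^b_{K,k}$ in the paper's notation). Bounding these requires adapting the square-function arguments of \cite{HLMV-ZYG}*{Theorem 6.2 and Lemma 5.20}, not simply re-invoking the shift bound. So the step ``apply the $L^p$ bound for Zygmund shifts'' as you wrote it does not close the argument; the case analysis you yourself identify as the principal technical obstacle — how $L$ and $M$ interact with $I$, $J$, and $K$ through the $H$-functions — is genuinely where the work lies, and your stated constraint $L^{(k)}=M^{(k)}=K$ is the wrong answer to it.
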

  
  \begin{proof}
  We consider the  commutator
  $
  [b,Q_k]f\colon bQ_k f - Q_k(bf) 
  $
  that in the dual form equals to 
  \begin{align*}
  \sum_{K \in \calD_{2^{-k^1-k^2+k^3}}}\sum_{\substack{I,J \in \calD_{Z}\\ I^{(k)} = K = J^{(k)}}} a_{IJK} \Big[ \ave{bf, h_{I^1} \otimes H_{I^{2,3},J^{2,3}}} \ave{g, H_{I^1,J^1} \otimes h_{J^{2,3}}} \\
  -\ave{f, h_{I^1} \otimes H_{I^{2,3},J^{2,3}}} \ave{bg, H_{I^1,J^1} \otimes h_{J^{2,3}}}\Big].
  \end{align*}
  Now, expanding both $bf$ and $bg$ with the expansion \eqref{eq:dec} we get the terms
  \begin{align*}
\ave{Q_k(a_{i,j}(b,f)), g} \qquad \text{ and }\qquad  \ave{Q_kf, a_{i,j}(b,g)} 
  \end{align*}
  whenever $(i,j)\neq (3,3).$ These terms are directly bounded separately, in particular, we have $Q_k\colon L^{p} \to L^{p}$ and $a_{i,j}\colon L^{p} \to L^p.$
  Hence, we are left with bounding
    \begin{align*}
  &\sum_{K \in \calD_{\lambda}}\sum_{\substack{I,J \in \calD_{Z}\\ I^{(k)} = K = J^{(k)}}} a_{IJK} \Big[\sum_{L \in \calD_Z} \ave{b}_{L} \ave{\Delta_{L,Z}f, h_{I^1} \otimes H_{I^{2,3},J^{2,3}}} \ave{g, H_{I^1,J^1} \otimes h_{J^{2,3}}} \\
    &\qquad-\sum_{L \in \calD_Z}\ave{b}_L\ave{f, h_{I^1} \otimes H_{I^{2,3},J^{2,3}}} \ave{\Delta_{L,Z} g, H_{I^1,J^1} \otimes h_{J^{2,3}}}\Big]\\
  &=   \sum_{K \in \calD_{\lambda}}\sum_{\substack{I,J \in \calD_{Z}\\ I^{(k)} = K = J^{(k)}}} a_{IJK} \\
  &\qquad\times\Big[ \sum_{\substack{L \in \calD_Z \\ \ell(L^1)=2^{-k^1}\ell(K^1) \\ \ell(K^2) \le 2^{k^2}\ell(L^2) \le 2^{\max(k^2,k^3)} \ell(K^2) }}  \ave{b}_{L} \ave{\Delta_{L,Z}f, h_{I^1} \otimes H_{I^{2,3},J^{2,3}}} \ave{g, H_{I^1,J^1} \otimes h_{J^{2,3}}} \\
  &\qquad-  \sum_{\substack{Q \in \calD_Z \\ Q^1 \subset K^1, \ \ell(Q^1) \ge \ell(I^1) \\ 2^{-k^1}\ell(K^2) \le 2^{k^2}\ell(Q^2) \le \ell(K^2)}} \ave{b}_Q\ave{f, h_{I^1} \otimes H_{I^{2,3},J^{2,3}}} \ave{\Delta_{Q,Z} g, H_{I^1,J^1} \otimes h_{J^{2,3}}}\Big],
  \end{align*}
  where we have abbreviated $2^{-k^1-k^2+k^3}$ by $\lambda.$
  Now, we write
  $$
  \ave{f, h_{I^1} \otimes H_{I^{2,3},J^{2,3}}}= \sum_{\substack{L \in \calD_Z \\ \ell(L^1)=2^{-k^1}\ell(K^1) \\ \ell(K^2) \le 2^{k^2}\ell(L^2) \le 2^{\max(k^2,k^3)} \ell(K^2) }} 
\langle \Delta_{L,Z} f, h_{I^1} \otimes H_{I^{2,3}, J^{2,3}} \rangle
  $$
  and 
  $$
\ave{g, H_{I^1,J^1} \otimes h_{J^{2,3}}}  = \sum_{\substack{Q \in \calD_Z \\ Q^1 \subset K^1, \ \ell(Q^1) \ge \ell(I^1) \\ 2^{-k^1}\ell(K^2) \le 2^{k^2}\ell(Q^2) \le \ell(K^2)}}
\ave{\Delta_{Q,Z} g, H_{I^1,J^1} \otimes h_{J^{2,3}}}
  $$
  for the unexpanded terms. Thus, we end up with
  \begin{align*}
  &\sum_{K \in \calD_{\lambda}}\sum_{\substack{I,J \in \calD_{Z}\\ I^{(k)} = K = J^{(k)}}} a_{IJK}\sum_{\substack{L \in \calD_Z \\ \ell(L^1)=2^{-k^1}\ell(K^1) \\ \ell(K^2) \le 2^{k^2}\ell(L^2) \le 2^{\max(k^2,k^3)} \ell(K^2) }}\sum_{\substack{Q \in \calD_Z \\ Q^1 \subset K^1, \ \ell(Q^1) \ge \ell(I^1) \\ 2^{-k^1}\ell(K^2) \le2^{k^2} \ell(Q^2) \le \ell(K^2)}} \\
  &\qquad\times\Big[   (\ave{b}_{L}-\ave{b}_Q) \ave{\Delta_{L,Z}f, h_{I^1} \otimes H_{I^{2,3},J^{2,3}}}  \ave{\Delta_{Q,Z} g, H_{I^1,J^1} \otimes h_{J^{2,3}}}\Big].
  \end{align*}

We write explicitly the complexity levels for $Q$ and $L.$
That is, in the above summations we have $(L^2)^{(l^2)} = (K^2)^{(\max(0,k^3-k^2))}$ for some $l^2 \in \{0, \ldots ,\max(k^2,k^3)\},$ $(Q^1)^{(q_1)} = K^1,$ for some $q_1 \in \{0,\ldots ,k^1\},$ and $(Q^2)^{(q_2)} = K^2$ for some $q_2 \in \{k^2, \ldots, k^2 + k^1\}.$ We get 
  \begin{align*}
  &\sum_{K \in \calD_{\lambda}}\sum_{\substack{I,J \in \calD_{Z}\\ I^{(k)} = K = J^{(k)}}} a_{IJK}\sum_{l^2 = 0}^{\max(k^2,k^3)} \sum_{\substack{q_1 \in \{0,\ldots ,k^1\} \\ q_2 \in \{k^2, \ldots, k^2 + k^1\}}} \sum_{\substack{L \in \calD_Z \\ \ell(L^1)=2^{-k^1}\ell(K^1) \\ (L^2)^{(l^2)} = (K^2)^{(\max(0,k^3-k^2))}}} \sum_{\substack{Q \in \calD_Z \\ (Q^1)^{(q_1)} = K^1\\ (Q^2)^{(q_2)} = K^2}} \\
  &\qquad\times\Big[   (\ave{b}_{L}-\ave{b}_Q) \ave{\Delta_{L,Z}f, h_{I^1} \otimes H_{I^{2,3},J^{2,3}}}  \ave{\Delta_{Q,Z} g, H_{I^1,J^1} \otimes h_{J^{2,3}}}\Big].
  \end{align*}
Here we need to notice that $R = R^1 \times R^2 \times R^3 \supset K,L,Q,$ where \[R = K^{(k^1,\max(0,k^3-k^2), k^1+\max(k^2-k^3,0))} \text{ and } R \in \calD_Z.\] This is a common ``Zygmund ancestor'' for all of these rectangles.

%%%% NEW STUFF
Let us expand in the difference 
  \(\ave{b}_{L}-\ave{b}_Q\)
 in the following way
\begin{align*}
  \ave{b}_L &= \ave{b}_L -  \ave{b}_{L^{(0,1,1)}}  \\
            &  + \ave{b}_{L^{(0,1,1)}}  - \ave{b}_{L^{(0,2,2)}} \\
            &\qquad\vdots\\
            &+  \ave{b}_{L^{(0, l^2 - 1, l^2-1)}} - \ave{b}_{L^{(0,l^2,l^2)}} + \ave{b}_{L^{(0,l^2,l^2)}}\\
            &= \sum_{r^2 = 0}^{l^2-1}  \big(\ave{b}_{L^{(0,r^2,r^2)}}  - \ave{b}_{L^{(0,r^2+1,r^2+1)}}\big) + \ave{b}_{L^{(0,l^2,l^2)}}.
\end{align*}
Notice that since $\ell(L^1)\ell(L^2) = \ell(L^3),$ we have $\ell(L^1) \ell((L^2)^{(r^2)}) = \ell((L^3)^{(r^2)}),$ i.e.\  rectangles $(L^2)^{(r^2)} \times (L^3)^{(r^2)} \in \calD_{\ell(L^1)}$ which is desirable since we want to use the characterization $(2)$ in Lemma~\ref{lem:equiv}.
We continue with the last term
\begin{align*}
  \ave{b}_{L^{(0,l^2,l^2)}} &= \ave{b}_{L^{(0,l^2,l^2)}} - \ave{b}_{L^{(1,l^2,1 + l^2)}} \\
  &+ \ave{b}_{L^{(1,l^2,1 + l^2)}} - \ave{b}_{L^{(2,l^2,2 + l^2)}} \\
  &\qquad \vdots \\
  & \ave{b}_{L^{(k^1-1,l^2,k^1-1 + l^2)}} - \ave{b}_{L^{(k^1, l^2,k^1 + l^2)}} + \ave{b}_{G} \\
  &= \sum_{r^1 = 0}^{k^1 - 1} \big( \ave{b}_{L^{(r^1, l^2,r^1 + l^2)}} - \ave{b}_{L^{(r^1 +1, l^2,r^1 + 1 + l^2)}}\big) + \ave{b}_{R}. 
\end{align*}
Recall that $(L^2)^{(l^2)} = (K^2)^{(\max(0,k^3-k^2))} =: R^2$ and observe that since $\ell((L^3)^{(k^1 + l^2)}) = \ell((L^2)^{(l^2)})\ell((L^1)^{(k^1)}) = \ell(R^2)\ell(K^1)$ we get $(L^3)^{(k^1 + l^2)} = R^3$.
Thus, we end up with a sum of terms of the forms
\begin{align}\label{eq:bdiff}
  \ave{b}_{L^{(0,r^2,r^2)}}  - \ave{b}_{L^{(0,r^2+1,r^2+1)}}\quad
   \text{ and }\quad  
   \ave{b}_{L^{(r^1, l^2,r^1 + l^2)}} - \ave{b}_{L^{(r^1 +1, l^2,r^1 + 1 + l^2)}},
\end{align}
and we have for fixed $r^1$ and $r^2$
\[|\eqref{eq:bdiff}| \lesssim \|b\|_{\bmo_Z}\]
by Lemma~\ref{lem:equiv}.

By the same argument as above we get 
\begin{align*}
  \ave{b}_Q =& \sum_{\rho^2 = 0}^{\max(0,k^3-k^2) + q^2 - 1} \ave{b}_{Q^{(0,\rho^2,\rho^2)}} - \ave{b}_{Q^{(0,\rho^2 +1,\rho^2+1)}}\\
  &+ \sum_{\rho^1 = 0}^{q^1} \ave{b}_{Q^{(\rho^1,\wt q^2 ,\rho^1 +\wt q^2)}} - \ave{b}_{Q^{(\rho^1 + 1,\wt q^2 ,\rho^1 +1 +\wt q^2)}}\\
  &+ \ave{b}_{R},
\end{align*} 
where $\wt q^2 = \max(0,k^3-k^2) + q^2,$  \[(Q^2)^{(\wt q^2)} = (K^2)^{(\max(0,k^3-k^2))} \qquad \text{and} \qquad (Q^3)^{(q^1 + \wt q^2)} = (K^3)^{(k^1 + \max(k^2-k^3,0))}.\] Notice that the last term corresponds to the last term in the previous expansion, and hence, their difference equals to zero.
Again, here we have 
\[ |\ave{b}_{Q^{(0,\rho^2,\rho^2)}} - \ave{b}_{Q^{(0,\rho^2 +1,\rho^2+1)}} + \ave{b}_{Q^{(\rho^1,\wt q^2 ,\rho^1 +\wt q^2)}} - \ave{b}_{Q^{(\rho^1 + 1,\wt q^2 ,\rho^1 +1 +\wt q^2)}}| \lesssim \|b\|_{\bmo_Z}\]
for fixed $\rho^1$ and $\rho^2.$

Now, we can split the  commutator into the two terms
$$
\calW_{K,k}^b f= 1_{K} \sum_{\substack{L \in \calD_Z \\ \ell(L^1)=2^{-k^1}\ell(K^1) \\ \ell(K^2) \le 2^{k^2}\ell(L^2) \le 2^{\max(k^2,k^3)} \ell(K^2) }} b_{L,K} \Delta_{L,Z}f,
$$
where
$$
|b_{L,K}| \lesssim \max(k^1, k^2,k^3) \|b\|_{\bmo_Z},
$$
and 
$$
\calV_{K,k}^{b} g= \sum_{\substack{Q \in \calD_Z \\ Q^1 \subset K^1, \ \ell(Q^1) \ge \ell(I^1) \\ 2^{-k^1}\ell(K^2) \le 2^{k^2}\ell(Q^2) \le \ell(K^2)}} b_{Q,K}
\Delta_{Q,Z} g,
$$
where
$$
|b_{Q,K}| \lesssim \max(k^1, k^2,k^3) \|b\|_{\bmo_Z}.
$$

Thus, the last term of the commutator is the sum of
  \begin{align*}   \sum_{K \in \calD_{\lambda}}\sum_{\substack{I,J \in \calD_{Z}\\ I^{(k)} = K = J^{(k)}}} a_{IJK}  \ave{\calW_{K,k}^{b}f, h_{I^1} \otimes H_{I^{2,3},J^{2,3}}}  \ave{\calV_{K,k}g, H_{I^1,J^1} \otimes h_{J^{2,3}}} 
  \end{align*}
  and
    \begin{align*}   \sum_{K \in \calD_{\lambda}}\sum_{\substack{I,J \in \calD_{Z}\\ I^{(k)} = K = J^{(k)}}} a_{IJK}  \ave{\calW_{K,k}f, h_{I^1} \otimes H_{I^{2,3},J^{2,3}}}  \ave{\calV_{K,k}^{b}g, H_{I^1,J^1} \otimes h_{J^{2,3}}}.
  \end{align*}
  The boundedness follows via standard methods (adapt proofs of~\cite{HLMV-ZYG}*{Theorem 6.2 and Lemma 5.20}.)
\end{proof}

\appendix
\section{Bilinear Fefferman-Pipher multipliers}\label{app:multipliers}
In this section we consider bilinear variants of multipliers studied by Fefferman-Pipher~\cite{FEPI}. These
considerations motivate the kernel estimates in Section~\ref{sec:ZSIO}. After the presented calculations,
the reader can easily check how everything fits with Section~\ref{sec:ZSIO}. In fact, we will see that the bilinear Fefferman-Pipher multipliers produce  kernels which satisfy the the kernel estimates in Section~\ref{sec:ZSIO} with 
\[
\theta=2, \quad \alpha_1=1, \quad \alpha_{2,3}=1, \quad 
\] and an extra logarithm factor. 
In the partial kernel estimates $\wt \theta=1$ and there is also a harmless logarithm factor.

We consider the following multi-parameter dilation on $\R^6$ -- define
 \[\rho_{s,t}(x,y) = (sx_1,tx_2,stx_3,sy_1,ty_2,sty_3), \qquad s,t>0,\]
and set
\[A^1 := \{(\xi,\eta) \in \R^6\colon \tfrac 12 < |(\xi_1, \eta_1)| \le 1, \tfrac 12 < |(\xi_2,\xi_3,\eta_2,\eta_3)| \le 1 \}.\]
In this section we consider the parameter groups $\{1\}$ and $\{2,3\}$ only. The grouping $\{\{2\},\{1,3\}\}$ is similar, for example, we would set
\[A^2 := \{(\xi,\eta) \in \R^6\colon \tfrac 12 < |(\xi_2, \eta_2)| \le 1, \tfrac 12 < |(\xi_1,\xi_3,\eta_1,\eta_3)| \le 1 \}.\]

For Schwartz functions $f_1,f_2$ we define the bilinear multiplier operator
\[T_{m,1} (f_1,f_2)(x) = \int_{\R^3} \int_{\R^3} m(\xi,\eta) \widehat{f_1}(\xi)\widehat{f_2}(\eta) e^{2\pi i x\cdot (\xi + \eta) }\ud \xi \ud \eta,\]
where the symbol $m \in C^N$ is assumed to satisfy
\[\Norm{m}{\mathcal M^1_{Z}}:=
\sup_{\substack{\norm{\alpha}{\infty}\leq N\\ |\beta|_\infty \le N}}\sup_{s,t>0} \sup_{(\xi,\eta)\in A^1}\abs{\partial_\xi^\alpha \partial_{\eta}^\beta(m\circ\rho_{s,t})(\xi, \eta)}<\infty.\]
Thus, if $(\xi,\eta) \in A^1,$ then by definition
\begin{align}
  \abs{ (\partial_\xi^\alpha \partial_{\eta}^\beta m)(s\xi_1, t\xi_2, st\xi_3,s\eta_1,t\eta_2,st\eta_3)}&\le  \Norm{m}{\mathcal M^1_{Z}} s^{-\alpha_1-\beta_1} t^{-\alpha_2- \beta_2} (st)^{-\alpha_3-\beta_3}\label{eq:diffMulti}\\
  &=\Norm{m}{\mathcal M^1_{Z}} s^{-(\alpha_1+\beta_1)+(\alpha_2 +\beta_2)} (st)^{-(\alpha_2+\beta_2)-(\alpha_3+\beta_3)}.\nonumber
\end{align}

Now, for $(\zeta_1, \sigma_1)\neq 0$ and $(\zeta_2,\zeta_3,\sigma_2,\sigma_3)\neq 0$ denote
 \begin{align*}
  s= |(\zeta_1,\sigma_1)|,\qquad st= |(s \zeta_2,\zeta_3, s\sigma_2,\sigma_3)|,\\ (\xi_1, \xi_2, \xi_3) = \Big(\frac{\zeta_1}{s},\frac{\zeta_2}{t},\frac{\zeta_3}{st}\Big),\qquad (\eta_1,\eta_2,\eta_3)=\Big(\frac{\sigma_1}{s},\frac{\sigma_2}{t},\frac{\sigma_3}{st}\Big).
\end{align*}
 Thus, $(\xi, \eta)\in A^1$ and 
 \begin{align}\label{eq:blinmulti}
 |\partial^\alpha_{\zeta}\partial^\beta_{\sigma} m(\zeta,\sigma)| &\lesssim \|m\|_{\calM_Z^1} (|\zeta_1|+|\sigma_1|)^{-(\alpha_1+\beta_1)+(\alpha_2 +\beta_2)}\\
 &\hspace{1.5cm}\times \big(|((|\zeta_1|+|\sigma_1|)\zeta_2,\zeta_3)| + |((|\zeta_1|+|\sigma_1|)\sigma_2,\sigma_3)|\big)^{-(\alpha_2+\beta_2)-(\alpha_3+\beta_3)}.\nonumber
 \end{align}

 %We consider the standard resolution of unity fix a smooth function  $p_1\colon \R^2 \to [0,1]$ so that $p(\xi_1,\eta_1) = 1$ when $|\xi_1| + |\eta_1| \le 1$ and  and $R^4.$ Let $p_1$
 We write, with two standard partitions of unity $\phi_1$ on $\R^2\setminus \{0\}$ and $\phi_{2,3}$ on $\R^4 \setminus \{0\},$ that
 \[1 = \sum_{j,k\in \Z} \phi_1(2^{-j}\xi_1, 2^{-j}\eta_1) \phi_{2,3} (2^{-k}\xi_2,2^{-j-k}\xi_3,2^{-k}\eta_2,2^{-j-k}\eta_3).\]
 Via this identity we obtain
 \begin{align*}
  m &= \sum_{j,k} (\phi_1 \otimes \phi_{2,3} \circ \rho_{2^{-j},2^{-k}})\cdot m\\
&= \sum_{j,k} (\phi_1 \otimes \phi_{2,3} \cdot (m \circ \rho_{2^{j},2^{k}}))\circ \rho_{2^{-j},2^{-k}} 
=: m_{j,k}.
 \end{align*}
Since $\phi_1$ and $\phi_{2,3}$ are supported in $\bar B(0, 2)\setminus B(0,\frac 12)$ in $\R^2$ and $\R^4$, respectively, we know that
\begin{align*}
\supp m_{j,k}& \subset \\
&\rho_{2^j, 2^k}\Big\{(\xi, \eta): (\xi_1, \eta_1)\in \bar B_{\R^2}(0, 2)\setminus B_{\R^2}(0,\tfrac 12), (\xi_{2,3}, \eta_{2,3})\in   \bar B_{\R^4}(0, 2)\setminus B_{\R^4}(0,\tfrac 12)\Big\}.
\end{align*}
Using this we get
\[\|\partial^\alpha \partial^\beta m_{j,k}\|_{L^\infty} \lesssim 2^{-(j,k,j+k)\cdot (\alpha+\beta)} \quad \text{and} \quad \|\partial^\alpha \partial^\beta m_{j,k}\|_{L^1} \lesssim 2^{(j,k,j+k)\cdot (\mathbf{2}-(\alpha+\beta))},\] 
where $\mathbf{2} = (2,2,2).$

Let $K_{j,k}(y,z)=\check{m}_{j,k}$ and $K(y,z)=\sum_{j,k}K_{j,k}(y, z)$ -- then $K(x-y,x-z)$ is the corresponding kernel. 
Using similar analysis as in \cite{HLMV-ZYG} we have
\begin{align*}
  \|y^\alpha z^{\tilde \alpha}\partial^\beta_y \partial^\gamma_z K_{j,k}\|_{L^\infty}
  &\lesssim \|  \partial^{\alpha}_\xi \partial_\eta^{\tilde \alpha} (\xi^\beta \eta^\gamma m_{j,k})\|_{L^1} \\
  &\le \sum_{\substack{l \le \alpha \\ \tilde l \le \tilde \alpha}} \binom{\alpha}{l}\binom{\tilde \alpha}{\tilde l} \|\partial^{l} (\xi^\beta) \partial^{\tilde l}(\eta^\gamma) \cdot \partial^{\alpha - l}\partial^{\tilde \alpha - \tilde l}m_{j,k})\|_{L^1} \\
  &\lesssim  2^{(j,k,j+k)\cdot( \mathbf{2}+(\beta+\gamma) - (\alpha+\tilde \alpha))}  
  \end{align*}
for multi-indices $\alpha,\tilde \alpha,\beta,\gamma.$ 
Hence, we get 
\[
|y^{\beta + \mathbf{1}}z^{\gamma + \mathbf{1}} \partial^\beta_y \partial^\gamma_z K_{j,k}(y,z)| \lesssim  2^{(j,k,j+k)\cdot( \mathbf{2}+(\beta+\gamma) - (\alpha+\tilde \alpha))} |y^{ \beta + \mathbf{1}- \alpha}|\cdot |z^{ \gamma + \mathbf{1}- \tilde\alpha}|.
\]
Taking $\alpha_i,\tilde\alpha_i \in \{0,N\}$
we obtain
\begin{align*}
  &|y^{\beta + \mathbf{1}}z^{\gamma + \mathbf{1}} \partial^\beta_y \partial^\gamma_z K(y,z)| \\
  &\lesssim \sum_{j} \min\{(2^j  |y_1|)^{  \beta_1 + 1},(2^j |y_1|)^{  \beta_1 + 1 -N}\} \min\{(2^j  |z_1|)^{  \gamma_1 + 1},(2^j |z_1|)^{  \gamma_1 + 1 -N}\} \\
  &\quad\times\sum_k \min\{(2^k  |y_2|)^{  \beta_2 + 1},(2^k |y_2|)^{  \beta_2 + 1 -N}\} \min\{(2^k  |z_2|)^{  \gamma_2 + 1},(2^k |z_2|)^{  \gamma_2 + 1 -N}\} \\
  &\qquad\times \min\{(2^{j+k}  |y_3|)^{  \beta_3 + 1},(2^{j+k} |y_3|)^{  \beta_3 + 1 -N}\} \min\{(2^{j+k}  |z_3|)^{  \gamma_3 + 1},(2^{j+k} |z_3|)^{  \gamma_3 + 1 -N}\} . 
\end{align*}

We can estimate the inner sum either by
\begin{align*}
  &\sum_{k \colon 2^k < 1/(|y_2| + |z_2|)} (2^k  |y_2|)^{  \beta_2 + 1}(2^k  |z_2|)^{  \gamma_2 + 1}(2^{j+k}  |y_3|)^{  \beta_3 + 1} (2^{j+k}  |z_3|)^{  \gamma_3 + 1}\\
  &+ \sum_{k \colon 2^k \ge 1/(|y_2| + |z_2|)\ge 1/(2|y_2|)} (2^k  |y_2|)^{  \beta_2 + 1-N}(2^k  |z_2|)^{  \gamma_2 + 1}(2^{j+k}  |y_3|)^{  \beta_3 + 1} (2^{j+k}  |z_3|)^{  \gamma_3 + 1} \\
  &+ \sum_{k \colon 2^k \ge 1/(|y_2| + |z_2|)> 1/(2|z_2|)} (2^k  |y_2|)^{  \beta_2 + 1}(2^k  |z_2|)^{  \gamma_2 + 1-N}(2^{j+k}  |y_3|)^{  \beta_3 + 1} (2^{j+k}  |z_3|)^{  \gamma_3 + 1} \\
  &\lesssim  \frac {|y_2|^{\beta_2+1}}{(|y_2|+|z_2|)^{\beta_2+1}} \cdot \frac {|z_2|^{\gamma_2+1}}{(|y_2|+|z_2|)^{\gamma_2+1}}\cdot   \frac {(2^j|y_3|)^{\beta_3+1}}{(|y_2|+|z_2|)^{\beta_3+1}}\cdot   \frac {(2^j|z_3|)^{\gamma_3+1}}{(|y_2|+|z_2|)^{\gamma_3+1}}=:I_1
  \end{align*}
  or by 
  \begin{align*}
  &\sum_{k \colon 2^k < 2^{-j}/(|y_3| + |z_3|)} (2^k  |y_2|)^{  \beta_2 + 1}(2^k  |z_2|)^{  \gamma_2 + 1}(2^{j+k}  |y_3|)^{  \beta_3 + 1} (2^{j+k}  |z_3|)^{  \gamma_3 + 1}\\
  &+ \sum_{k \colon 2^k \ge  2^{-j}/(|y_3| + |z_3|)\ge 2^{-j}/(2|y_3| )} (2^k  |y_2|)^{  \beta_2 + 1}(2^k  |z_2|)^{  \gamma_2 + 1}(2^{j+k}  |y_3|)^{  \beta_3 + 1-N} (2^{j+k}  |z_3|)^{  \gamma_3 + 1}\\
  & +\sum_{k \colon 2^k \ge  2^{-j}/(|y_3| + |z_3|)> 2^{-j}/(2|z_3| )} (2^k  |y_2|)^{  \beta_2 + 1}(2^k  |z_2|)^{  \gamma_2 + 1}(2^{j+k}  |y_3|)^{  \beta_3 + 1} (2^{j+k}  |z_3|)^{  \gamma_3 + 1-N}\\
  &\lesssim \frac {|y_2|^{\beta_2+1}}{[2^j(|y_3|+|z_3|)]^{\beta_2+1}} \cdot \frac {|z_2|^{\gamma_2+1}}{[2^j(|y_3|+|z_3|)]^{\gamma_2+1}}\cdot   \frac {|y_3|^{\beta_3+1}}{(|y_3|+|z_3|)^{\beta_3+1}}\cdot   \frac {|z_3|^{\gamma_3+1}}{(|y_3|+|z_3|)^{\gamma_3+1}}=:I_2,
  \end{align*}
  in both cases provided that $\beta_2+\beta_3+\gamma_2+\gamma_3<N-4$. 

The outer sum can then be estimated either by 
\begin{align*}
 &\sum_{j \colon 2^j <1/(|y_1| + |z_1|)} (2^j |y_1|)^{\beta_1+1} (2^j |z_1|)^{\gamma_1+1} I_1  \\
 &+ \sum_{j \colon 2^j \ge 1/(|y_1| + |z_1|)\ge 1/{(2|y_1|)}} (2^j |y_1|)^{\beta_1+1-N} (2^j |z_1|)^{\gamma_1+1} I_1\\
 &+ \sum_{j \colon 2^j \ge 1/(|y_1| + |z_1|)>1/{(2|z_1|)}} (2^j |y_1|)^{\beta_1+1} (2^j |z_1|)^{\gamma_1+1-N} I_1\\
 &\lesssim \frac{|y_1|^{\beta_1+1} |z_1|^{\gamma_1+1}}{(|y_1|+|z_1|)^{\beta_1+\gamma_1+2}}\frac{|y_2|^{\beta_2+1} |z_2|^{\gamma_2+1}}{(|y_2|+|z_2|)^{\beta_2+\gamma_2+2}}\frac{|y_3|^{\beta_3+1}|z_3|^{\gamma_3+1}}{[(|y_1|+|z_1|)(|y_2|+|z_2|)]^{\beta_3+\gamma_3+2}}
\end{align*}
or, if $(|y_1|+|z_1|)(|y_2|+|z_2|)\le |y_3|+|z_3|$, by 
\begin{align*}
 &\sum_{j \colon 2^j <(|y_2|+|z_2|)/(|y_3| + |z_3|)} (2^j |y_1|)^{\beta_1+1} (2^j |z_1|)^{\gamma_1+1} I_1\\
 &+ \sum_{j \colon  \frac{|y_2|+|z_2|}{|y_3|+|z_3|}\le 2^j \le \frac 1{|y_1| + |z_1|} } (2^j |y_1|)^{\beta_1+1} (2^j |z_1|)^{\gamma_1+1}  I_2\\
 &+ \sum_{j \colon 2^j > 1/(|y_1| + |z_1|)>1/{(2|z_1|)}} (2^j |y_1|)^{\beta_1+1} (2^j |z_1|)^{\gamma_1+1-N}  I_2\\
 &+  \sum_{j \colon 2^j > 1/(|y_1| + |z_1|)>1/{(2|y_1|)}} (2^j |y_1|)^{\beta_1+1-N} (2^j |z_1|)^{\gamma_1+1}  I_2 =: I+II+III+IV.
\end{align*}
It is straightforward that 
\begin{align*}
I&\sim \frac{|y_1|^{\beta_1+1} |z_1|^{\gamma_1+1}}{(|y_1|+|z_1|)^{\beta_1+\gamma_1+2}}\frac{|y_2|^{\beta_2+1} |z_2|^{\gamma_2+1}}{(|y_2|+|z_2|)^{\beta_2+\gamma_2+2}}\frac{|y_3|^{\beta_3+1}|z_3|^{\gamma_3+1}}{ (|y_3|+|z_3|)^{\beta_3+\gamma_3+2}}\\
&\hspace{3cm}\times \Big(\frac{(|y_1|+|z_1|)(|y_2|+|z_2|)}{|y_3|+|z_3|} \Big)^{\beta_1+\gamma_1+2};\\
III&\sim IV \sim  \frac{|y_1|^{\beta_1+1} |z_1|^{\gamma_1+1}}{(|y_1|+|z_1|)^{\beta_1+\gamma_1+2}}\frac{|y_2|^{\beta_2+1} |z_2|^{\gamma_2+1}}{(|y_2|+|z_2|)^{\beta_2+\gamma_2+2}}\frac{|y_3|^{\beta_3+1}|z_3|^{\gamma_3+1}}{ (|y_3|+|z_3|)^{\beta_3+\gamma_3+2}}\\
&\hspace{3cm}\times \Big(\frac{(|y_1|+|z_1|)(|y_2|+|z_2|)}{|y_3|+|z_3|} \Big)^{\beta_2+\gamma_2+2}.
\end{align*}Lastly, we have 
\begin{align*}
II &\sim \frac{|y_1|^{\beta_1+1} |z_1|^{\gamma_1+1}}{(|y_1|+|z_1|)^{\beta_1+\gamma_1+2}}\frac{|y_2|^{\beta_2+1} |z_2|^{\gamma_2+1}}{(|y_2|+|z_2|)^{\beta_2+\gamma_2+2}}\frac{|y_3|^{\beta_3+1}|z_3|^{\gamma_3+1}}{ (|y_3|+|z_3|)^{\beta_3+\gamma_3+2}}\\
&\hspace{3cm}\times \Big(\frac{(|y_1|+|z_1|)(|y_2|+|z_2|)}{|y_3|+|z_3|} \Big)^{\min\{\beta_1+\gamma_1, \beta_2+\gamma_2\}+2}L_{\beta_1, \beta_2, \gamma_1, \gamma_2}(y,z),
\end{align*}
where 
\[
L_{\beta_1, \beta_2, \gamma_1, \gamma_2}(y,z):=1+\log_+\Big(\frac{|y_3|+|z_3|}{(|y_1|+|z_1|)(|y_2|+|z_2|)} \Big)
\]
when $\beta_1+\gamma_1=\beta_2+\gamma_2$ and $L_{\beta_1, \beta_2, \gamma_1, \gamma_2}(y,z)=1$ otherwise. In conclusion, we get 
\begin{align*}
|  \partial^\beta_y \partial^\gamma_z K(y,z)|&\lesssim \frac 1{[(|y_1|+|z_1|)(|y_2|+|z_2|)+|y_3|+|z_3|]^{\beta_3+\gamma_3+4}}\\
&\qquad \times \frac 1{(|y_1|+|z_1|)^{\beta_1+\gamma_1}(|y_2|+|z_2|)^{\beta_2+\gamma_2} }\\
&\qquad \times \min\Big\{1,\Big(\frac{(|y_1|+|z_1|)(|y_2|+|z_2|)}{|y_3|+|z_3|} \Big)^{\min\{\beta_1+\gamma_1, \beta_2+\gamma_2\}} \Big\}L_{\beta_1, \beta_2, \gamma_1, \gamma_2}(y,z).
\end{align*}

\subsection*{Partial kernel estimates}
Let $m\in \mathcal M_Z^1$. We define truncations of $m$ by setting
\[
m_J:= \sum_{|j|\le J^1, |k|\le J^2}m_{j,k},\quad J=(J^1, J^2)\in \mathbb N^2.
\] 
\begin{lem}
Suppose that $m\in \mathcal M_Z^1$. Let $m_J$ be defined as above and let $K_J=\check m_J$. Then for $(y_2, z_2)\neq 0 \neq (y_3, z_3)$ we have the estimate 
\begin{align*}
&\Big| \iiint_{I^1\times I^1 \times I^1}\partial^{\beta_2}_{y_2}\partial^{\beta_3}_{y_3}\partial^{\gamma_2}_{z_2}\partial^{\gamma_3}_{z_3}K_J (x_1-y_1, y_2, y_3, x_1-z_1, z_2, z_3)\ud y_1 \ud z_1 \ud x_1 \Big|\\
&\lesssim \frac 1{(|y_2|+|z_2|)^{\beta_2+\gamma_2}}\cdot \frac 1{(|y_3|+|z_3|)^{\beta_3+\gamma_3}}|I^1| (\frac{|I^1|(|y_2|+|z_2|)}{|y_3|+|z_3|} + \frac{|y_3|+|z_3|}{|I^1|(|y_2|+|z_2|)} )^{-1}\\
&\qquad \times \frac {1}{\prod_{i=2}^3 (|y_i|+|z_i|)^2 }\cdot \big(1+\log_+ \frac{|y_3|+|z_3|}{|I^1|(|y_2|+|z_2|)}\big),
\end{align*}
where $I^1$ is an interval and $\beta_2+\beta_3+\gamma_2+\gamma_3\le 1$.
\end{lem}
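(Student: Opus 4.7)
The plan is to adapt the dyadic summation approach used earlier in this section for the full kernel. Decomposing $K_J=\sum_{|j|\le J^1,\,|k|\le J^2}K_{j,k}$ and changing variables $u_1=x_1-y_1$, $v_1=x_1-z_1$, the $x_1$-integration over $I^1$ produces a ``tent'' weight $W(u_1,v_1):=|I^1\cap(I^1+u_1)\cap(I^1+v_1)|$, bounded by $|I^1|$ and supported in $\{|u_1|,|v_1|\le|I^1|\}$. The contribution of each $K_{j,k}$ to the triple integral thus reduces to
\[
\iint \partial^{\beta_2}_{y_2}\partial^{\beta_3}_{y_3}\partial^{\gamma_2}_{z_2}\partial^{\gamma_3}_{z_3}K_{j,k}(u_1,y_2,y_3,v_1,z_2,z_3)\,W(u_1,v_1)\,du_1\,dv_1,
\]
which I estimate separately in two regimes. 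When $2^j|I^1|\le 1$, I apply the pointwise bound $\|K_{j,k}^{\beta,\gamma}\|_\infty\lesssim 2^{4j+4k}\cdot 2^{k(\beta_2+\gamma_2)+(j+k)(\beta_3+\gamma_3)}$ together with the size of $W$, producing a bound of order $|I^1|\cdot(2^j|I^1|)^2\cdot 2^{2j+4k}$ times the $(2,3)$-parameter dampening. When $2^j|I^1|>1$, the cancellations $\int K_{j,k}(u_1,\cdot)\,du_1=0=\int K_{j,k}(\cdot,v_1,\cdot)\,dv_1$ (from $m_{j,k}$ vanishing on the axes $\{\xi_1=0\}$ and $\{\eta_1=0\}$) allow one to replace the $(u_1,v_1)$-integral by its tail over $\{|u_1|>|I^1|\}\cup\{|v_1|>|I^1|\}$; the tail bound gains the factor $(2^j|I^1|)^{1-N}$ for $N$ large, yielding a bound of order $|I^1|\cdot(2^j|I^1|)^{1-N}\cdot 2^{2j+4k}$ times the same dampening.

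The $(2,3)$-parameter dampening consists of products of $\min(1,(2^k(|y_2|+|z_2|))^{-N})$ and $\min(1,(2^{j+k}(|y_3|+|z_3|))^{-N})$, so the sum over $k$ for fixed $j$ proceeds as in the full-kernel computation: split at the dominant scale and use the standard integral estimate $\int u^q\min(1,(Au)^{-2N})\min(1,(Bu)^{-2N})\,du\sim\max(A,B)^{-q-1}$ for large $N$. The outcome is the expected $(|y_2|+|z_2|)^{-2-\beta_2-\gamma_2}(|y_3|+|z_3|)^{-2-\beta_3-\gamma_3}$ size factor, multiplied by a $j$-dependent penalty that decays polynomially off the Zygmund-matched scale $2^j\sim(|y_2|+|z_2|)/(|y_3|+|z_3|)$.

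The final $j$-summation combines the two first-parameter bounds with this Zygmund penalty. The peak contribution occurs at $2^j\sim\min\bigl((|y_2|+|z_2|)/(|y_3|+|z_3|),\,1/|I^1|\bigr)$, and evaluating there gives the Zygmund decay factor
\[
\Bigl(\tfrac{|I^1|(|y_2|+|z_2|)}{|y_3|+|z_3|}+\tfrac{|y_3|+|z_3|}{|I^1|(|y_2|+|z_2|)}\Bigr)^{-1}
\]
with exponent $-1$, reflecting the single remaining free direction after the $(2,3)$-parameter sum is exhausted. In the regime $|y_3|+|z_3|>|I^1|(|y_2|+|z_2|)$, a logarithmic range of $j$ with $2^j\in[(|y_2|+|z_2|)/(|y_3|+|z_3|),\,1/|I^1|]$ contributes at essentially the same level under the no-cancellation bound, producing the factor $1+\log_+\tfrac{|y_3|+|z_3|}{|I^1|(|y_2|+|z_2|)}$; in the opposite regime the cancellation gain $(2^j|I^1|)^{1-N}$ eliminates this logarithm entirely. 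The main technical obstacle is precisely this case analysis: one has to switch between the direct and cancellation bounds exactly at the transition $2^j|I^1|\sim 1$ while balancing the Zygmund-matching constraint $2^j\sim(|y_2|+|z_2|)/(|y_3|+|z_3|)$, mirroring the mechanism producing the $L_{\beta_1,\beta_2,\gamma_1,\gamma_2}$ logarithm in the full kernel computations earlier in this section.
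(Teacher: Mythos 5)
Your approach is a genuinely different route from the paper's: rather than using the pointwise bound already established for the full kernel $K$ together with the global cancellation $\iint_{\R^2}\partial^{\beta_2}_{y_2}\partial^{\beta_3}_{y_3}\partial^{\gamma_2}_{z_2}\partial^{\gamma_3}_{z_3}K_J\,dy_1\,dz_1=0$, you go back to the dyadic pieces $K_{j,k}$ and redo the $j,k$ summation with the tent weight $W(u_1,v_1)$. However, there is a concrete error in the key cancellation step.

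You claim the \emph{single-variable} cancellations $\int K_{j,k}(u_1,\cdot)\,du_1=0=\int K_{j,k}(\cdot,v_1,\cdot)\,dv_1$, justified by ``$m_{j,k}$ vanishing on the axes $\{\xi_1=0\}$ and $\{\eta_1=0\}$.'' This is false. The factor $\phi_1(2^{-j}\xi_1,2^{-j}\eta_1)$ is supported in the \emph{joint} annulus $\tfrac12<2^{-j}|(\xi_1,\eta_1)|\le 2$, whose intersection with the axis $\{\xi_1=0\}$ is the nonempty set $\{|\eta_1|\sim 2^j\}$ (and symmetrically for $\{\eta_1=0\}$). So $m_{j,k}$ is generally nonzero on both axes, and the Fourier inversion gives $\int K_{j,k}(u_1,y_2,y_3,v_1,z_2,z_3)\,du_1\neq 0$ in general. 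The only vanishing one has is at the bi-axis $\{(\xi_1,\eta_1)=(0,0)\}$, which yields the \emph{double} cancellation $\iint_{\R^2}K_{j,k}\,du_1\,dv_1=0$ — exactly the fact the paper uses via $m_J(0,\xi_2,\xi_3,0,\eta_2,\eta_3)=0$. Because of this, your step replacing the $(u_1,v_1)$-box integral by its tail with a gain $(2^j|I^1|)^{1-N}$ in the regime $2^j|I^1|>1$ does not go through as stated. Moreover, even after switching to the correct double cancellation, one cannot simply discard $W$ and pass to a tail: the tent $W(u_1,v_1)$ is not constant on the box, so the natural replacement is to subtract $W(0,0)=|I^1|$ and use $|W-W(0,0)|\lesssim|u_1|+|v_1|$, which gains only one power $(2^j|I^1|)^{-1}$, not $N-1$ powers. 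Whether the resulting $j$-summation still closes to give exactly the stated decay factor and logarithm would then need to be re-verified; the paper sidesteps this bookkeeping entirely by working with $K_J$ directly and splitting only on the Zygmund dichotomy $|I^1|(|y_2|+|z_2|)\gtrless|y_3|+|z_3|$.
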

\begin{proof}
Since $m_J(0, \xi_2, \xi_3, 0, \eta_2, \eta_3)=0$, using the Fourier transform we know that 
\begin{equation}\label{eq:1204}
\iint_{\R^2 } \partial^{\beta_2}_{y_2}\partial^{\beta_3}_{y_3}\partial^{\gamma_2}_{z_2}\partial^{\gamma_3}_{z_3}K_J (y_1, y_2, y_3, z_1, z_2, z_3)\ud y_1 \ud z_1=0.
\end{equation}
Suppose first that $|I^1|(|y_2|+|z_2|)\ge |y_3|+|z_3|$ -- by \eqref{eq:1204} we may equivalently estimate the integral over $I^1\times (\R^2 \setminus (I^1\times I^1))$ instead of $I^1\times I^1 \times I^1$. By the kernel estimates we have 
\begin{align*}
&\Big| \iiint_{I^1\times (\R^2 \setminus (I^1 \times I^1))}\partial^{\beta_2}_{y_2}\partial^{\beta_3}_{y_3}\partial^{\gamma_2}_{z_2}\partial^{\gamma_3}_{z_3}K_J (x_1-y_1, y_2, y_3, x_1-z_1, z_2, z_3)\ud y_1 \ud z_1 \ud x_1 \Big|\\
&\lesssim \int_{I^1}\iint_{\R^2 \setminus (I^1 \times I^1)}\frac 1{(|y_2|+|z_2|)^{ \beta_2+\gamma_2}} \\
&\hspace{1cm}\times \frac {1+\log_+ \frac{|y_3|+|z_3|}{(|x_1-y_1|+|x_1-z_1|)(|y_2|+|z_2|)}}{[(|x_1-y_1|+|x_1-z_1|)(|y_2|+|z_2|)+|y_3|+|z_3|]^{\beta_3+\gamma_3+4}}\ud y_1 \ud z_1 \ud x_1.
\end{align*}
Note that we have either $y_1\in \R\setminus I^1$ or $z_1\in \R\setminus I^1$, and we may without loss of generality assume $y_1\in \R\setminus I^1$. Then the integral is dominated by 
\begin{align*}
&\int_{I^1}\iint_{(\R \setminus  I^1)\times \R}\frac 1{(|y_2|+|z_2|)^{ \beta_2+\gamma_2}} \\
&\hspace{1cm}\times \frac {1+\log_+ \frac{|y_3|+|z_3|}{|x_1-y_1|(|y_2|+|z_2|)}}{[(|x_1-y_1|+|x_1-z_1|)(|y_2|+|z_2|)+|y_3|+|z_3|]^{\beta_3+\gamma_3+4}}\ud y_1 \ud z_1 \ud x_1\\
&\hspace{1cm}\lesssim \frac 1{(|y_2|+|z_2|)^{ \beta_2+\gamma_2+\beta_3+\gamma_3+4}}
\int_{I^1} \int_{\R \setminus I^1} \frac  {1+\log_+ \frac{|y_3|+|z_3|}{|x_1-y_1|(|y_2|+|z_2|)}} {\big(|x_1-y_1|+ \frac{|y_3|+|z_3|}{|y_2|+|z_2|}\big)^{\beta_3+\gamma_3+3}}
\ud y_1 \ud x_1.
\end{align*}
Let $t:=  \frac{|y_3|+|z_3|}{|y_2|+|z_2|}$. By a change of variables we reduce to 
\begin{align*}
& \frac {t^{-\beta_3-\gamma_3-1}}{(|y_2|+|z_2|)^{ \beta_2+\gamma_2+\beta_3+\gamma_3+4}}\iint_{t^{-1} I^1\times (\R \setminus t^{-1}I^1) } \frac  {1+\log_+ \frac{1}{|x_1-y_1|}} {\big(|x_1-y_1|+ 1\big)^{\beta_3+\gamma_3+3}}
\ud y_1 \ud x_1\\
&\lesssim  \frac {t^{-\beta_3-\gamma_3-1}}{(|y_2|+|z_2|)^{ \beta_2+\gamma_2+\beta_3+\gamma_3+4}}
\int_{t^{-1}I^1}\frac 1{\big(d(x_1, \partial(t^{-1}I^1))+1\big)^{\beta_3+\gamma_3+2}}\ud x_1\\
&\lesssim  \frac {t^{-\beta_3-\gamma_3-1}}{(|y_2|+|z_2|)^{ \beta_2+\gamma_2+\beta_3+\gamma_3+4}}\\
&= \frac 1{(|y_2|+|z_2|)^{\beta_2+\gamma_2+3}}\frac 1{(|y_3|+|z_3|)^{\beta_3+\gamma_3+1}}\\
&\sim  \frac 1{(|y_2|+|z_2|)^{\beta_2+\gamma_2}}\cdot \frac 1{(|y_3|+|z_3|)^{\beta_3+\gamma_3}}|I^1| (\frac{|I^1|(|y_2|+|z_2|)}{|y_3|+|z_3|} + \frac{|y_3|+|z_3|}{|I^1|(|y_2|+|z_2|)} )^{-1}\\
&\qquad \times \frac {1}{\prod_{i=2}^3 (|y_i|+|z_i|)^2 }.
\end{align*}

Assume then that $|I^1|(|y_2|+|z_2|)< |y_3|+|z_3|$. This time we integrate over $I^1\times I^1\times I^1$. Proceeding as above we reduce to the integral  
\begin{align*}
&\iiint\limits_{t^{-1}I^1 \times t^{-1}I^1\times t^{-1}I^1}\frac {t^{-\beta_3-\gamma_3-1}}{(|y_2|+|z_2|)^{ \beta_2+\gamma_2+\beta_3+\gamma_3+4 }} \frac {1+\log_+ \frac{1}{(|x_1-y_1|+|x_1-z_1|)}}{[(|x_1-y_1|+|x_1-z_1|) +1]^{\beta_3+\gamma_3+4}}\ud y_1 \ud z_1 \ud x_1 \\
&\le \iint\limits_{  t^{-1}I^1\times t^{-1}I^1}\frac {t^{-\beta_3-\gamma_3-1}}{(|y_2|+|z_2|)^{ \beta_2+\gamma_2+\beta_3+\gamma_3+4 }} \frac {1+\log_+ \frac{1}{ |x_1-y_1| }}{( |x_1-y_1| +1)^{\beta_3+\gamma_3+3}}\ud y_1  \ud x_1 \\
&\sim \frac {t^{-\beta_3-\gamma_3-1}}{(|y_2|+|z_2|)^{ \beta_2+\gamma_2+\beta_3+\gamma_3+4 }}\iint\limits_{  t^{-1}I^1\times t^{-1}I^1}  \big( 1+\log_+ \frac{1}{ |x_1-y_1| }\big)\ud y_1  \ud x_1\\
&\lesssim \frac {t^{-\beta_3-\gamma_3-1}}{(|y_2|+|z_2|)^{ \beta_2+\gamma_2+\beta_3+\gamma_3+4 }} (t^{-1}|I^1|)^2 (1+\log_+ (t|I^1|^{-1}))\\
&= \frac 1{(|y_2|+|z_2|)^{\beta_2+\gamma_2}}\cdot \frac 1{(|y_3|+|z_3|)^{\beta_3+\gamma_3}}|I^1| (\frac{|I^1|(|y_2|+|z_2|)}{|y_3|+|z_3|} + \frac{|y_3|+|z_3|}{|I^1|(|y_2|+|z_2|)} )^{-1}\\
&\qquad \times \frac {1}{\prod_{i=2}^3 (|y_i|+|z_i|)^2 }\cdot \big(1+\log_+ \frac{|y_3|+|z_3|}{|I^1|(|y_2|+|z_2|)}\big).
\end{align*}
Thus, we are done.
\end{proof}
With \eqref{eq:blinmulti} at hand, similarly as in the linear case we can derive the following. 
\begin{lem}\label{lem:par}
Let $m \in \calM^1_Z$ and 
denote by $T_{m}$ the corresponding Fourier multiplier operator.

Let $f_1,g_1  \in L^4(\R)$, $f_{2,3}, g_{2,3} \in L^4(\R^2)$ and $h_1\in L^2(\R)$, $h_{2,3}\in L^2(\R^2)$.
Then
$$
\langle T_{m} (f_1 \otimes f_{2,3}, g_1 \otimes g_{2,3}), h_1\otimes h_{2,3} \rangle
= \ave{T_{m_{f_{2,3}, g_{2,3}, h_{2,3}}} (f_1,g_1), h_1},
$$ 
where $m_{ f_{2,3}, g_{2,3}, h_{2,3}}$ is a standard bilinear Coifman-Meyer multiplier in $\R$ satisfying the estimates
\begin{align*}
  |(\ud/\ud \xi_1)^{\alpha}(\ud/\ud \eta_1)^{\beta} &m_{f_{2,3}, g_{2,3}, h_{2,3}}(\xi_1,\eta_1)| \\
&\lesssim \| m \|_{\calM_Z^1}\|f_{2,3}\|_{L^4} \| g_{2,3}\|_{L^4} \|h_{2,3}\|_{L^2} (|\xi_1|+|\eta_1|)^{-\alpha-\beta}.
\end{align*}
Thus,  $T_{m_{f_{2,3}, g_{2,3}, h_{2,3}}}$ is a convolution form bilinear Calder\'on-Zygmund operator.
In particular, there exists a standard bilinear Calder\'on-Zygmund kernel $K_{m, f_{2,3}, g_{2,3}, h_{2,3}}$ 
such that  \[\|K_{m, f_{2,3}, g_{2,3}, h_{2,3}}\|_{CZ_1(\R^2)} \lesssim \|f_{2,3}\|_{L^4} \| g_{2,3}\|_{L^4}\|h_{2,3}\|_{L^2}.\] Moreover, if
$\supp f_1 \cap \supp g_1 \cap \supp h_1=\emptyset$, then
\begin{align*}
  \langle T_{m} (f_1 \otimes f_{2,3}, g_1 &\otimes g_{2,3}), h_1\otimes h_{2,3} \rangle \\
&= \iiint K_{m, f_{2,3}, g_{2,3}, h_{2,3}}(x_1,y_1, z_1) f_1(y_1) g_1(z_1) h_1(x_1)\ud y_1 \ud z_1\ud x_1.
\end{align*}
\end{lem}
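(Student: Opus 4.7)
The plan is to identify the symbol $m_{f_{2,3},g_{2,3},h_{2,3}}$ explicitly on the Fourier side, verify by a rescaling argument that it is a classical bilinear Coifman--Meyer symbol on $\R$ with constant controlled by $\|m\|_{\mathcal M_Z^1}\|f_{2,3}\|_{L^4}\|g_{2,3}\|_{L^4}\|h_{2,3}\|_{L^2}$, and then invoke the standard bilinear Coifman--Meyer $\Rightarrow$ Calder\'on--Zygmund kernel implication. First, reducing (by density) to Schwartz functions and applying Parseval/Fubini in the $(\xi_{2,3},\eta_{2,3})$ variables gives
\[
\langle T_m(f_1\otimes f_{2,3},g_1\otimes g_{2,3}),h_1\otimes h_{2,3}\rangle=\langle T_{m_{f_{2,3},g_{2,3},h_{2,3}}}(f_1,g_1),h_1\rangle,
\]
where the symbol is defined by
\[
m_{f_{2,3},g_{2,3},h_{2,3}}(\xi_1,\eta_1):=\iint m(\xi_1,\xi_{2,3},\eta_1,\eta_{2,3})\widehat{f_{2,3}}(\xi_{2,3})\widehat{g_{2,3}}(\eta_{2,3})\overline{\widehat{h_{2,3}}(\xi_{2,3}+\eta_{2,3})}\,d\xi_{2,3}\,d\eta_{2,3}.
\]
Writing $m^{\xi_1,\eta_1}(\xi_{2,3},\eta_{2,3}):=m(\xi_1,\xi_{2,3},\eta_1,\eta_{2,3})$, I recognize this as $\langle T_{m^{\xi_1,\eta_1}}(f_{2,3},g_{2,3}),h_{2,3}\rangle$, a trilinear pairing associated with a bilinear multiplier on $\R^2$.

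The core step is bounding $\partial_{\xi_1}^{\alpha}\partial_{\eta_1}^{\beta}m_{f_{2,3},g_{2,3},h_{2,3}}$. Differentiating under the integral and applying \eqref{eq:blinmulti} with $s:=|\xi_1|+|\eta_1|$ (and multi-indices having only $\alpha_1,\beta_1$ contributions in the $\zeta_1,\sigma_1$ slots), I obtain, uniformly in $(\xi_1,\eta_1)$,
\[
|\partial_{\xi_{2,3}}^{\alpha'}\partial_{\eta_{2,3}}^{\beta'}\partial_{\xi_1}^{\alpha}\partial_{\eta_1}^{\beta}m^{\xi_1,\eta_1}|\lesssim \|m\|_{\mathcal M_Z^1}s^{-\alpha-\beta+\alpha_2'+\beta_2'}\big(|(s\xi_2,\xi_3)|+|(s\eta_2,\eta_3)|\big)^{-|\alpha'|-|\beta'|}.
\]
The anisotropic change of variables $\tilde\xi_2=s\xi_2,\tilde\xi_3=\xi_3$ (and similarly for $\eta$) turns this into an honest bilinear Coifman--Meyer symbol $\tilde m$ on $\R^2\times\R^2$ with
\[
|\partial_{\tilde\xi_{2,3}}^{\alpha'}\partial_{\tilde\eta_{2,3}}^{\beta'}\tilde m|\lesssim \|m\|_{\mathcal M_Z^1}s^{-\alpha-\beta}(|\tilde\xi_{2,3}|+|\tilde\eta_{2,3}|)^{-|\alpha'|-|\beta'|}.
\]
The classical bilinear Coifman--Meyer theorem on $\R^2$ (e.g.\ \cite{GT}) gives $\|T_{\tilde m}\|_{L^4\times L^4\to L^2}\lesssim \|m\|_{\mathcal M_Z^1}s^{-\alpha-\beta}$; since the underlying dilation is (up to a universal constant) an isomorphism for $L^4\times L^4\to L^2$ by Lebesgue homogeneity ($\tfrac14+\tfrac14=\tfrac12$), the same bound transfers to $T_{\partial_{\xi_1}^{\alpha}\partial_{\eta_1}^{\beta}m^{\xi_1,\eta_1}}$. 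Testing against $(f_{2,3},g_{2,3},h_{2,3})$ then produces exactly the claimed pointwise Coifman--Meyer estimate on $m_{f_{2,3},g_{2,3},h_{2,3}}$.

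With Coifman--Meyer bounds for $m_{f_{2,3},g_{2,3},h_{2,3}}$ (with constant $\|m\|_{\mathcal M_Z^1}\|f_{2,3}\|_{L^4}\|g_{2,3}\|_{L^4}\|h_{2,3}\|_{L^2}$) in hand, the fact that $T_{m_{f_{2,3},g_{2,3},h_{2,3}}}$ is a bilinear Calder\'on--Zygmund operator on $\R$ with kernel $K_{m,f_{2,3},g_{2,3},h_{2,3}}$ satisfying $\|K_{m,f_{2,3},g_{2,3},h_{2,3}}\|_{\operatorname{CZ}_1(\R^2)}\lesssim \|f_{2,3}\|_{L^4}\|g_{2,3}\|_{L^4}\|h_{2,3}\|_{L^2}$ is the standard Coifman--Meyer $\Rightarrow$ CZ kernel implication; the off-diagonal integral representation under $\supp f_1\cap\supp g_1\cap\supp h_1=\emptyset$ is then immediate from general bilinear CZ theory. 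I expect the main obstacle to be a clean implementation of the rescaling step: careful bookkeeping of the Jacobians and of the Fourier-side versus physical-side dilations is needed to see that $T_{m^{\xi_1,\eta_1}}$ and $T_{\tilde m}$ really have comparable $L^4\times L^4\to L^2$ norms. Everything else is either a direct substitution or a standard appeal to \cite{GT}.
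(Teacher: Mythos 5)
Your argument is correct and is essentially the route the paper intends: after factoring the symbol as $m_{f_{2,3},g_{2,3},h_{2,3}}(\xi_1,\eta_1)=\langle T_{m^{\xi_1,\eta_1}}(f_{2,3},g_{2,3}),h_{2,3}\rangle$, you apply \eqref{eq:blinmulti} with $s=|\xi_1|+|\eta_1|$, absorb the anisotropic scale via $\tilde\xi_2=s\xi_2,\tilde\xi_3=\xi_3$, and use the $L^4\times L^4\to L^2$ scale-invariance (since $\tfrac14+\tfrac14+\tfrac12=1$) to read off the Coifman--Meyer bound with constant $\lesssim\|m\|_{\calM_Z^1}(|\xi_1|+|\eta_1|)^{-\alpha-\beta}$. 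The paper itself leaves the proof implicit, noting only that it follows from \eqref{eq:blinmulti} ``similarly as in the linear case,'' and your filled-in argument is the natural implementation of that remark.
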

\subsection*{Conclusion}
Notice that Lemma \ref{lem:par} immediately implies that we have the weak boundedness property.
Therefore, the bilinear multipliers satisfy  Definition \ref{defn:czz}.

\bibliography{refH}

\end{document}